\newcommand{\supp}{\operatorname{supp}}
\newcommand{\Span}{\operatorname{span}}
\newcommand{\Spec}{\operatorname{Spec}}
\newcommand{\conv}{\operatorname{conv}}
\newcommand{\overlap}{\operatorname{overlap}}
\begin{document}
\newtheorem{theorem}{Theorem}[section]
\newtheorem{lemma}[theorem]{Lemma}
\newtheorem{definition}[theorem]{Definition}
\newtheorem{claim}[theorem]{Claim}
\newtheorem{example}[theorem]{Example}
\newtheorem{remark}[theorem]{Remark}
\newtheorem{proposition}[theorem]{Proposition}
\newtheorem{corollary}[theorem]{Corollary}
\newtheorem{observation}[theorem]{Observation}
\newcommand{\subscript}[2]{$#1 _ #2$}
\newtheorem*{theorem*}{Theorem}
\author{
Izhar Oppenheim
\footnote{Department of Mathematics, Ben-Gurion University of the Negev, Be'er Sheva 84105, Israel, izharo@bgu.ac.il }
}

%\author{Izhar Oppenheim}
%\address{Department of Mathematics, Ben-Gurion University of the Negev, Be'er Sheva 84105, Israel}
%\email{izharo@bgu.ac.il}
%%\affil{Department of Mathematics, Ben-Gurion University of the Negev, Be'er Sheva 84105, Israel, izharo@bgu.ac.il}
\title{Local Spectral Expansion Approach to High Dimensional Expanders Part II: Mixing and Geometrical overlapping}
\maketitle

\begin{abstract}
In this paper, we further explore the local-to-global approach for expansion of simplicial complexes that we call local spectral expansion. 
Specifically, we prove that local expansion in the links imply the global expansion phenomena of mixing and geometric overlapping. Our mixing results also give tighter bounds on the error terms compared to previously known results.
\end{abstract}
\textbf{Keywords}. Mixing, Simplicial complex, High dimensional expanders, geometrical overlapping.

\section{Introduction}

This article explores local-to-global expansion behaviour in simplicial complexes. In \cite{LocSpecI}, we showed how local spectral gaps in $1$-dimensional links of a complex implies spectral gaps in the links of of every dimension (including a spectral gap in the $1$-skeleton of the complex). In this article, we show how spectral gaps in the links of a complex imply mixing and topological overlapping, which are global expansion phenomena.  

A pure $n$-dimensional simplicial complex $X$ is a simplicial complex in which every simplex is contained in an $n$-dimensional simplex. The sets with $i+1$ elements are denoted $X(i)$, $0 \leq i \leq n$. The {\em one-skeleton} of the complex $X$ is its underlying graph obtained by $X(0) \cup X(1)$. The {\em link} of $\tau \in X(i)$ denoted $X_{\tau}$ is the complex obtained by taking all faces in $X$ that contain $\tau$ and removing $\tau$ from them. Thus, if $\tau$ is of dimension $i$, then $X_{\tau}$ is of dimension $n-i-1$.

For every $-1 \leq i \leq n-2$, the one skeleton of $X_{\tau}$ is a graph. Below we will consider a weighted random walk on the one skeleton of $X_\tau$ and its spectrum. We do not specify the weights on the links in the introduction in order to ease the reading, and exact definitions are given in the body of the article. We show that bounding the spectra in all the links yield the following mixing results:
 
\begin{theorem}[Mixing for general complexes - informal]
\label{mixing thm for general complexes - intro}
Let $X$ be a pure $n$-dimensional simplicial complex with connected links. Assume that $X$ is $K$-regular in the following sense: for every $\lbrace v \rbrace \in X(0)$, $v$ is contained in exactly $K$ $n$-dimensional simplices of $X$. Assume that there is a constant $0< \lambda < 1$ such that for every $\tau \in \bigcup_{k=-1}^{n-2} X(k)$ the spectrum of the weighted random walk on $X_\tau$ is contained in $[-\lambda, \lambda] \cup \lbrace 1 \rbrace$, then for every pairwise disjoint sets $U_0,...,U_n \subseteq X(0)$,
$$\left\vert \vert X(U_0,...,U_n) \vert - \dfrac{n! K}{\vert X(0) \vert^n} \vert U_0 \vert \vert U_1 \vert ... \vert U_n \vert \right\vert \leq C_n n! \lambda K  \min_{0 \leq i < j \leq n} \sqrt{ \vert U_i \vert \vert U_j \vert},$$
where $\vert X(U_0,...,U_n) \vert$ is the number of $n$-dimensional simplices that have exactly one vertex in each of the sets $U_0,...,U_n$ and $C_n$ is a constant given by the formula:
$$C_n = \sum_{k=0}^{n-1} ((k+1)(k+2)^{n-k} - (k+1)^{n-k+1} ).$$ 
\end{theorem}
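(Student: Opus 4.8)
The plan is to rewrite the inequality as an estimate for a single expectation and then prove that estimate by induction on the dimension, at each step localizing to a link and invoking the spectral gap of its one-skeleton. First I would recall the balanced weighting used in the body of the paper: the measure $m_n$ on $X(n)$ is uniform, the induced vertex weight $m_0$ on $X(0)$ is uniform because $X$ is $K$-regular, and for a face $\tau$ the conditional law of a vertex completing $\tau$ to a simplex one dimension up is the vertex weight $m_0^{X_\tau}$ of the link. Since $U_0,\dots,U_n$ are pairwise disjoint, an ordered $n$-simplex $(\sigma_0,\dots,\sigma_n)$ satisfies $\sigma_i\in U_i$ for every $i$ exactly when its support is one of the simplices counted by $X(U_0,\dots,U_n)$, and then the admissible ordering is unique; together with $|X(n)|=|X(0)|K/(n+1)$ this gives
$$\Pr_{\sigma\sim m_n}\big[\sigma_i\in U_i\ \text{for all }i\big]=\frac{|X(U_0,\dots,U_n)|}{n!\,|X(0)|\,K}.$$
Writing $\mu_i=|U_i|/|X(0)|=\langle\mathbbm{1}_{U_i}\rangle$, the theorem becomes equivalent to
$$\Big|\ \mathbb{E}_{\sigma\sim m_n}\big[\textstyle\prod_{i=0}^{n}\mathbbm{1}_{U_i}(\sigma_i)\big]-\prod_{i=0}^n\mu_i\ \Big|\ \le\ C_n\,\lambda\,\min_{0\le i<j\le n}\sqrt{\mu_i\mu_j},$$
and this is what I would prove.

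The engine is the following recursive estimate, proved by induction on the dimension $d$ of a pure complex $Y$ with connected links satisfying the same local spectral hypothesis: for all $f_0,\dots,f_d:Y(0)\to[0,1]$ and \emph{every} pair $p\ne q$ in $\{0,\dots,d\}$,
$$\Big|\ \mathbb{E}_{\sigma\sim m_d}\big[\textstyle\prod_{i=0}^d f_i(\sigma_i)\big]-\prod_{i=0}^d\langle f_i\rangle_Y\ \Big|\ \le\ C_d\,\lambda\,\|f_p\|_2\,\|f_q\|_2,$$
with averages and $L^2$-norms taken with respect to $m_0^Y$. Applied to $Y=X$, $f_i=\mathbbm{1}_{U_i}$, using $\|\mathbbm{1}_{U_i}\|_2=\sqrt{\mu_i}$ and minimizing over pairs $p<q$, this gives the reduced statement. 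The base case $d=1$ is precisely the expander mixing lemma for the one-skeleton of $Y$, whose random walk has spectrum in $[-\lambda,\lambda]\cup\{1\}$ by the hypothesis at $\tau=\emptyset$ together with connectedness; one checks $C_1=1$. For the inductive step ($d\ge 2$) I would fix a coordinate $r\notin\{p,q\}$ (which exists since there are $d+1\ge 3$ of them), isolate the vertex $v=\sigma_r$, and use that the conditional law of the remaining coordinates is the balanced distribution on the link $Y_v$ (of dimension $d-1$):
$$\mathbb{E}_{\sigma\sim m_d}\big[\textstyle\prod_i f_i(\sigma_i)\big]=\mathbb{E}_{v\sim m_0^Y}\Big[f_r(v)\cdot\mathbb{E}_{\sigma'\sim m_{d-1}^{Y_v}}\big[\textstyle\prod_{i\ne r}f_i(\sigma'_i)\big]\Big].$$
The inductive hypothesis inside each $Y_v$, still with the pair $\{p,q\}$, replaces the inner expectation by $\prod_{i\ne r}\langle f_i\rangle_{Y_v}$ up to an error bounded by $C_{d-1}\lambda\|f_p|_{Y_v}\|_2\|f_q|_{Y_v}\|_2$; averaging this against $f_r$ and using Cauchy--Schwarz together with the weight consistency $\mathbb{E}_{v}\|f_p|_{Y_v}\|_2^2=\|f_p\|_2^2$ bounds the contribution by $C_{d-1}\lambda\|f_p\|_2\|f_q\|_2$.

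It remains to treat $\mathbb{E}_{v\sim m_0^Y}\big[f_r(v)\prod_{i\ne r}\langle f_i\rangle_{Y_v}\big]$, which is exactly $\langle f_r,\prod_{i\ne r}Af_i\rangle_{m_0^Y}$, where $A$ is the weighted random walk on the one-skeleton of $Y$, because $v\mapsto\langle h\rangle_{Y_v}=(Ah)(v)$. Writing $f_i=\langle f_i\rangle_Y+g_i$ and $Af_i=\langle f_i\rangle_Y+Ag_i$ and expanding the product, the term with no $g$-factor contributes precisely $\prod_i\langle f_i\rangle_Y$, the desired main term. Every other term contains at least one factor $Ag_i$, and since $A$ contracts $m_0^Y$-mean-zero functions by $\lambda$ (this is the spectral hypothesis at $\tau=\emptyset$, using connectedness), one has $\|Ag_i\|_2\le\lambda\|f_i\|_2$; combined with Cauchy--Schwarz, $\|f_i\|_\infty\le1$ and $\langle f_i\rangle_Y\le1$, each such term is at most a constant depending only on the index subset times $\lambda\|f_p\|_2\|f_q\|_2$ — the point being that whichever of $p,q$ does \emph{not} supply a $g$-factor instead supplies its weight $\langle f_\bullet\rangle_Y=\|f_\bullet\|_2^2\le\|f_\bullet\|_2$ from the ``no-$g$'' part of the product, so the estimate is always in terms of the prescribed pair. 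Summing these contributions and adding the $C_{d-1}$ from the previous paragraph produces the recursion $C_d=C_{d-1}+(\text{sum over nonempty subsets of a }d\text{-set of their term-constants})$, which one then verifies coincides with the closed form $C_d=\sum_{k=0}^{d-1}\big((k+1)(k+2)^{d-k}-(k+1)^{d-k+1}\big)$ (consistent with $C_1=1$, $C_2=5$, $C_3=20$).

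I expect the genuine obstacle to be this last piece of bookkeeping. After descending into a link $Y_v$ the function $g_i|_{Y_v}$ is no longer $m_0^{Y_v}$-mean zero, so at every level each surviving function must be re-centered before $A$ can be applied again, and the recursion therefore branches into a whole subtree of remainder terms. Bounding each of them at the correct order in $\lambda$ is routine, but counting their multiplicities precisely enough to land on the exact constant $C_n$, rather than a crude $n^{O(n)}$ bound, is delicate; that is exactly where the somewhat intricate closed form for $C_n$ comes from, and it is the part of the argument that would require the most care.
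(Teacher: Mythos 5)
Your proposal is correct in outline but takes a genuinely different route from the paper. The paper never conditions on a single vertex and never recurses into a link of a vertex; instead it works entirely with the signless differentials $d_k, d_k^*$ and telescopes "horizontally" across the simplex dimension $k$: it writes $m(X(U_0,\dots,U_n))$ as $\langle d_{n-1}^*d_{n-1}\chi_{X(U_0,\dots,U_{n-1})},\chi_{X(U_1,\dots,U_n)}\rangle$, expresses the product term $\prod m(U_i)/m(X(0))^n$ via $\prod P_{X(U_i)}d_{-1}d_{-1}^*$, and interpolates between them by a telescoping sum of inner products of the form $\left\langle\bigl(\prod P\,d_k^*d_k-\prod P\,d_{k-1}d_{k-1}^*\bigr)\chi,\chi\right\rangle$; each such summand is controlled by a Garland-type localization (Proposition \ref{M^+ - M^- proposition}) together with the norm identities $\Vert d_k^*d_k\Vert=k+2$, $\Vert d_{k-1}d_{k-1}^*\Vert=k+1$, and the commutation lemmas for the projections $P_{X(U_{i},\dots,U_{k+i})}$ (Lemmas \ref{P d^* d P lemma}--\ref{product lemma}). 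Your argument instead conditions on one ordered coordinate $\sigma_r$, applies the inductive hypothesis inside each link $Y_v$, and treats the leftover $\langle f_r,\prod_{i\ne r}Af_i\rangle$ by a direct mean-plus-fluctuation expansion in the one-skeleton. Both are Garland-style local-to-global arguments, but yours is a "vertical" recursion on links while the paper's is an algebraic telescope over $k$; yours is arguably more self-contained (you only ever need the one-skeleton random walk operator, not the whole $d_k,d_k^*$ formalism) but it requires the $L^\infty$ control $\Vert f_i\Vert_\infty\le 1$ at every step, which the paper's operator-norm bounds sidestep.

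Two remarks on the places you flag as delicate. First, the "re-centering in links" worry is a red herring in your formulation: you apply the inductive hypothesis to $f_i\vert_{Y_v}$ as a black box, so the fluctuation is recomputed internally at each level and nothing ever needs to be carried down as a mean-zero function. Second, the bookkeeping is actually simpler than you fear, and does \emph{not} reproduce the paper's constant. With $f_i:Y(0)\to[0,1]$ one has $\Vert f_r\Vert_\infty\le 1$, $\Vert Ag_i\Vert_\infty\le\Vert g_i\Vert_\infty\le 1$, $\langle f_i\rangle\le\Vert f_i\Vert_2\le 1$, and $\Vert Ag_i\Vert_2\le\lambda\Vert f_i\Vert_2$; a single Cauchy--Schwarz per nonempty $S\subseteq\{0,\dots,d\}\setminus\{r\}$ (peeling off $Ag_p$, or $Ag_q$, or whichever $Ag_j$ exists) shows that every such term is at most $\lambda\Vert f_p\Vert_2\Vert f_q\Vert_2$. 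That yields the clean recursion $C_d'=C_{d-1}'+(2^d-1)$ with $C_1'=1$, i.e. $C_2'=4$, $C_3'=11$, $C_4'=26$, which is strictly smaller than the paper's $C_2=5$, $C_3=20$, $C_4=78$. So your claim that the recursion "coincides with the closed form" is not right, but the discrepancy is harmless here: a smaller constant still establishes the stated inequality, and in fact your route gives a sharper bound than the paper's. If you wanted to match the paper's $C_n$ exactly you would have to weaken your estimates on purpose, which there is no reason to do.
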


An $n$-dimensional simplicial complex is called $(n+1)$-partite if there are sets $S_0,...,S_n \subseteq X(0)$, called the sides of $X$, such that every $n$-dimensional simplex of $X$ has exactly one vertex in each of the sets $S_0,...,S_n$ (this is the high dimensional analogue to a bipartite graph). In the partite case, mixing can be deduced only from one sided spectral gap (as in the case of partite graphs):

\begin{theorem}[Mixing for partite complexes - informal]
\label{mixing thm for partite complexes - intro}
Let $X$ be a pure $n$-dimensional simplicial complex with connected links. Assume that $X$ is $(n+1)$-partite with sides $S_0,...,S_n$. Assume also that $X$ is partite-regular in the following sense: for every $0 \leq i \leq n$ there is a constant $K_i$ such that for every $\lbrace v \rbrace \in S_i$, $v$ is contained in exactly $K_i$ $n$-dimensional simplices of $X$. Assume that there is a constant $0< \lambda < 1$ such that for every $\tau \in \bigcup_{k=-1}^{n-2} X(k)$ the spectrum of the weighted random walk on $X_\tau$ is contained in $[-1, \lambda] \cup \lbrace 1 \rbrace$, then for every sets $U_0 \subseteq S_0,...,U_n \subseteq S_n$,
$$\left\vert \dfrac{\vert X(U_0,...,U_n) \vert}{\vert X(n) \vert} - \dfrac{\vert U_0 \vert ... \vert U_n \vert}{\vert S_0 \vert ... \vert S_n \vert} \right\vert \leq C_n \lambda \min_{0 \leq i < j \leq n} \sqrt{\dfrac{\vert U_i \vert \vert U_j \vert}{\vert S_i \vert \vert S_j \vert}},$$
where 
$$C_n = \sum_{k=0}^{n-1} n! \dfrac{(n+1-k)^{n-k}}{(n-k-1)!}((k+1)(k+2)^{n-k} - (k+1)^{n-k+1} ).$$ 
\end{theorem}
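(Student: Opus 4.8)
The plan is to rewrite the count as a centered multilinear form on $X(n)$ and bound its off-diagonal part through the links. Normalize $X$ by giving $X(n)$ the uniform probability measure and every lower face the induced weight; partite-regularity then forces the consistency relations $|X(n)|=K_i|S_i|$ for all $i$ and makes the induced measure on each side $S_i$ uniform. Writing $\sigma_i$ for the unique vertex of an $n$-simplex $\sigma$ in $S_i$, this gives $\frac{|X(U_0,\dots,U_n)|}{|X(n)|}=\mathbb{E}_{\sigma\in X(n)}\big[\prod_{i=0}^n\mathbbm{1}_{U_i}(\sigma_i)\big]$ and $\frac{|U_i|}{|S_i|}=\mathbb{E}_{v\in S_i}[\mathbbm{1}_{U_i}(v)]$. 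Decompose $\mathbbm{1}_{U_i}=\alpha_i+g_i$ with $\alpha_i=|U_i|/|S_i|$ and $\mathbb{E}[g_i]=0$, so $\|g_i\|_\infty\le1$ and $\|g_i\|_2^2=\alpha_i(1-\alpha_i)\le\alpha_i$. Expanding $\prod_i\mathbbm{1}_{U_i}(\sigma_i)$ multilinearly, the empty term reproduces the claimed main term $\prod_i\alpha_i$, every term indexed by a single side vanishes since $g_i$ is centered on $S_i$ (here using that $\sigma_i$ is uniform on $S_i$), and the error to bound is $\sum_{A\subseteq\{0,\dots,n\},\ |A|\ge2}\big(\prod_{i\notin A}\alpha_i\big)E_A$ with $E_A:=\mathbb{E}_\sigma\big[\prod_{i\in A}g_i(\sigma_i)\big]$.

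The core is to prove, for every $A$ with $|A|\ge2$ and every pair $p\neq q$ in $A$, an estimate $|E_A|\le c_{n,A}\,\lambda\,\|g_p\|_2\|g_q\|_2$ with explicit constants $c_{n,A}$; together with $\|g_i\|_\infty\le1$ and $\|g_i\|_2^2\le\alpha_i$ this yields $|E_A|\le c_{n,A}\,\lambda\sqrt{\alpha_p\alpha_q}$. I would obtain this by a Garland-type descent through the links. Fixing $p,q$ and letting $\tau$ range over the $(n-2)$-faces carrying one vertex in each side other than $S_p,S_q$, the link $X_\tau$ is a connected bipartite graph between subsets of $S_p$ and $S_q$, and $E_A=\mathbb{E}_\tau\big[h(\tau)\,\mathbb{E}_{\sigma\supseteq\tau}[g_p(\sigma_p)g_q(\sigma_q)]\big]$ where $h(\tau)=\prod_{i\in A\setminus\{p,q\}}g_i(\tau_i)$ has $\|h\|_\infty\le1$. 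Splitting $g_p,g_q$ in $X_\tau$ into link-averages plus link-centered parts, the inner expectation equals (link-average of $g_p$)$\cdot$(link-average of $g_q$) plus a remainder which the one-sided gap hypothesis $\Spec\subseteq[-1,\lambda]\cup\{1\}$ bounds by $\lambda$ times the two link $L^2$-norms --- the eigenvalue near $-1$ being harmless precisely because $X_\tau$ is bipartite, as in the bipartite expander mixing lemma. Averaging the remainder over $\tau$ by Cauchy--Schwarz and the tower identity $\mathbb{E}_\tau\|g_p|_{X_\tau}\|_2^2=\|g_p\|_2^2$ contributes $\le\lambda\|g_p\|_2\|g_q\|_2$. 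The surviving main term $\mathbb{E}_\tau\big[h(\tau)\cdot(\text{link-average of }g_p)(\text{link-average of }g_q)\big]$ is, by Cauchy--Schwarz, at most the product of the $L^2$-norms of the two link-average functions, and each such norm equals $\langle g_p,W_pg_p\rangle^{1/2}$ for an induced higher-order walk $W_p$ on $S_p$ (obtained by resampling the $p$- and $q$-coordinates) whose non-trivial spectrum is $\le c'_n\lambda$ by the local-to-global analysis of higher walks from \cite{LocSpecI} applied to the given link gaps; this again contributes $\le c'_n\lambda\|g_p\|_2\|g_q\|_2$. Collecting the two contributions, and tracking how the auxiliary spectral bound itself unfolds as a recursion over the face-dimension, produces the explicit $c_{n,A}$.

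Finally I would assemble everything: choose the pair $(p,q)$ realizing $\min_{0\le i<j\le n}\sqrt{\alpha_i\alpha_j}=\min_{i<j}\sqrt{|U_i||U_j|/(|S_i||S_j|)}$, use $\prod_{i\notin A}\alpha_i\le1$, and sum $|E_A|$ over all $A$ with $|A|\ge2$; the total error is at most $\big(\sum_{|A|\ge2}c_{n,A}\big)\,\lambda\min_{i<j}\sqrt{|U_i||U_j|/(|S_i||S_j|)}$, and it remains to verify that, after grouping subsets $A$ and recursion branches by the level $k$ at which the descent terminates, the constant collapses to the stated closed form $C_n=\sum_{k=0}^{n-1}n!\frac{(n+1-k)^{n-k}}{(n-k-1)!}\big((k+1)(k+2)^{n-k}-(k+1)^{n-k+1}\big)$.

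The step I expect to be the main obstacle is the middle paragraph, and inside it the quantitative spectral bound $\le c'_n\lambda$ for the induced higher-order walks: this is where one must invoke and make fully explicit the local-to-global spectral machinery of \cite{LocSpecI}, and its recursion over the face-dimension is exactly what forces the sum-over-$k$ shape of $C_n$. A secondary difficulty is carrying the whole descent with only a one-sided gap in the bipartite link graphs without losing a constant factor, which is where partiteness is genuinely used and where the partite constant ends up larger than its general-complex counterpart.
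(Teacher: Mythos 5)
The proposal takes a genuinely different route from the paper. You expand $\mathbbm{1}_{U_i}=\alpha_i+g_i$ and sum over the $2^{n+1}$ subsets $A$ recording which coordinates use the fluctuation $g_i$; the paper instead writes $m(X(U_0,\dots,U_n))$ as $\langle d^*_{n-1}d_{n-1}\chi_{X(U_0,\dots,U_{n-1})},\chi_{X(U_1,\dots,U_n)}\rangle$ using the signless differential and telescopes an $n$-term sum comparing $\prod P\,d_k^*d_k$ with $\prod P\,\frac{n+1-k}{n-k}d_{k-1}d_{k-1}^*$ level by level (Corollary \ref{inner-product coro} plus Lemma \ref{product norm bound - partite case}). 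The paper's telescope has only $n$ terms and at each step the error is controlled directly by Garland localization to $1$-dimensional links; your expansion has exponentially many terms and requires a separate spectral estimate for each.

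The genuine gap is in your middle paragraph, exactly where you expect it, but I think it is worse than a bookkeeping issue. To bound $\mathbb{E}_\tau[a(\tau)^2]$ for $a(\tau)$ the link-average of $g_p$ over $X_\tau(0)\cap S_p$, you need a spectral gap for the operator $W_p$ on $C(S_p)$ that descends to an $(n-2)$-face $\tau$ avoiding $S_p\cup S_q$ and then re-ascends to a vertex of $S_p$. That is a \emph{swap-type} walk, not a link walk; its non-trivial spectrum is not controlled by the spectral-descent results of \cite{LocSpecI} (those bound spectra of random walks \emph{in the links} $X_\sigma$, Theorem \ref{spec descent thm}, not spectra of induced walks back on $S_p$). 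Establishing the bound $\lambda_2(W_p)\leq c'_n\lambda$ is itself a local-to-global statement of the same flavor as the theorem you are proving, so as written the argument is circular: you need a global mixing-type bound to prove a global mixing bound. The paper avoids this by never introducing such an induced walk; the only spectral input it uses is $\Spec((M')^+_{\tau,0})\subseteq[-1,\lambda]\cup\{1\}$ for link graphs, fed into Lemma \ref{bound inner-product - partite case} via Garland localization. A secondary issue is that nothing in your sketch explains where the specific partite blowup factor $n!\,(n+1-k)^{n-k}/(n-k-1)!$ in $C_n$ would come from; in the paper it arises explicitly from the $\tfrac{n+1-k}{n-k}$ weighting of the lower walk that the one-sided partite gap forces (Equation \eqref{partite lemma eq} and Lemma \ref{product norm bound - partite case}), and there is no visible analogue of that phenomenon in your expansion.
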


In light of the results of the author in \cite{LocSpecI}, the above mixing results can be deduced based only on the spectral gaps of the $1$-dimensional links (see Corollary \ref{Explicit spec descent coro} in the appendix):
\begin{corollary}
\label{Mixing + spec descent coro - general}
Let $X$ be a pure $n$-dimensional simplicial complex with connected links. Assume that $X$ is $K$-regular in the following sense: for every $\lbrace v \rbrace \in X(0)$, $v$ is contained in exactly $K$ $n$-dimensional simplices of $X$. Assume that there is a constant $0< \lambda < 1$ such that for every $\tau \in X(n-2)$ the spectrum of the simple random walk on $X_\tau$ is contained in $[-\frac{\lambda}{1+(n-1)\lambda}, \frac{\lambda}{1+(n-1)\lambda}] \cup \lbrace 1 \rbrace$, then for every pairwise disjoint sets $U_0,...,U_n \subseteq X(0)$,
$$\left\vert \vert X(U_0,...,U_n) \vert - \dfrac{n! K}{\vert X(0) \vert^n} \vert U_0 \vert \vert U_1 \vert ... \vert U_n \vert \right\vert \leq C_n n! \lambda K  \min_{0 \leq i < j \leq n} \sqrt{ \vert U_i \vert \vert U_j \vert},$$
where $\vert X(U_0,...,U_n) \vert$ is the number of $n$-dimensional simplices which have one vertex in each of the sets $U_0,...,U_n$ and $C_n$ is a constant given by the formula:
$$C_n = \sum_{k=0}^{n-1} ((k+1)(k+2)^{n-k} - (k+1)^{n-k+1} ).$$ 
\end{corollary}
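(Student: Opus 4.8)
The plan is to obtain Corollary~\ref{Mixing + spec descent coro - general} as an essentially immediate consequence of Theorem~\ref{mixing thm for general complexes - intro}, by first using the spectral-descent statement recorded in the appendix as Corollary~\ref{Explicit spec descent coro} to convert the hypothesis of the present corollary into the hypothesis that Theorem~\ref{mixing thm for general complexes - intro} actually requires. Recall the gap between the two statements: the corollary only assumes a bound $\mu := \frac{\lambda}{1+(n-1)\lambda}$ on the spectrum of the \emph{simple} random walk on the links $X_\tau$ of the codimension-two faces $\tau \in X(n-2)$ (which are graphs), whereas Theorem~\ref{mixing thm for general complexes - intro} needs a uniform bound $\lambda$ on the spectrum of the \emph{weighted} walk on every link $X_\tau$ with $\tau \in \bigcup_{k=-1}^{n-2} X(k)$. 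So the only thing to do is to run the local-to-global spectral descent of \cite{LocSpecI}.

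First I would invoke Corollary~\ref{Explicit spec descent coro}: since $X$ has connected links and the simple-random-walk spectrum on each $X_\tau$, $\tau \in X(n-2)$, lies in $[-\mu,\mu]\cup\{1\}$ with $0<\mu<1$, the descent propagates this bound downward through the $n-1$ descent steps between dimension $n-2$ and dimension $-1$: a link $X_\tau$ with $\tau\in X(k)$ inherits the bound $\frac{\mu}{1-(n-2-k)\mu}$, which is increasing in the number $n-2-k$ of descent steps, so the extreme case is $\tau\in X(-1)$, i.e.\ the one-skeleton of $X$ itself, with bound $\frac{\mu}{1-(n-1)\mu}$. The one arithmetic check is that this never exceeds $\lambda$: with $\mu = \frac{\lambda}{1+(n-1)\lambda}$ one has $1-(n-1)\mu = \frac{1}{1+(n-1)\lambda}$, whence $\frac{\mu}{1-(n-1)\mu} = \lambda$ exactly --- this is precisely why the slightly unwieldy constant $\frac{\lambda}{1+(n-1)\lambda}$ appears in the hypothesis. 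Note also that $0<\mu<\lambda<1$ and $(n-1)\mu<1$, so all denominators are positive and every expression in sight is well defined, while connectedness of the links ensures the eigenvalue $1$ is simple in each of them, as both results require.

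Having upgraded the hypothesis in this way, I would then apply Theorem~\ref{mixing thm for general complexes - intro} directly: $X$ satisfies its $K$-regularity assumption, which is word-for-word the one in the corollary, and by the previous paragraph the weighted-walk spectrum on every $X_\tau$, $\tau \in \bigcup_{k=-1}^{n-2} X(k)$, is contained in $[-\lambda,\lambda]\cup\{1\}$. The theorem then yields, for all pairwise disjoint $U_0,\dots,U_n \subseteq X(0)$, exactly the inequality
$$\left\vert \vert X(U_0,\dots,U_n)\vert - \tfrac{n! K}{\vert X(0)\vert^n}\vert U_0\vert\cdots\vert U_n\vert\right\vert \le C_n\, n!\,\lambda K \min_{0\le i<j\le n}\sqrt{\vert U_i\vert\vert U_j\vert},$$
with the same constant $C_n = \sum_{k=0}^{n-1}\bigl((k+1)(k+2)^{n-k}-(k+1)^{n-k+1}\bigr)$, which is the claim.

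The deduction itself is short; the genuine obstacle sits in the two black boxes used here, namely the proof of Theorem~\ref{mixing thm for general complexes - intro} (the heart of the paper) and the precise form of the spectral-descent Corollary~\ref{Explicit spec descent coro} imported from \cite{LocSpecI}. In particular, one must be sure that the recursion governing how a spectral gap in the one-dimensional links forces spectral gaps in all higher-dimensional links is exactly the map $\mu \mapsto \frac{\mu}{1-j\mu}$ after $j$ steps (so that composing it through $n-1$ levels sends $\mu=\frac{\lambda}{1+(n-1)\lambda}$ back to $\lambda$), and that the ``simple random walk'' is indeed the right walk to reference on the one-dimensional links. Granting those two inputs, the corollary follows with no further work.
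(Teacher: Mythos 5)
Your proposal is correct and follows exactly the paper's own route: the paper proves this corollary in a single line, "Combining the above Corollary [\ref{mixing for regular coro}, the detailed form of Theorem~\ref{mixing thm for general complexes - intro}] with the spectral descent result of Corollary~\ref{Explicit spec descent coro}, yields Corollary~\ref{Mixing + spec descent coro - general}." Your arithmetic check that $\frac{\mu}{1-(n-1)\mu}=\lambda$ when $\mu=\frac{\lambda}{1+(n-1)\lambda}$ is precisely the computation inside the paper's proof of Corollary~\ref{Explicit spec descent coro}, and your identification of the simple random walk with the weighted walk on the $1$-dimensional links is justified since the homogeneous weight assigns weight $1$ to every edge of $X_\tau$ for $\tau\in X(n-2)$.
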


\begin{corollary}
\label{Mixing + spec descent coro - partite}
Let $X$ be a pure $n$-dimensional simplicial complex with connected links. Assume that $X$ is $(n+1)$-partite with sides $S_0,...,S_n$. Assume also that $X$ is partite-regular in the following sense: for every $0 \leq i \leq n$ there is a constant $K_i$ such that for every $\lbrace v \rbrace \in S_i$, $v$ is contained in exactly $K_i$ $n$-dimensional simplices of $X$. Assume that there is a constant $0< \lambda < 1$ such that for every $\tau \in X(n-2)$ the spectrum of the simple random walk on $X_\tau$ is contained in $[-1, \frac{\lambda}{1+(n-1)\lambda}] \cup \lbrace 1 \rbrace$, then for every sets $U_0 \subseteq S_0,...,U_n \subseteq S_n$,
$$\left\vert \dfrac{\vert X(U_0,...,U_n) \vert}{\vert X(n) \vert} - \dfrac{\vert U_0 \vert ... \vert U_n \vert}{\vert S_0 \vert ... \vert S_n \vert} \right\vert \leq C_n \lambda \min_{0 \leq i < j \leq n} \sqrt{\dfrac{\vert U_i \vert \vert U_j \vert}{\vert S_i \vert \vert S_j \vert}},$$
where 
$$C_n = \sum_{k=0}^{n-1} n! \dfrac{(n+1-k)^{n-k}}{(n-k-1)!}((k+1)(k+2)^{n-k} - (k+1)^{n-k+1} ).$$ 
\end{corollary}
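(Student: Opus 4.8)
The plan is to derive Corollary~\ref{Mixing + spec descent coro - partite} by combining Theorem~\ref{mixing thm for partite complexes - intro} with the spectral descent result of \cite{LocSpecI}, recorded in the appendix as Corollary~\ref{Explicit spec descent coro}. The two statements carry exactly the same structural hypotheses --- connected links, $(n+1)$-partiteness with sides $S_0,\dots,S_n$, and partite-regularity with constants $K_0,\dots,K_n$ --- and the same constant $C_n$; so the entire task is to pass from the spectral hypothesis of the corollary, a one-sided bound on the \emph{simple} random walk of the links $X_\tau$ with $\tau\in X(n-2)$ only, to the spectral hypothesis of Theorem~\ref{mixing thm for partite complexes - intro}, namely that the \emph{weighted} random walk on $X_\tau$ has spectrum in $[-1,\lambda]\cup\{1\}$ for \emph{every} $\tau\in\bigcup_{k=-1}^{n-2}X(k)$.

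First I would record the harmless observation that for $\tau\in X(n-2)$ the link $X_\tau$ is a graph and each of its edges lies in a unique $n$-simplex of $X$, so the weights $X$ induces on $X_\tau$ are the standard ones and the weighted random walk on $X_\tau$ is literally the simple random walk; thus the corollary's hypothesis is precisely that for all $\tau\in X(n-2)$ the (weighted) walk on $X_\tau$ has spectrum in $[-1,\mu]\cup\{1\}$ with $\mu:=\tfrac{\lambda}{1+(n-1)\lambda}$. Next I would apply Corollary~\ref{Explicit spec descent coro}: fed the bound $\mu$ on these codimension-$2$ links, the local-to-global descent of \cite{LocSpecI} yields, for every $-1\le k\le n-2$ and every $\tau\in X(k)$, a bound on the nontrivial spectrum of the weighted walk on $X_\tau$ obtained by iterating a recursion of the shape $\nu\mapsto\tfrac{\nu}{1-\nu}$; since this recursion acts on reciprocals as $\tfrac1\nu\mapsto\tfrac1\nu-1$, iterating it the $n-1$ times needed to pass from the codimension-$2$ links up to the $1$-skeleton itself sends $\mu$ to $\tfrac{\mu}{1-(n-1)\mu}$, which equals $\lambda$ for the chosen $\mu$; and $(n-1)\mu<1$ for every $0<\lambda<1$, so every intermediate bound is legitimate. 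The point to flag --- the one genuinely content-bearing step --- is that this descent controls only the top of the spectrum (it is a Garland-type eigenvalue estimate and uses no lower bound), so it propagates the one-sided bound $[-1,\mu]\cup\{1\}$ on the codimension-$2$ links to the one-sided bound $[-1,\lambda]\cup\{1\}$ on every link, which is exactly (and only) what the partite mixing theorem demands.

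With the hypothesis of Theorem~\ref{mixing thm for partite complexes - intro} thus met, that theorem applies directly and produces the stated inequality. I expect the only care needed beyond the above to be bookkeeping rather than mathematics: checking that the weighting conventions on links in \cite{LocSpecI} (hence in Corollary~\ref{Explicit spec descent coro}) coincide with those used in the body of this paper for Theorem~\ref{mixing thm for partite complexes - intro}, and tracking the descent constant precisely enough that it collapses to $\lambda$ itself and not merely to a comparable quantity. There is no new analytic difficulty; all the substance sits in the two results being assembled.
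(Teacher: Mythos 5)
Your proposal is correct and is exactly the paper's route: the author's own justification (given right after Corollary \ref{mixing for partite regular coro}) is ``Combining the above Corollary with the spectral descent result of Corollary \ref{Explicit spec descent coro}, yields Corollary \ref{Mixing + spec descent coro - partite}.'' Your tracking of the descent constant $\mu=\frac{\lambda}{1+(n-1)\lambda}\mapsto\frac{\mu}{1-(n-1)\mu}=\lambda$, the observation that the homogeneous weight makes the walk on a codimension-$2$ link the simple random walk, and the note that only the one-sided (upper) bound survives the descent all match the paper's intended argument.
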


\begin{remark}
The results of \cite{LocSpecI}, were stated in terms of the Laplacians and not the spectral gaps of the random walks, therefore we include the formulation of these results in the terminology of random walks in the appendix. 
\end{remark}

It is interesting to compare these mixing result to previously known results: 
\begin{itemize}
\item In the general case, mixing for random complexes was proven by Parzanchevski, Rosenthal and Tessler \cite{PRT}. The error term in that work was of the form $\sqrt{\vert U_0 \vert \vert U_n \vert} \vert U_1 \vert ... \vert U_{n-1} \vert$. Also, a criterion for mixing for general (non-random) complexes based on the spectra of all high dimensional Laplacians was proven by Parzanchevski \cite{Par} and the error term in that result was $\max_i \vert U_i \vert$. Our results improve the error terms in both works. 
\item  Mixing for partite Ramanujan complexes was proven by Evra, Golubev and Lubotzky \cite{EGL}. The technique of \cite{EGL} is very different from ours and relied on a quantitative version of property (T). The error term in \cite{EGL} is independent of the size of the sets, i.e., in \cite{EGL} the bound on $\left\vert \frac{\vert X(U_0,...,U_n) \vert}{\vert X(n) \vert} - \frac{\vert U_0 \vert ... \vert U_n \vert}{\vert S_0 \vert ... \vert S_n \vert} \right\vert$ is indpenedent on $\vert U_0 \vert,...,\vert U_n \vert$ and depends only on the thickness (or equivalently, the local spectral gap) of the complex. Our results improve on the work of \cite{EGL} regarding both generality (since it is purely combinatorial) and the error term.  
\end{itemize}

Using mixing, we derive geometric overlapping property (see below). Our work provides new examples of bounded degree families of complexes with (uniform) geometric overlapping, because it implies geometric overlapping for 
\begin{enumerate}
\item Quotients of affine buildings of any type (assuming large enough thickness), while previous results \cite{FGLNP} dealt only with quotients of $\widetilde{A}$ buildings.
\item The new constructions of high dimensional expander of the author and Tali Kaufman \cite{KO-construction}.   
\end{enumerate}

\begin{remark}
Earlier drafts of these results appeared online, but the proofs were given using Laplacians and not random walks operators. We found that passing to the language of random walks simplify the proofs considerably, since there is not need to deal with issues of orientation.
\end{remark}

\paragraph{Structure of the paper.} In Section \ref{Weighted simplicial complexes sec}, we recall the basic terminology of weighted simplicial complexes and further develop this framework in for simplices defined by vertex sets and partite simplicial complexes. In Section \ref{The signless differential and random walks sec}, we define the ``signless differential'' and show the connection between this differential and random walks operators. In Section \ref{Non-trivial spectrum and random walks in the partite case sec}, we develop some results regarding the spectrum of random walks in partite complex that are needed to prove mixing in the partite case. In Section \ref{Links and localization sec}, we develop localization results of the random walks operator \`{a} la Garland in the case of the signless differential. In Section \ref{Random walks on sets of simplices determined by vertex sets sec}, we study the connections between the upper and lower random walks on sets of simplices determined by vertex sets. In Section \ref{Mixing sec}, we prove the main mixing results of this article. In Section  \ref{Geometric overlapping sec}, we show how to deduce geometric overlapping from our mixing results, based on a result of Pach. In the appendix, we state the spectral descent results of \cite{LocSpecI} in terms of random walks.

\begin{remark}
Part of the terminology and results of Sections \ref{The signless differential and random walks sec} and \ref{Links and localization sec} already appeared in an article of the author with Tali Kaufman \cite{KO-RW}.
\end{remark}

\section{Weighted simplicial complexes}
\label{Weighted simplicial complexes sec}
Let $X$ be a pure $n$-dimensional finite simplicial complex. For $-1 \leq k \leq n$, we denote $X (k)$ to be the set of all $k$-simplices in $X$ ($X (-1) = \lbrace \emptyset \rbrace$). A balanced weight function $m$ on $X$ is a function:
$$m : \bigcup_{-1 \leq k \leq n} X (k) \rightarrow \mathbb{R}^+,$$
such that for every $-1 \leq k \leq n-1$ and for every $\tau \in X (k)$ we have that
$$m (\tau) = \sum_{\sigma \in X (k+1)} m(\sigma).$$
By its definition, it is clear the $m$ is determined by the values in takes in $X (n)$. A simplicial complex with a balanced weight function will be called a weighted simplicial complex. For a pure $n$-dimensional simplicial complex there is a natural balanced weight function $m_h$ which we call the homogeneous weight function defined by $m_h (\sigma) = 1$ for every $\sigma \in X(n)$.

\begin{proposition}\cite{LocSpecI}[Proposition 2.7]
\label{weight in n dim simplices}
For every $-1 \leq k \leq n$ and every $\tau \in X (k) $ we have that
$$\dfrac{1}{(n-k)!}  m (\tau) =\sum_{\sigma \in X (n), \tau \subseteq \sigma} m (\sigma ),$$
where $\tau \subseteq \sigma$ means that $\tau$ is a face of $\sigma$.

In particular, the homogeneous weight $m$ on $X$ can be written explicitly as
$$ \forall -1 \leq k \leq n, \forall \tau \in X (k), \dfrac{1}{(n-k)!}  m(\tau) =  \vert \lbrace \sigma \in X (n) : \tau \subseteq \sigma \rbrace \vert .$$

\end{proposition}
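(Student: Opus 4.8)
The plan is to prove the identity by downward induction on $k$, from $k = n$ down to $k = -1$, using the defining relation of a balanced weight function together with an elementary double-counting argument. Throughout, I read the balanced-weight relation $m(\tau) = \sum_{\sigma \in X(k+1)} m(\sigma)$ as a sum over the cofaces of $\tau$, i.e. over those $\sigma \in X(k+1)$ with $\tau \subseteq \sigma$.

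\emph{Base case.} For $k = n$ we have $(n-k)! = 0! = 1$, and the only $\sigma \in X(n)$ with $\tau \subseteq \sigma$ is $\sigma = \tau$, so the right-hand side equals $m(\tau)$ and the identity holds trivially.

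\emph{Inductive step.} Suppose the identity holds for $k+1$, where $-1 \leq k \leq n-1$, and let $\tau \in X(k)$. By the definition of a balanced weight function, $m(\tau) = \sum_{\eta \in X(k+1),\, \tau \subseteq \eta} m(\eta)$. Substituting the inductive hypothesis $m(\eta) = (n-k-1)!\sum_{\sigma \in X(n),\, \eta \subseteq \sigma} m(\sigma)$ into each summand and interchanging the order of summation yields
$$m(\tau) = (n-k-1)! \sum_{\sigma \in X(n),\, \tau \subseteq \sigma} \vert \lbrace \eta \in X(k+1) : \tau \subseteq \eta \subseteq \sigma \rbrace \vert \; m(\sigma).$$
The key combinatorial point is that for fixed $\tau \in X(k)$ and $\sigma \in X(n)$ with $\tau \subseteq \sigma$, every $(k+1)$-simplex $\eta$ with $\tau \subseteq \eta \subseteq \sigma$ is obtained by adjoining to $\tau$ exactly one of the $n-k$ vertices of $\sigma \setminus \tau$, so the inner cardinality equals $n-k$. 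Hence $m(\tau) = (n-k-1)!\,(n-k) \sum_{\sigma \in X(n),\, \tau \subseteq \sigma} m(\sigma) = (n-k)! \sum_{\sigma \in X(n),\, \tau \subseteq \sigma} m(\sigma)$, and dividing by $(n-k)!$ closes the induction. The ``in particular'' clause is then immediate: for the homogeneous weight $m_h$ one has $m_h(\sigma) = 1$ for all $\sigma \in X(n)$, so $\sum_{\sigma \in X(n),\, \tau \subseteq \sigma} m_h(\sigma) = \vert \lbrace \sigma \in X(n) : \tau \subseteq \sigma \rbrace \vert$.

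I do not expect a genuine obstacle here; this is a routine induction. The only points requiring care are the double-counting of the intermediate simplices $\eta$ between $\tau$ and $\sigma$, and the degenerate conventions at the endpoints of the induction (namely $0! = 1$ at $k = n$ and $X(-1) = \lbrace \emptyset \rbrace$ at $k = -1$), together with the reading of the balanced-weight relation as a sum over cofaces.
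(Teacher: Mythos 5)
Your proof is correct. Since the proposition is cited from \cite{LocSpecI}, the present paper does not reproduce its proof, but your downward induction—using the balanced-weight recursion together with the observation that there are exactly $n-k$ intermediate $(k+1)$-simplices $\eta$ with $\tau \subseteq \eta \subseteq \sigma$—is the standard and essentially unique argument for this fact, and it matches the proof in the cited reference. You also correctly note that the paper's displayed definition of a balanced weight must be read as a sum over cofaces of $\tau$, and your handling of the boundary cases ($0!=1$ at $k=n$, $X(-1)=\lbrace\emptyset\rbrace$ at $k=-1$) is sound.
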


\begin{corollary}\cite{LocSpecI}[Corollary 2.8]
\label{weight in l dim simplices}
For every $-1 \leq k < l \leq n$ and every $\tau \in X (k) $ we have
$$\dfrac{1}{(l-k)!} m(\tau) = \sum_{\sigma \in X (l), \tau \subset \sigma} m(\sigma) .$$
\end{corollary}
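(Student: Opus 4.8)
The statement to prove is Corollary~\ref{weight in l dim simplices}, which extends Proposition~\ref{weight in n dim simplices} from top-dimensional simplices to intermediate dimensions.

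\medskip

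The plan is to deduce the claim from Proposition~\ref{weight in n dim simplices} by a double-counting argument over the flag of simplices $\tau \subset \sigma \subset \rho$ with $\tau \in X(k)$, $\sigma \in X(l)$, and $\rho \in X(n)$. First I would fix $\tau \in X(k)$ and apply Proposition~\ref{weight in n dim simplices} to $\tau$ directly, obtaining
$$\frac{1}{(n-k)!} m(\tau) = \sum_{\rho \in X(n),\, \tau \subseteq \rho} m(\rho).$$
Next I would apply Proposition~\ref{weight in n dim simplices} to each $\sigma \in X(l)$ containing $\tau$, giving $\frac{1}{(n-l)!} m(\sigma) = \sum_{\rho \in X(n),\, \sigma \subseteq \rho} m(\rho)$, hence
$$\sum_{\sigma \in X(l),\, \tau \subset \sigma} m(\sigma) = (n-l)! \sum_{\sigma \in X(l),\, \tau \subset \sigma}\ \sum_{\rho \in X(n),\, \sigma \subseteq \rho} m(\rho).$$

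\medskip

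The key combinatorial step is then to interchange the order of summation on the right-hand side and count, for each fixed $\rho \in X(n)$ with $\tau \subseteq \rho$, how many $\sigma \in X(l)$ satisfy $\tau \subset \sigma \subset \rho$. Since $\rho$ has $n+1$ vertices and $\tau$ occupies $k+1$ of them, choosing such a $\sigma$ amounts to choosing an $(l-k)$-element subset of the remaining $n-k$ vertices of $\rho$; there are exactly $\binom{n-k}{l-k}$ such choices, and crucially this number does not depend on $\rho$. Therefore
$$\sum_{\sigma \in X(l),\, \tau \subset \sigma} m(\sigma) = (n-l)! \binom{n-k}{l-k} \sum_{\rho \in X(n),\, \tau \subseteq \rho} m(\rho) = (n-l)! \binom{n-k}{l-k} \cdot \frac{1}{(n-k)!} m(\tau).$$
A short simplification of $(n-l)!\binom{n-k}{l-k}/(n-k)! = 1/(l-k)!$ finishes the proof.

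\medskip

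I do not anticipate a genuine obstacle here: the only thing to be careful about is the bookkeeping of which simplices are faces of which, and the fact that every $\sigma \in X(l)$ with $\tau \subset \sigma$ is itself contained in at least one $\rho \in X(n)$ (so no terms are lost) — this holds because $X$ is pure $n$-dimensional. One could alternatively prove the statement by downward induction on $l$ using only the defining relation $m(\tau) = \sum_{\sigma \supset \tau,\, \sigma \in X(k+1)} m(\sigma)$ of a balanced weight function, iterating $n-l$ times and tracking multiplicities, but the route through Proposition~\ref{weight in n dim simplices} is cleaner since the multiplicity count is absorbed into a single binomial coefficient.
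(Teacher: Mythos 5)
Your proof is correct and is the natural derivation of the corollary from Proposition~\ref{weight in n dim simplices} (the statement is cited from \cite{LocSpecI} without a proof being reproduced here, but the double-counting route you use is the standard one). Applying the proposition both to $\tau$ and to each $\sigma \supset \tau$ in $X(l)$, then interchanging the sum and counting $\binom{n-k}{l-k}$ intermediate $l$-simplices between $\tau$ and each top-dimensional $\rho$, together with the simplification $(n-l)!\binom{n-k}{l-k}/(n-k)! = 1/(l-k)!$, is exactly right; your remark that purity ensures no $\sigma$ is dropped (and is also implicit in applying the proposition to $\sigma$ itself) is a correct and appropriate bookkeeping check.
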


For $-1 \leq k \leq n$ and a set $\emptyset \neq U \subseteq X (k)$, we denote
$$m(U) = \sum_{\sigma \in U} m(\sigma ).$$

\begin{proposition}
\label{weight of X^k compared to X^l}
For every $-1 \leq k < l \leq n$,
$$m(X (k)) = \dfrac{(l+1)!}{(k+1)!} m (X (l)). $$
\end{proposition}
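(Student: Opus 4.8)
The plan is to reduce to the consecutive case $l = k+1$ and then telescope, using the defining property of a balanced weight function together with a simple double-counting of faces. Concretely, for $-1 \leq k \leq n-1$ I would write
$$m(X(k)) = \sum_{\tau \in X(k)} m(\tau) = \sum_{\tau \in X(k)} \; \sum_{\sigma \in X(k+1),\, \tau \subset \sigma} m(\sigma),$$
where the second equality is exactly the balanced-weight condition $m(\tau) = \sum_{\sigma \in X(k+1)} m(\sigma)$ restricted to the $\sigma$ containing $\tau$ (the only ones contributing). Interchanging the order of summation gives
$$m(X(k)) = \sum_{\sigma \in X(k+1)} m(\sigma) \cdot \bigl\vert \{\tau \in X(k) : \tau \subset \sigma\} \bigr\vert = (k+2)\, m(X(k+1)),$$
since a $(k+1)$-simplex $\sigma$ has exactly $k+2$ vertices, hence exactly $k+2$ faces of dimension $k$.

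Having established $m(X(k)) = (k+2)\, m(X(k+1))$ for every $-1 \leq k \leq n-1$, I would iterate this identity from $k$ up to $l$:
$$m(X(k)) = (k+2)(k+3)\cdots(l+1)\, m(X(l)) = \frac{(l+1)!}{(k+1)!}\, m(X(l)),$$
which is the claimed formula. Alternatively, one can get the result in one shot by invoking Corollary \ref{weight in l dim simplices} directly: $m(X(k)) = (l-k)! \sum_{\tau \in X(k)} \sum_{\sigma \in X(l),\, \tau \subset \sigma} m(\sigma) = (l-k)! \binom{l+1}{k+1} m(X(l))$, and $(l-k)!\binom{l+1}{k+1} = (l+1)!/(k+1)!$; but the telescoping argument keeps the dependence on prior results minimal.

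I do not anticipate a genuine obstacle here — the only points requiring a moment of care are the bookkeeping of which $\sigma$ actually contribute in the double sum (purity of $X$ guarantees every $\tau \in X(k)$ lies in some top simplex, so no face sets are empty) and the edge case $k = -1$, where $X(-1) = \{\emptyset\}$ and the formula reads $m(\emptyset) = (l+1)!\, m(X(l))$, which is consistent with Proposition \ref{weight in n dim simplices}. Everything else is a routine interchange of finite sums.
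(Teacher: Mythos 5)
Your proposal is correct. The paper proves this in one shot exactly along the lines of your ``alternative'' route: it expands $m(\tau)$ via Corollary~\ref{weight in l dim simplices}, interchanges the sum over $\tau \in X(k)$ and $\sigma \in X(l)$, counts $\binom{l+1}{k+1}$ faces $\tau \subset \sigma$, and simplifies $(l-k)!\binom{l+1}{k+1}=\frac{(l+1)!}{(k+1)!}$. Your primary route instead proves the single-step identity $m(X(k)) = (k+2)\,m(X(k+1))$ directly from the balance condition and telescopes. The two are the same double-counting idea; yours is marginally more self-contained since it bypasses Corollary~\ref{weight in l dim simplices} and uses only the defining property of the weight, while the paper's is marginally shorter since that corollary is already available. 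Either is acceptable; your handling of the $k=-1$ edge case and the purity assumption is also sound.
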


\begin{proof}
By corollary \ref{weight in l dim simplices}, we have that
\begin{dmath*}
m(X (k)) = \sum_{\tau \in X (k)} m(\tau ) = \sum_{\tau \in X (k)} (l-k)! \sum_{\sigma \in X (l), \tau \subset \sigma} m(\sigma) =\sum_{\sigma \in X (l)} (l-k)! m(\sigma) \sum_{\tau \in X (k), \tau \subset \sigma} 1 = \sum_{\sigma \in X (l)} (l-k)! m(\sigma) {l+1 \choose k+1} = \dfrac{(l+1)!}{(k+1)!} m(X (l)) .
\end{dmath*}
\end{proof}

Throughout this article, $X$ is a pure $n$-dimensional finite weighted simplicial complex with a balanced weight function $m$. 

\subsection{Sets of simplices determined by vertex sets}
Given a weight simplicial complex $X$ as above, $0 \leq k \leq n$ and $U_0,...,U_k \subseteq X(0)$ pairwise disjoint sets of vertices of $X$, we define $X(U_0,...,U_k) \subseteq X(k)$ as follows: 
$$X(U_0,...,U_k) = \lbrace \sigma \in X(k) : \forall 0 \leq i \leq k, \vert \sigma \cap U_i \vert =1 \rbrace,$$
i.e., $X(U_0,...,U_k)$ is the set of $k$-simplices of $X$ that have one vertex in each of the $U_i$'s. Note that by this definition $X(U_i) = U_i$.

\subsection{Weighted partite simplicial complexes}

A pure $n$-dimensional simplicial complex $X$ is called $(n+1)$-partite if the are disjoint sets $S_0,...,S_n \subseteq X(0)$ called the sides of $X$ such that $X(0) =\bigcup_{i=0}^n S_i$ and such that for every $\sigma \in X(n)$ and every $0 \leq i \leq n$, $\vert \sigma \cap S_i \vert =1$, i.e., all the $n$-dimensional simplices in $X$ have one vertex in each side of $X$.

We will need the following facts regarding weighted partite complexes.
\begin{proposition}
\label{weight in partite complex prop}
Let $X$ be a pure $n$-dimensional, $(n+1)$-partite simplicial complex with sides $S_0,...,S_n$. Then for every $-1 \leq k \leq n-1$, every $\tau \in X(k)$ and every $0 \leq i \leq n$, the following holds:
$$\sum_{\sigma \in X(k+1), \tau \subseteq \sigma, \sigma \cap S_i \neq \emptyset} m(\sigma) = \begin{cases}
m(\tau) & \tau \cap S_i \neq \emptyset \\
\frac{1}{n-k} m(\tau) & \tau \cap S_i = \emptyset
\end{cases}.$$
\end{proposition}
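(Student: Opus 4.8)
The plan is to prove the two cases separately, both times by summing the balanced-weight identity $m(\tau) = \sum_{\sigma \in X(k+1),\, \tau \subseteq \sigma} m(\sigma)$ over the appropriate faces and keeping track of how many of the one-vertex extensions of $\tau$ land in the side $S_i$. First I would fix $\tau \in X(k)$, write $\tau = \lbrace v_{j_0},\dots,v_{j_k}\rbrace$, and note that any $\sigma \in X(k+1)$ with $\tau \subseteq \sigma$ has the form $\sigma = \tau \cup \lbrace w \rbrace$ for a vertex $w \notin \tau$. Since $X$ is $(n+1)$-partite, $w$ lies in exactly one side, and $\sigma \cap S_i \neq \emptyset$ iff either $\tau$ already meets $S_i$, or $w \in S_i$.

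For the case $\tau \cap S_i \neq \emptyset$: then $\sigma \cap S_i \neq \emptyset$ automatically for \emph{every} $\sigma \supseteq \tau$ in $X(k+1)$, so the sum on the left is just $\sum_{\sigma \in X(k+1),\, \tau \subseteq \sigma} m(\sigma)$, which equals $m(\tau)$ by the defining property of the balanced weight function. This disposes of the first branch immediately.

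For the case $\tau \cap S_i = \emptyset$: here $\sigma = \tau \cup \lbrace w\rbrace$ meets $S_i$ exactly when $w \in S_i$. The key combinatorial step is to compute the ``fraction'' of the weight $m(\tau)$ that is carried by extensions into $S_i$. I would argue by symmetry among the $n-k$ sides not met by $\tau$: since $\tau$ has dimension $k$ and $X$ is partite, $\tau$ meets exactly $k+1$ of the $n+1$ sides, leaving $n-k$ sides $S_{i_1},\dots,S_{i_{n-k}}$ (one of which is $S_i$) disjoint from $\tau$. Every $\sigma \in X(k+1)$ with $\tau \subseteq \sigma$ extends $\tau$ into exactly one of these $n-k$ sides, so the sums $A_{i_\ell} := \sum_{\sigma \in X(k+1),\, \tau \subseteq \sigma,\, \sigma \cap S_{i_\ell} \neq \emptyset} m(\sigma)$ partition $m(\tau)$, i.e. $\sum_{\ell=1}^{n-k} A_{i_\ell} = m(\tau)$. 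It remains to show all the $A_{i_\ell}$ are equal, from which $A_i = \frac{1}{n-k} m(\tau)$ follows. This is the step I expect to be the main obstacle, and I would handle it by descending induction on $k$ (from $k = n-1$ downward) or, more cleanly, by invoking Corollary \ref{weight in l dim simplices} with $l = n$: for each side $S_{i_\ell}$ not met by $\tau$, the faces $\sigma \in X(k+1)$ with $\tau \subseteq \sigma$ and $\sigma \cap S_{i_\ell} \neq \emptyset$ are in bijection (via further extension to top-dimensional simplices and Proposition \ref{weight in n dim simplices}) with a weighted count of $\sigma' \in X(n)$ containing $\tau$, and since each $\sigma' \in X(n)$ containing $\tau$ meets \emph{all} $n-k$ of the missing sides in exactly one vertex, the contribution of $\sigma'$ is split evenly. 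Concretely, $A_{i_\ell} = (n-k-1)! \sum_{\sigma' \in X(n),\, \tau \subseteq \sigma'} m(\sigma')$ independently of $\ell$, because grouping the $(n-k)$-element sets $\sigma \in X(k+1)$... wait, one should instead note $A_{i_\ell} = \sum_{\sigma' \in X(n),\, \tau \subseteq \sigma'} m(\sigma') \cdot (\text{number of }(k+1)\text{-faces of }\sigma'\text{ containing }\tau\text{ and meeting }S_{i_\ell})$, and that number is $1$ for each such $\sigma'$ and each $\ell$. Hence $A_{i_\ell} = \sum_{\sigma' \in X(n),\, \tau \subseteq \sigma'} m(\sigma') = \frac{1}{(n-k)!} m(\tau)$ — but summing over all $n-k$ values of $\ell$ must recover $m(\tau)$, forcing the per-$\sigma'$ bookkeeping to be reconciled carefully; the honest version is that each $\sigma' \supseteq \tau$ in $X(n)$ contributes its weight $m(\sigma')$ to exactly one $A_{i_\ell}$ \emph{if we restrict to $\sigma = \sigma' \cap (\tau \cup S_{i_\ell})$-type faces}, which is not how $X(k+1)$ works — so the clean route is really the symmetry/induction argument, using that the link $X_\tau$ is itself partite with $n-k$ sides and applying the balanced-weight property one level up. I would write the induction so that the base case $k = n-1$ is the statement that the $n-k=1$ extensions trivially sum to $m(\tau)$, and the inductive step passes to links.

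Once both branches are established the proposition is complete; the only real content is the even-splitting claim in the disjoint case, and I would make sure to phrase it via the partiteness of the link $X_\tau$ (which has sides $S_{i_1}\cap X_\tau,\dots,S_{i_{n-k}}\cap X_\tau$) together with Corollary \ref{weight in l dim simplices} to avoid any orientation or double-counting pitfalls.
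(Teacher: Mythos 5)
Your first branch is fine and matches the paper exactly, and in the second branch you in fact land on the paper's argument before talking yourself out of it. The issue is purely bookkeeping. You write
\[
A_{i_\ell} = (n-k-1)!\sum_{\sigma' \in X(n),\, \tau \subseteq \sigma'} m(\sigma'),
\]
which is correct, and which is precisely the paper's computation; but you then second-guess it and restate $A_{i_\ell}$ \emph{without} the $(n-k-1)!$ factor, at which point the arithmetic comes out as $\tfrac{1}{(n-k)!}m(\tau)$ instead of $\tfrac{1}{n-k}m(\tau)$. Noticing that this cannot be right (since the $n-k$ quantities $A_{i_\ell}$ must sum to $m(\tau)$), you retreat to a vague ``symmetry/induction on the link'' plan that you never actually carry out, so the proposal as written does not close the gap.

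The repair is one line. Proposition \ref{weight in n dim simplices} applied to $\sigma \in X(k+1)$ gives $m(\sigma) = (n-k-1)!\sum_{\eta \in X(n),\, \sigma \subseteq \eta} m(\eta)$ — the factorial is part of the identity and must be kept. Substituting this in, interchanging the two sums, and using that each $\eta \in X(n)$ with $\tau \subseteq \eta$ contains a \emph{unique} $\sigma \in X(k+1)$ with $\tau \subseteq \sigma \subseteq \eta$ and $\sigma \cap S_i \neq \emptyset$ (because $\eta$ meets $S_i$ in exactly one vertex) collapses the double sum to $(n-k-1)!\sum_{\eta \in X(n),\, \tau \subseteq \eta} m(\eta)$, and then a second application of Proposition \ref{weight in n dim simplices} (now to $\tau$) gives $\tfrac{(n-k-1)!}{(n-k)!} m(\tau) = \tfrac{1}{n-k} m(\tau)$. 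No symmetry claim, no induction, and no appeal to the partite structure of $X_\tau$ is needed; the observation that the $A_{i_\ell}$ are equal and partition $m(\tau)$ is then a consequence of the computed value, not an input to it, and is useful only as a sanity check.
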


\begin{proof}
We can assume without loss of generality that $i=0$. In the case where $\tau \cap S_0 \neq \emptyset$, then for every $\sigma \in X(k+1)$ such that $\tau \subseteq \sigma$ it holds that $\sigma \cap S_0 \neq \emptyset$ and the equality follows by the definition of the balanced weight function. Assume that $\tau \cap S_0 = \emptyset$, then for every $\eta \in X(n)$ such that $\tau \subseteq \eta$, there is a unique $\sigma \in X(k+1)$ such that $\tau \subseteq \sigma \subseteq \eta$ and $\sigma \cap S_0 \neq \emptyset$. Therefore by Proposition \ref{weight in n dim simplices}, 
\begin{dmath*}
\sum_{\sigma \in X(k+1), \tau \subseteq \sigma, \sigma \cap S_0 \neq \emptyset} m(\sigma) =
\sum_{\sigma \in X(k+1), \tau \subseteq \sigma, \sigma \cap S_0 \neq \emptyset} (n-(k+1))! \sum_{\eta \in X(n), \sigma \subseteq \eta} m(\eta) =
(n-(k+1))! \sum_{\eta \in X(n), \tau \subseteq \eta} m(\eta) = \frac{1}{n-k} m(\tau),
\end{dmath*}
where the last equality is again by Proposition \ref{weight in n dim simplices}.
\end{proof}

\begin{corollary}
\label{weight of S_i coro}
Let $X$ be a pure $n$-dimensional, $(n+1)$-partite simplicial complex with sides $S_0,...,S_n$. Then for every $0 \leq i \leq n$, $m(S_i) = \frac{1}{n+1} m(X(0))$.
\end{corollary}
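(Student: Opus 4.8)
The plan is to apply Proposition \ref{weight in partite complex prop} in its extreme instance $k=-1$, $\tau=\emptyset$, for which the hypothesis $\tau\cap S_i=\emptyset$ holds trivially. Since $X(-1)=\lbrace\emptyset\rbrace$ and the condition $\emptyset\subseteq\sigma$ is vacuous, the proposition (second case, with $n-k=n+1$) yields
$$\sum_{\sigma\in X(0),\ \sigma\cap S_i\neq\emptyset} m(\sigma)=\frac{1}{n+1}\,m(\emptyset).$$
Next I would observe that for a singleton $\sigma=\lbrace v\rbrace\in X(0)$ the condition $\sigma\cap S_i\neq\emptyset$ is equivalent to $\sigma\in S_i$, because the sides $S_0,\dots,S_n$ are pairwise disjoint and cover $X(0)$; hence the left-hand side above is exactly $\sum_{\sigma\in S_i}m(\sigma)=m(S_i)$. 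Finally, the defining property of a balanced weight function at level $k=-1$ gives $m(\emptyset)=\sum_{\sigma\in X(0)}m(\sigma)=m(X(0))$ (equivalently, this is Proposition \ref{weight of X^k compared to X^l} applied with $k=-1$, $l=0$, where the factor $\tfrac{(l+1)!}{(k+1)!}$ equals $1$). Combining the three observations gives $m(S_i)=\tfrac{1}{n+1}m(X(0))$ for every $0\leq i\leq n$.

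There is essentially no obstacle here: the statement is a one-line consequence of Proposition \ref{weight in partite complex prop} once the boundary index $k=-1$ is handled, and the only point meriting care is checking that the proposition is genuinely valid (and correctly read) at $k=-1$, where $X(k+1)=X(0)$ and the coefficient $\tfrac{1}{n-k}$ specializes to $\tfrac{1}{n+1}$. (Alternatively, one could argue directly that all $m(S_i)$ are equal: by Proposition \ref{weight in n dim simplices}, $m(\lbrace v\rbrace)=n!\sum_{\eta\in X(n),\,v\in\eta}m(\eta)$, and since every $\eta\in X(n)$ meets $S_i$ in exactly one vertex, $m(S_i)=n!\,m(X(n))$ independently of $i$; then $m(X(0))=\sum_{i=0}^n m(S_i)=(n+1)m(S_i)$. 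I would mention this as a remark but use the first argument as the main proof, since it reuses Proposition \ref{weight in partite complex prop} verbatim.)
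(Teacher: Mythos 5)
Your proof is correct and matches the paper's own proof, which simply invokes Proposition \ref{weight in partite complex prop} at $k=-1$ (the paper writes only ``Apply the above proposition in the case $k=-1$''); you have merely spelled out the details that the paper leaves implicit.
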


\begin{proof}
Apply the above proposition in the case $k=-1$. 
\end{proof}

\section{The signless differential and random walks}
\label{The signless differential and random walks sec}
For $-1 \leq k \leq n-1$, we denote $C^k (X, \mathbb{R})$ to be the set of all functions $\phi : X (k) \rightarrow \mathbb{R}$. Abusing the terminology, we will call the space $C^k (X, \mathbb{R})$ the space of non-oriented cochains. On $C^k (X,\mathbb{R})$ define the following inner-product:
$$\forall \phi, \psi \in C^k (X,\mathbb{R}), \langle \phi, \psi \rangle = \sum_{\sigma \in  X (k)} m (\sigma ) \phi (\sigma) \psi (\sigma).$$
Denote by $\Vert . \Vert$ the norm induced by this inner-product.

\begin{definition}
For $-1 \leq k \leq n-1$, the signless $k$-differential is an operator $d_k : C^k (X,\mathbb{R}) \rightarrow C^{k+1} (X,\mathbb{R})$ defined as:
$$\forall \phi \in C^k (X,\mathbb{R}), \forall \sigma \in X (k+1), d_k \phi (\sigma) = \sum_{\tau \subset \sigma, \tau \in X (k)} \phi (\tau).$$
Define $(d_{k})^* : C^{k+1} (X,\mathbb{R}) \rightarrow  C^{k} (X,\mathbb{R})$ to be the adjoint operator to $d_k$, i.e., the operator such that for every $\phi \in  C^{k} (X,\mathbb{R}), \psi \in C^{k+1} (X,\mathbb{R})$, $\langle d_k \phi, \psi \rangle = \langle \phi, (d_{k})^* \psi \rangle$.
\end{definition}

\begin{remark}
We note that the signless differential is not a differential in the usual sense, since $d_{k+1} d_k \neq 0$. The name signless differential stems from the fact that this is the operator we will use in lieu of the differential in our setting (note that since our non-oriented cochains are defined without using orientation of simplices, we cannot use the usual differential).
\end{remark}

Below, we will sometimes omit the index of signless differential and its adjoint and just denote $d, d^*$ where $k$ will be implicit.

\begin{lemma}
\label{d* lemma}
For $-1 \leq k \leq n-1$, $d^* : C^{k+1} (X,\mathbb{R}) \rightarrow  C^{k} (X,\mathbb{R})$ is the operator
$$\forall \psi \in C^{k+1} (X,\mathbb{R}), \forall \tau \in X (k), d^* \psi (\tau) = \sum_{\sigma \in X (k+1), \tau \subset \sigma} \dfrac{m(\sigma)}{m(\tau)} \psi (\sigma).$$
\end{lemma}

\begin{proof}
Let $\phi \in C^k (X,\mathbb{R})$ and $\psi \in C^{k+1} (X,\mathbb{R})$. Then
\begin{dmath*}
\langle d \phi, \psi \rangle = \sum_{\sigma \in X (k+1)} m (\sigma) d \phi (\sigma) \psi (\sigma) = \sum_{\sigma \in X (k+1)} m (\sigma) \sum_{\tau \in X (k), \tau \subset \sigma} \phi (\tau) \psi (\sigma) = \sum_{\tau \in X (k)} \phi (\tau) \sum_{\sigma \in X (k+1), \tau \subset \sigma} m (\sigma)  \psi (\sigma) = \sum_{\tau \in X (k)} m(\tau) \phi (\tau) \left( \sum_{\sigma \in X (k+1), \tau \subset \sigma} \dfrac{m(\sigma)}{m(\tau)}  \psi (\sigma) \right) = \langle \phi, d^* \psi \rangle.
\end{dmath*}
\end{proof}

For $X$ as above we will define the following random walks on simplices of $X$:

\begin{definition}
For $0 \leq k \leq n-1$, the upper random walk on $k$-simplices is defined by the transition probability matrix $M^+_k : X (k) \times X (k) \rightarrow \mathbb{R}$:
$$M^+_k (\tau, \tau') = \begin{cases}
\frac{1}{k+2} & \tau = \tau' \\
\frac{m (\tau \cup \tau')}{(k+2) m(\tau)} & \tau \cup \tau' \in X (k+1) \\
0 & \text{otherwise}
\end{cases}.$$
\end{definition}

\begin{definition}
For $0 \leq k \leq n$, the lower random walk on $k$-simplices is defined by the transition probability matrix $M^-_k : X (k) \times X (k) \rightarrow \mathbb{R}$:
$$M^-_k (\tau, \tau') = \begin{cases}
\sum_{\eta \in X (k-1)} \frac{m(\tau)}{(k+1) m (\eta)} & \tau = \tau' \\
\frac{m (\tau')}{(k+1) m(\tau \cap \tau')} & \tau \cap \tau' \in X (k-1) \\
0 & \text{otherwise}
\end{cases}.$$
\end{definition}

We leave it to the reader to check that those are in fact transition probability matrix, i.e., that for every $\tau$, $\sum_{\tau'} M^\pm_k (\tau,\tau') =1$. We note that both the random walks defined above are lazy in the sense that $M^\pm (\tau, \tau) \neq 0$. 

%\begin{comment}
In the case of the upper random walk, one can easily define a non lazy random walk as follows:
\begin{definition}
\label{non lazy walk def}
For $0 \leq k \leq n-1$, the non-lazy upper random walk on $k$-simplices is defined by the transition probability matrix $(M')^+_{k} : X (k) \times X (k) \rightarrow \mathbb{R}$:
$$(M')_{k}^+ = \frac{k+2}{k+1} \left( M^+_k - \frac{1}{k+2} I \right) = \frac{k+2}{k+1} M^+_k - \frac{1}{k+1} I .$$
\end{definition}
%\end{comment}

It is standard to view $M^{\pm}_k, (M')^+_{k}$ as averaging operators on $C^k (X,\mathbb{R})$ and we will not make the distinction between the transition probability matrix and the averaging operator it induces.

\begin{remark}
\label{M^-_0, M'^+_0 remark}
It is worth noting that $M^{-}_0$ and $(M')^+_{0}$ are familiar operators/matrices: $M^-_0$ is  a projection on the space of the constant functions (on vertices) with respect to the inner-product defined above, and $(M')^+_{0}$ is the weighted (non-lazy) random walk matrix of the $1$-skeleton of $X$, i.e., $\Delta^+_0 = I-(M')^+_{0}$ is the weighted normalized Laplacian on the $1$-skeleton (which was used in \cite{LocSpecI}).
\end{remark}

\begin{lemma}
\label{connection between d*d and M lemma}
For $0 \leq k \leq n-1$ and $\phi \in C^k (X,\mathbb{R})$, $d^*_k d_k \phi = (k+2) M^{+} \phi$ and $d_{k-1} d_{k-1}^* \phi= (k+1) M^{-} \phi$.
\end{lemma}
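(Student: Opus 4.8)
The plan is to verify both identities by direct computation, unwinding the definitions of $d_k$, $d_k^*$ (the latter via Lemma \ref{d* lemma}), and the transition matrices $M^+_k$ and $M^-_k$, and then comparing entrywise. Since all operators in sight are linear, it suffices to test against indicator functions, or equivalently to compare the $(\tau,\tau')$-entries of the operators $d^*_k d_k$ and $(k+2) M^+$ on $C^k(X,\mathbb{R})$, and of $d_{k-1} d_{k-1}^*$ and $(k+1) M^-$; I would phrase it as an evaluation of $(d^*_k d_k \phi)(\tau)$ and collect the coefficient of $\phi(\tau')$.

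For the first identity, I would fix $\tau \in X(k)$ and $\phi \in C^k(X,\mathbb{R})$ and compute
\begin{dmath*}
(d^*_k d_k \phi)(\tau) = \sum_{\sigma \in X(k+1), \tau \subset \sigma} \frac{m(\sigma)}{m(\tau)} (d_k \phi)(\sigma) = \sum_{\sigma \in X(k+1), \tau \subset \sigma} \frac{m(\sigma)}{m(\tau)} \sum_{\tau' \subset \sigma, \tau' \in X(k)} \phi(\tau').
\end{dmath*}
Now I would separate the inner sum into the term $\tau' = \tau$ and the terms $\tau' \neq \tau$. When $\tau' = \tau$, the coefficient is $\sum_{\sigma \supset \tau} m(\sigma)/m(\tau) = 1$ by the balanced weight condition. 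When $\tau' \neq \tau$ but $\tau, \tau'$ are both facets of some common $\sigma \in X(k+1)$, that $\sigma$ is forced to be $\tau \cup \tau'$ and is unique, so the coefficient of $\phi(\tau')$ is $m(\tau \cup \tau')/m(\tau)$. Comparing with the definition of $M^+_k$ — diagonal entry $\frac{1}{k+2}$ and off-diagonal entry $\frac{m(\tau \cup \tau')}{(k+2)m(\tau)}$ — gives exactly $d^*_k d_k = (k+2) M^+$.

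For the second identity, fix $\tau \in X(k)$ and compute $(d_{k-1} d_{k-1}^* \phi)(\tau) = \sum_{\eta \subset \tau, \eta \in X(k-1)} (d_{k-1}^* \phi)(\eta)$, and then expand $(d_{k-1}^* \phi)(\eta) = \sum_{\tau' \in X(k), \eta \subset \tau'} \frac{m(\tau')}{m(\eta)} \phi(\tau')$ using Lemma \ref{d* lemma}. Again I would split by whether $\tau' = \tau$: the diagonal contribution is $\sum_{\eta \subset \tau} \frac{m(\tau)}{m(\eta)}$, which matches $(k+1)$ times the diagonal entry of $M^-_k$; for $\tau' \neq \tau$, any common sub-face $\eta$ of dimension $k-1$ must equal $\tau \cap \tau'$ and is unique, contributing $\frac{m(\tau')}{m(\tau \cap \tau')}$, matching $(k+1)$ times the off-diagonal entry of $M^-_k$; and if $\tau \cap \tau' \notin X(k-1)$ there is no such $\eta$, giving $0$. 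This yields $d_{k-1} d_{k-1}^* = (k+1) M^-$.

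The computation is entirely routine; the only point requiring a moment of care — and the closest thing to an obstacle — is the uniqueness claims: that two distinct $k$-faces $\tau \neq \tau'$ contained in a common $(k+1)$-simplex determine that simplex as $\tau \cup \tau'$ uniquely (for the first identity), and dually that two distinct $k$-simplices sharing a $(k-1)$-face share only the face $\tau \cap \tau'$ (for the second). Both follow immediately from the fact that in a simplicial complex a face is determined by its vertex set and $|\tau| = |\tau'| = k+1$, $|\tau \cup \tau'| = k+2$, $|\tau \cap \tau'| = k$; I would state this explicitly so the coefficient bookkeeping is unambiguous.
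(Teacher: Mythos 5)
Your proposal is correct and follows essentially the same route as the paper's proof: expand $(d^*_k d_k\phi)(\tau)$ and $(d_{k-1}d_{k-1}^*\phi)(\tau)$ via Lemma \ref{d* lemma}, split the sum according to $\tau'=\tau$ versus $\tau'\neq\tau$, and match coefficients against the definitions of $M^+_k$ and $M^-_k$. The only cosmetic difference is that you flag the uniqueness of the common coface $\tau\cup\tau'$ and common subface $\tau\cap\tau'$ explicitly, which the paper leaves implicit.
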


\begin{proof}
Let $\phi \in C^k (X,\mathbb{R})$ and $\tau \in X (k)$, then
\begin{dmath*}
d^* d \phi (\tau) = \sum_{\sigma \in X (k+1), \tau \subset \sigma} \dfrac{m(\sigma)}{m(\tau)} d \phi (\sigma) =
\sum_{\sigma \in X (k+1), \tau \subset \sigma} \dfrac{m(\sigma)}{m(\tau)} \sum_{\tau' \in X (k), \tau' \subset \sigma} \phi (\tau') =
\sum_{\sigma \in X (k+1), \tau \subset \sigma} \dfrac{m(\sigma)}{m(\tau)} \sum_{\tau' \in X (k), \tau' \subset \sigma, \tau' \neq \tau} \phi (\tau') + \sum_{\sigma \in X (k+1), \tau \subset \sigma} \dfrac{m(\sigma)}{m(\tau)} \phi (\tau).
\end{dmath*}
Note that
$$\sum_{\sigma \in X (k+1), \tau \subset \sigma} \dfrac{m(\sigma)}{m(\tau)} \phi (\tau) = \dfrac{m(\tau)}{m(\tau)} \phi (\tau) = \phi (\tau).$$
Also note that
\begin{dmath*}
\sum_{\sigma \in X (k+1), \tau \subset \sigma} \dfrac{m(\sigma)}{m(\tau)} \sum_{\tau' \in X (k), \tau' \subset \sigma, \tau' \neq \tau} \phi (\tau')  =
\sum_{\sigma \in X (k+1), \tau \subset \sigma} \sum_{\tau' \in X (k), \tau' \subset \sigma, \tau' \neq \tau} \dfrac{m(\tau' \cup \tau)}{m(\tau)} \phi (\tau') =
\sum_{\tau' \in X (k), \tau \cup \tau' \in X (k+1)} \dfrac{m(\tau \cup \tau')}{m(\tau)} \phi (\tau').
\end{dmath*}
Therefore
$$d^* d \phi (\tau) = \phi (\tau) + \sum_{\tau' \in X (k), \tau \cup \tau' \in X (k+1)} \dfrac{m(\tau \cup \tau')}{m(\tau)} \phi (\tau') = (k+2) M^+ \phi (\tau).$$
Similarly,
\begin{dmath*}
d d^* \phi (\tau) = \sum_{\eta \in X (k-1), \eta \subset \tau} d^* \phi (\eta) =
\sum_{\eta \in X (k-1), \eta \subset \tau} \sum_{\tau' \in X (k), \eta \subset \tau'} \dfrac{m(\tau')}{m(\eta)} \phi (\tau') =
\sum_{\eta \in X (k-1), \eta \subset \tau} \sum_{\tau' \in X (k), \tau' \neq \tau, \eta \subset \tau'} \dfrac{m(\tau')}{m(\eta)} \phi (\tau') + \sum_{\eta \in X (k-1), \eta \subset \tau} \dfrac{m(\tau)}{m(\eta)} \phi (\tau) =
\sum_{\eta \in X (k-1), \eta \subset \tau} \sum_{\tau' \in X (k), \tau' \cap \tau = \eta} \dfrac{m(\tau')}{m(\tau \cap \tau')} \phi (\tau') + \sum_{\eta \in X (k-1), \eta \subset \tau} \dfrac{m(\tau)}{m(\eta)} \phi (\tau) =
\sum_{\tau' \in X (k), \tau \cap \tau' \in X (k-1)} \dfrac{m(\tau')}{m(\tau \cap \tau')} \phi (\tau') + \sum_{\eta \in X (k-1), \eta \subset \tau} \dfrac{m(\tau)}{m(\eta)} \phi (\tau) = (k+1) M^- \phi (\tau).
\end{dmath*}
\end{proof}

\begin{corollary}
\label{norm of d^* d, d d^* coro}
For every $0 \leq k \leq n-1$, $\Vert d_k^* d_k \Vert = k+2$ and $\Vert d_{k-1} d_{k-1}^* \Vert = k+1$.
\end{corollary}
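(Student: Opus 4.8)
The plan is to reduce the statement to Lemma~\ref{connection between d*d and M lemma}, which identifies $d_k^* d_k = (k+2) M^+_k$ and $d_{k-1} d_{k-1}^* = (k+1) M^-_k$ as scalar multiples of the random walk operators; it then suffices to show that $M^+_k$ and $M^-_k$ each have operator norm $1$ on $C^k (X,\mathbb{R})$. First I would note that $d_k^* d_k$ and $d_{k-1} d_{k-1}^*$ are self-adjoint and positive semi-definite, since $\langle d_k^* d_k \phi, \phi \rangle = \Vert d_k \phi \Vert^2 \geq 0$ and $\langle d_{k-1} d_{k-1}^* \phi, \phi \rangle = \Vert d_{k-1}^* \phi \Vert^2 \geq 0$. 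Hence $M^+_k$ and $M^-_k$ are self-adjoint with spectrum contained in $[0,\infty)$, so each has operator norm equal to its spectral radius.

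For the upper bound I would invoke the fact, observed just before Lemma~\ref{connection between d*d and M lemma}, that $M^+_k$ and $M^-_k$ are transition probability matrices: their entries are nonnegative and each row sums to $1$. A standard argument then bounds their spectral radius by $1$: if $M v = \mu v$ with $v \neq 0$, then $M^N v = \mu^N v$ for every $N$, while each power $M^N$ is again row-stochastic and hence does not increase the sup norm, so $\mu^N$ stays bounded and $|\mu| \leq 1$. Therefore $\Vert M^\pm_k \Vert \leq 1$, which gives $\Vert d_k^* d_k \Vert \leq k+2$ and $\Vert d_{k-1} d_{k-1}^* \Vert \leq k+1$.

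For the matching lower bound I would exhibit the constant function $\mathbbm{1}$ as an eigenvector of eigenvalue $1$ for both walks. From Lemma~\ref{d* lemma} and the balanced weight identity $m(\tau) = \sum_{\sigma \in X (k+1),\, \tau \subset \sigma} m(\sigma)$ one reads off $d_k^* \mathbbm{1} = \mathbbm{1}$ and likewise $d_{k-1}^* \mathbbm{1} = \mathbbm{1}$; on the other hand a $(k+1)$-simplex has exactly $k+2$ faces of dimension $k$ and a $k$-simplex has exactly $k+1$ faces of dimension $k-1$, so $d_k \mathbbm{1} = (k+2)\mathbbm{1}$ and $d_{k-1}\mathbbm{1} = (k+1)\mathbbm{1}$. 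Composing, $d_k^* d_k \mathbbm{1} = (k+2)\mathbbm{1}$ and $d_{k-1} d_{k-1}^* \mathbbm{1} = (k+1)\mathbbm{1}$, so $\Vert d_k^* d_k \Vert \geq k+2$ and $\Vert d_{k-1} d_{k-1}^* \Vert \geq k+1$, completing the argument. (Alternatively one can bypass Lemma~\ref{connection between d*d and M lemma} altogether: Cauchy--Schwarz over the $k+2$ $k$-faces of each $(k+1)$-simplex yields $\Vert d_k \phi \Vert^2 \leq (k+2)\Vert \phi \Vert^2$, with equality at $\phi = \mathbbm{1}$ by Proposition~\ref{weight of X^k compared to X^l}, hence $\Vert d_k \Vert^2 = k+2$, and then $\Vert d_k^* d_k \Vert = \Vert d_k \Vert^2 = k+2$ and $\Vert d_{k-1} d_{k-1}^* \Vert = \Vert d_{k-1} \Vert^2 = k+1$.)

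There is no genuine difficulty here; the only point requiring mild care is that the operator norm is taken with respect to the weighted $\ell^2$ inner product and not the sup norm, which is why the bound $\Vert M^\pm_k \Vert \leq 1$ is argued through self-adjointness (norm equals spectral radius) together with the elementary spectral-radius bound for row-stochastic matrices, rather than directly from stochasticity.
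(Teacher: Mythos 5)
Your proof is correct and takes the same route the paper implicitly relies on: the corollary follows from Lemma~\ref{connection between d*d and M lemma} once one observes that $M^\pm_k$ are self-adjoint (being scalar multiples of $d^*_k d_k$, $d_{k-1}d^*_{k-1}$) row-stochastic operators, hence of norm exactly $1$. The paper states the corollary without proof precisely because of this immediacy, and you have simply spelled out the details, including the constant-function eigenvector giving the lower bound.
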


\section{Non-trivial spectrum and random walks in the partite case}
\label{Non-trivial spectrum and random walks in the partite case sec}

In \cite{LocSpecI}[Section 5.2], we analyzed the specturum of the Laplacian on the $1$-skeleton of an $n$-dimensional $(n+1)$-partite complex. Below, we recall the definitions and results of \cite{LocSpecI}[Section 5.2] using the terminology of random walks (instead of Laplacians - see Remark \ref{M^-_0, M'^+_0 remark}). First, we note that by \cite{LocSpecI}[Proposition 5.2], 
$$\varphi_i (\lbrace v \rbrace) = \begin{cases}
n & v \in S_i \\
-1 & \text{otherwise}
\end{cases}.$$
is an eigenfunction of $(M')^{+}_{0}$ with eigenvalue $-\frac{1}{n}$. Also, as in every graph, the constant function $\mathbbm{1}$ is an eigenfunction with the eigenvalue $1$. Therefore, as in \cite{LocSpecI}, we denote $C_{nt}^0 (X, \mathbb{R}) = \Span  \lbrace \mathbbm{1}, \varphi_0,..., \varphi_n \rbrace^\perp$ to be the subspace of $C^0 (X, \mathbb{R})$ of the non-trivial eigenfunctions of $(M')^{+}_{0}$. By \cite{LocSpecI}[Proposition 5.3],  $C_{nt}^0 (X, \mathbb{R}) = \Span  \lbrace \chi_{S_0},..., \chi_{S_n} \rbrace^\perp$, where $\chi_{S_i}$ is the indicator function on $S_i$. Denote
$M^{-,p}_0$ to be the orthogonal projection on $\Span  \lbrace \chi_{S_0},..., \chi_{S_n} \rbrace$, explicitly, for every $\phi \in C^0 (X, \mathbb{R})$, 
$$M^{-,p}_0 \phi =  \sum_{i=0}^n \left( \sum_{v \in S_i}  \dfrac{m(\lbrace v \rbrace)}{m (S_i)} \phi (\lbrace v \rbrace) \right) \chi_{S_i}.$$
Then $M^{-,p}_0$ commutes with $(M')^{+}_{0}$ (because it is a projection on a subspace spanned by eigenfunctions of $(M')^{+}_{0}$) and $I-M^{-,p}_0$ is the orthogonal projection on $C_{nt}^0 (X, \mathbb{R})$. Last, we recall that the spectrum of $(M')^{+}_{0}$ in $C_{nt}^0 (X, \mathbb{R})$ has the following symmetry:
\begin{proposition}\cite{LocSpecI}[Lemma 5.5]
\label{symmetry of spectrum in partite case - prop}
Let $X$ be a pure $n$-dimensional, $(n+1)$-partite simplicial complex such that the one-skeleton of $X$ is connected. Denote
$$\lambda (X) = \max \lbrace \lambda : \exists \phi \in C_{nt}^0 (X, \mathbb{R}), (M')^{+}_{0} \phi = \lambda \phi \rbrace,$$
 $$\kappa_{partite} (X) = \min \lbrace \kappa : \exists \phi \in C_{nt}^0 (X, \mathbb{R}), (M')^{+}_{0} \phi = \kappa \phi \rbrace,$$
then $- n \lambda (X) \leq \kappa_{partite} (X) \leq -\frac{1}{n} \lambda (X)$.
\end{proposition}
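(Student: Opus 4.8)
The plan is to reduce the statement to a simple fact of linear algebra: a real symmetric $(n+1)\times(n+1)$ matrix with vanishing diagonal that has $\mu$ among its eigenvalues must also have an eigenvalue $\leq -\mu/n$ and an eigenvalue $\geq -\mu/n$; indeed its eigenvalues sum to its trace $0$, so the $n$ remaining eigenvalues (counted with multiplicity) sum to $-\mu$, whence their minimum is $\leq -\mu/n$ and their maximum is $\geq -\mu/n$. Concretely, from an eigenfunction of $M := (M')^{+}_{0}$ on $C^0_{nt}(X,\mathbb{R})$ with eigenvalue $\mu$ I would build such a matrix, and then turn its extreme eigenvalues back into test functions for the Rayleigh quotient of $M$.

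First I would collect the structural input. By Lemma \ref{connection between d*d and M lemma}, $M = d_0^* d_0 - I$, so $M$ is self-adjoint on $C^0(X,\mathbb{R})$. Since $X$ is pure and $(n+1)$-partite, every edge of the $1$-skeleton lies in an $n$-simplex and so joins vertices in two distinct sides; in particular there are no edges inside a single $S_i$, and therefore $M$ maps any function supported on $S_i$ to a function vanishing on $S_i$. By \cite{LocSpecI}[Proposition 5.3] one has the orthogonal decomposition $C^0_{nt}(X,\mathbb{R}) = \bigoplus_{i=0}^n W_i$ with $W_i = \lbrace \phi : \supp \phi \subseteq S_i,\ \langle \phi, \chi_{S_i} \rangle = 0 \rbrace$, so each $\phi \in C^0_{nt}(X,\mathbb{R})$ is uniquely $\phi = \sum_i \phi_i$ with $\phi_i = \phi \cdot \chi_{S_i} \in W_i$.

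Next, fixing an eigenfunction $\phi = \sum_i \phi_i$ with $M\phi = \mu \phi$ and writing $d_i = \Vert \phi_i \Vert^2$, I would introduce the $(n+1) \times (n+1)$ matrix $B$ defined by $B_{ij} = \langle M \phi_j, \phi_i \rangle$. It is symmetric by self-adjointness of $M$; its diagonal vanishes by the intra-side observation; and pairing $M\phi = \mu\phi$ with $\phi_i$ (using that the $\phi_i$ have pairwise disjoint supports) gives $\sum_j B_{ij} = \mu d_i$ for every $i$. Assuming first that all $d_i > 0$, the conjugate $\widetilde{B} = D^{-1/2} B D^{-1/2}$, $D = \operatorname{diag}(d_0,\dots,d_n)$, is symmetric with zero trace and has the positive vector $(\sqrt{d_i})_i$ as a $\mu$-eigenvector, so by the linear-algebra fact it possesses eigenvalues $\leq -\mu/n$ and $\geq -\mu/n$. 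The computation I would then carry out is the dictionary: for any $b \in \mathbb{R}^{n+1}$ the function $\psi_b = \sum_i (b_i / \sqrt{d_i}) \phi_i$ lies in $C^0_{nt}(X,\mathbb{R})$, is nonzero for $b \neq 0$, and satisfies $\langle M \psi_b, \psi_b \rangle = b^{T} \widetilde{B} b$ and $\Vert \psi_b \Vert^2 = \Vert b \Vert^2$; thus each eigenvector of $\widetilde{B}$ produces a function in $C^0_{nt}(X,\mathbb{R})$ whose Rayleigh quotient for $M$ equals the corresponding eigenvalue. The two inequalities now follow from the min-max characterization of $\lambda(X)$ and $\kappa_{partite}(X)$: applying the construction to an eigenfunction realizing $\mu = \lambda(X)$ yields a function with $\langle M\psi,\psi \rangle \leq -\tfrac1n \lambda(X) \Vert \psi \Vert^2$, so $\kappa_{partite}(X) \leq -\tfrac1n \lambda(X)$; applying it to one realizing $\mu = \kappa_{partite}(X)$ yields $\lambda(X) \geq -\tfrac1n \kappa_{partite}(X)$, i.e.\ $\kappa_{partite}(X) \geq -n\lambda(X)$.

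The step I expect to be the main obstacle is the degenerate case in which $\phi$ vanishes on some sides, so that $D^{1/2}$ is not invertible. Here I would instead run the matrix argument on the principal submatrix of $B$ indexed by $I = \lbrace i : d_i > 0 \rbrace$, which is again symmetric with vanishing diagonal and row sums $\mu d_i$. One checks that $\lvert I \rvert \geq 2$ automatically whenever $\mu \neq 0$ — if $\lvert I \rvert = 1$ the submatrix is the $1 \times 1$ zero matrix, forcing $\mu = 0$ — so the argument goes through with $n$ replaced by $\lvert I \rvert - 1 \leq n$, producing an eigenvalue bound only stronger than the one claimed. The remaining possibility is that the relevant extremal eigenvalue is $0$, in which case the asserted inequality is immediate from $\kappa_{partite}(X) \leq \lambda(X)$. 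Connectedness of the $1$-skeleton is used only to invoke \cite{LocSpecI}[Proposition 5.3].
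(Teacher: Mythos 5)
Your proof is correct. This proposition is cited from \cite{LocSpecI} (Lemma 5.5) without a proof in the present paper, so there is no in-paper argument to compare against; but the mechanism you use --- decomposing an eigenfunction along the sides $S_i$, compressing $(M')^+_0$ to a symmetric trace-zero $(n+1)\times(n+1)$ matrix $\widetilde B$ whose vanishing diagonal encodes the fact that the partite structure forces $(M')^+_0$ to carry functions supported on $S_i$ to functions vanishing on $S_i$, and then transporting eigenvalues of $\widetilde B$ back to Rayleigh quotients on $C^0_{nt}(X,\mathbb{R})$ via the isometry $b\mapsto\sum_i (b_i/\sqrt{d_i})\phi_i$ --- is sound and is the natural one for this kind of multipartite spectral-symmetry statement. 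Two small points are worth making explicit. First, the Rayleigh-quotient step needs $C^0_{nt}(X,\mathbb{R})$ to be $(M')^+_0$-invariant; this holds because its orthogonal complement $\Span\{\chi_{S_0},\dots,\chi_{S_n}\}$ is spanned by eigenvectors of the self-adjoint operator $(M')^+_0$. Second, in your degenerate discussion, the phrase ``producing an eigenvalue bound only stronger than the one claimed'' is accurate only when $\mu$ has the expected sign (nonnegative when $\mu=\lambda(X)$, nonpositive when $\mu=\kappa_{partite}(X)$); for the opposite sign the smaller-matrix bound $-\mu/(\lvert I\rvert-1)$ is actually weaker than $-\mu/n$. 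In that regime, however, the desired inequality is already trivial from $\kappa_{partite}(X)\le\lambda(X)$, and in fact the opposite sign never occurs: the trace of $(M')^+_0$ on $\Span\{\chi_{S_0},\dots,\chi_{S_n}\}$ is $1+n\cdot(-1/n)=0$, and $(M')^+_0$ has zero diagonal on all of $C^0(X,\mathbb{R})$, so its trace on $C^0_{nt}(X,\mathbb{R})$ is $0$ and hence $\lambda(X)\ge 0\ge\kappa_{partite}(X)$ whenever $C^0_{nt}(X,\mathbb{R})\ne\{0\}$.
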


Last, we will need the following additional lemmata:
\begin{lemma}
Let $X$ be a pure $n$-dimensional, $(n+1)$-partite simplicial complex with sides $S_0,...,S_n$. Let $0 \leq i \leq n$ and $\phi \in C^0 (X, \mathbb{R})$ such that $\supp (\phi) \subseteq S_i$, then
$$M^{-,p}_0 \phi (\lbrace v \rbrace) = \begin{cases}
0 & v \notin S_0 \\
(n+1) M_0^{-} \phi & v \in S_0
\end{cases}.$$
\end{lemma}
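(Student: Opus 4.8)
The plan is to evaluate both sides of the identity by direct computation, since each is governed by an explicit formula. As in the proof of Proposition \ref{weight in partite complex prop}, we may relabel the sides so that $i=0$. The support hypothesis then collapses the defining formula for $M^{-,p}_0$ to a single term: for $j\neq 0$ the coefficient $\sum_{v\in S_j}\frac{m(\{v\})}{m(S_j)}\phi(\{v\})$ vanishes because $\supp(\phi)\subseteq S_0$, so $M^{-,p}_0\phi = c\,\chi_{S_0}$ with $c=\frac{1}{m(S_0)}\sum_{v\in S_0}m(\{v\})\phi(\{v\})$. In particular $M^{-,p}_0\phi$ vanishes outside $S_0$, which is the first case.

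Next I would identify $M^{-}_0\phi$, i.e.\ the lower walk in the lowest degree. One can expand the definition of $M^{-}_0$ directly, using that $X(-1)=\{\emptyset\}$ and that balancedness of $m$ gives $m(\emptyset)=\sum_{\{v\}\in X(0)}m(\{v\})=m(X(0))$; alternatively, and more cleanly, apply Lemma \ref{connection between d*d and M lemma} with $k=0$ to write $M^{-}_0\phi = d_{-1}d_{-1}^*\phi$ and compute $d_{-1}$ and $d_{-1}^*$ with Lemma \ref{d* lemma}. Either route shows that $M^{-}_0\phi$ is the constant function equal to the weighted average $\frac{1}{m(X(0))}\sum_{\{w\}\in X(0)}m(\{w\})\phi(\{w\})$ (the observation recorded in Remark \ref{M^-_0, M'^+_0 remark}), and by $\supp(\phi)\subseteq S_0$ this average is $\frac{1}{m(X(0))}\sum_{v\in S_0}m(\{v\})\phi(\{v\})$.

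Finally I would combine the two computations: by Corollary \ref{weight of S_i coro} we have $m(S_0)=\frac{1}{n+1}m(X(0))$, whence
$$c=\frac{1}{m(S_0)}\sum_{v\in S_0}m(\{v\})\phi(\{v\})=\frac{n+1}{m(X(0))}\sum_{v\in S_0}m(\{v\})\phi(\{v\})=(n+1)\,M^{-}_0\phi,$$
so $M^{-,p}_0\phi(\{v\})=(n+1)M^{-}_0\phi(\{v\})$ for $v\in S_0$, which is the second case. The argument is pure bookkeeping with the weight normalizations, so I do not anticipate a genuine obstacle; the only points demanding care are the evaluation $m(\emptyset)=m(X(0))$, the $k=0$ specialization of the lower walk, and the extraction of the factor $n+1$ from Corollary \ref{weight of S_i coro}.
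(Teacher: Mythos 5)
Your proof is correct and follows essentially the same route as the paper's: reduce to $i=0$, observe that the sum defining $M^{-,p}_0\phi$ collapses to the $S_0$ term, identify $M^-_0\phi$ as the global weighted average (which by the support hypothesis equals the sum over $S_0$), and then use $m(S_0)=\tfrac{1}{n+1}m(X(0))$ from Corollary \ref{weight of S_i coro} to extract the factor $n+1$. The paper carries out exactly this computation case by case on $v$, so no substantive difference.
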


\begin{proof}
Without loss of generality, we can assume $i=0$ and $\supp (\phi) \subseteq S_0$. If $v \in S_{i_0}$, $i_0 \neq 0$, then
$$M^{-,p}_0 \phi (\lbrace v \rbrace) =    \sum_{u \in S_{i_0}}  \dfrac{m(\lbrace u \rbrace)}{m (S_i)} \phi (\lbrace u \rbrace) = 0.$$
If $v \in S_0$, recall that by Corollary \ref{weight of S_i coro}, $m(S_0) = \frac{1}{n+1} m(X(0))$. Therefore 
\begin{dmath*}
M^{-,p}_0 \phi (\lbrace v \rbrace) =  
\sum_{u \in S_{0}}  \dfrac{m(\lbrace u \rbrace)}{m (S_0)} \phi (\lbrace u \rbrace) = 
(n+1)\sum_{u \in S_{0}}  \dfrac{m(\lbrace u \rbrace)}{m (X(0))} \phi (\lbrace u \rbrace) =
(n+1)\sum_{u \in X(0)}  \dfrac{m(\lbrace u \rbrace)}{m (X(0))} \phi (\lbrace u \rbrace) =  
(n+1) M_0^{-} \phi,
\end{dmath*}
as needed. 
\end{proof}

\begin{lemma}
\label{Lemma M' on M^-,p}
Let $X$ be a pure $n$-dimensional, $(n+1)$-partite simplicial complex with sides $S_0,...,S_n$. Let $0 \leq i \leq n$ and $\phi \in C^0 (X, \mathbb{R})$ such that $\supp (\phi) \subseteq S_i$, then
$$(M')_{0}^+ M^{-,p}_0 \phi (\lbrace v \rbrace) = \begin{cases}
0 & v \in S_i \\
\frac{n+1}{n} M_0^{-} \phi & v \notin S_i 
\end{cases}.$$ 
\end{lemma}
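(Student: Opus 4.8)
The plan is to combine the previous lemma (computing $M^{-,p}_0 \phi$ explicitly) with the structure of $(M')^+_0$ as an averaging operator on the $1$-skeleton. Without loss of generality take $i = 0$, so $\supp(\phi) \subseteq S_0$. By the previous lemma, $M^{-,p}_0 \phi$ is supported on $S_0$ and equals the constant $c := (n+1) M^{-}_0 \phi$ there; in other words $M^{-,p}_0 \phi = c\, \chi_{S_0}$. So the task reduces to computing $(M')^+_0 \chi_{S_0}$ and evaluating it at a vertex $v$, then multiplying by $c$.

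The key computation is therefore $(M')^+_0 \chi_{S_0}(\{v\})$. First I would recall from Remark \ref{M^-_0, M'^+_0 remark} that $(M')^+_0$ is the non-lazy weighted random walk on the $1$-skeleton, and from Definition \ref{non lazy walk def} that $(M')^+_0 = 2 M^+_0 - I$. If $v \in S_0$, then every neighbour of $v$ lies outside $S_0$ (all edges of an $(n+1)$-partite complex join distinct sides), so $M^+_0 \chi_{S_0}(\{v\}) = \frac{1}{2}$ (only the lazy/diagonal term survives), giving $(M')^+_0 \chi_{S_0}(\{v\}) = 0$. If $v \notin S_0$, say $v \in S_{i_0}$ with $i_0 \neq 0$, then the diagonal term contributes $0$ and I need the total transition weight from $v$ into $S_0$, namely
$$\sum_{u \in S_0,\ \{u,v\} \in X(1)} \frac{m(\{u,v\})}{m(\{v\})}.$$
This is exactly the quantity controlled by Proposition \ref{weight in partite complex prop} applied with $k = 0$, $\tau = \{v\}$, and the side $S_0$: since $\tau \cap S_0 = \emptyset$, that sum equals $\frac{1}{n-0}\, m(\{v\}) = \frac{1}{n} m(\{v\})$, so the transition weight into $S_0$ is $\frac{1}{n}$. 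Hence $M^+_0 \chi_{S_0}(\{v\}) = \frac{1}{n}$ and $(M')^+_0 \chi_{S_0}(\{v\}) = \frac{2}{n} \cdot \frac{1}{2} \cdot \frac{n+1}{?}$ — more carefully, $(M')^+_0 = 2M^+_0 - I$ gives $(M')^+_0 \chi_{S_0}(\{v\}) = 2 \cdot \frac{1}{n}\cdot\frac12 \cdot (n+1)\ldots$ wait: I must track that $M^+_0(\{v\},\cdot)$ already has the factor $\frac{1}{k+2} = \frac12$, so $M^+_0\chi_{S_0}(\{v\}) = \sum_{u \in S_0,\{u,v\}\in X(1)} \frac{m(\{u,v\})}{2m(\{v\})} = \frac{1}{2n}$, whence $(M')^+_0\chi_{S_0}(\{v\}) = \frac{1}{n}$. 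Multiplying by $c = (n+1)M^{-}_0\phi$ yields $(M')^+_0 M^{-,p}_0\phi(\{v\}) = \frac{n+1}{n} M^{-}_0 \phi$ for $v \notin S_0$ and $0$ for $v \in S_0$, which is the claim.

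There is no real obstacle here — the lemma is a bookkeeping consequence of Proposition \ref{weight in partite complex prop} and the previous lemma. The one point that needs care is getting the normalizing constants right: keeping straight the factor $\frac{1}{k+2}$ built into $M^+_k$, the factor $n+1$ coming from $m(S_0) = \frac{1}{n+1}m(X(0))$ (Corollary \ref{weight of S_i coro}) inside the previous lemma, and the factor $\frac{1}{n}$ from the $\tau \cap S_0 = \emptyset$ case of Proposition \ref{weight in partite complex prop}. Once the identity $M^{-,p}_0\phi = (n+1)(M^{-}_0\phi)\chi_{S_0}$ is in hand, everything else is the elementary evaluation of $(M')^+_0$ on an indicator of a side.
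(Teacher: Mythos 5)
Your proof is correct and takes essentially the same approach as the paper: reduce via the previous lemma to evaluating $(M')^+_0$ on a multiple of $\chi_{S_0}$, split into the cases $v \in S_0$ and $v \notin S_0$, and invoke Proposition \ref{weight in partite complex prop} to compute the outgoing transition weight into $S_0$ as $\frac{1}{n}$. The only cosmetic difference is that you pass through the lazy walk $M^+_0$ and the identity $(M')^+_0 = 2M^+_0 - I$ (with a brief false start that you immediately correct), whereas the paper uses the explicit off-diagonal formula for $(M')^+_0$ directly; this is the same argument.
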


\begin{proof}
If $v \in S_0$, then for every $u \in X(0)$, if $\lbrace u,v \rbrace \in X(1)$, then $u \in S_1 \cup S_2 \cup ... \cup S_n$ and by the previous lemma, $M^{-,p}_0 \phi (\lbrace u \rbrace) =0$. It follows that $(M')_{0}^+ M^{-,p}_0 \phi (\lbrace v \rbrace) = 0$. 
 
If $v \notin S_0$, then, by the previous lemma,
\begin{dmath*}
(M')_{0}^+ M^{-,p}_0 \phi (\lbrace v \rbrace) = 
\sum_{u \in X(0), \lbrace u, v \rbrace \in X(1)} \dfrac{m(\lbrace u, v \rbrace)}{m(\lbrace v \rbrace)}   M^{-,p}_0 \phi (\lbrace u \rbrace) = 
\sum_{u \in S_0, \lbrace u, v \rbrace \in X(1)} \dfrac{m(\lbrace u, v \rbrace)}{ m(\lbrace v \rbrace)}   ((n+1) M^{-}_0 \phi)  = 
(n+1)(M^{-}_0 \phi) \sum_{u \in S_0, \lbrace u, v \rbrace \in X(1)} \dfrac{m(\lbrace u, v \rbrace)}{m(\lbrace v \rbrace)} = \dfrac{n+1}{n} (M^{-}_0 \phi),
\end{dmath*}
where the last equality is due to Proposition \ref{weight in partite complex prop}.

\end{proof}

\section{Links and localization}
\label{Links and localization sec}

This section discusses the idea of links and localization and Garland's method in our setting. Although this is well known (and was already discussed in \cite{LocSpecI}), we are working in a slightly different setting (e.g., random walks instead of Laplacians) and need slightly more general results and therefore we state and prove all the results below.

Let $X$ be a pure $n$-dimensional finite simplicial complex with a weight function $m$. Recall that for $-1 \leq k \leq n-1$, $\tau \in X (k)$, the link of $\tau$, denoted $X_\tau$, is a pure $(n-k-1)$-simplicial complex defined as:
$$\eta \in X_\tau (l) \Leftrightarrow \eta \in X (l) \text{ and } \tau \cup \eta \in X (k+l+1).$$

For $\tau \in X(k)$, $-1 \leq k \leq n-2$, we will say that $X_\tau$ is connected if the one-skeleton of $X$ is connected. Below, we will always assume that for every  $-1 \leq k \leq n-2$ and every $\tau \in X(k)$, $X_\tau$ is connected (note that this also implies that the one-skeleton of $X$ itself is connected).

On $X_\tau$ we define the weight function $m_\tau$ induced by $m$ as $m_{\tau} (\eta) = m (\tau \cup \eta)$.
Using this weight function the inner-product and the norm on $C^l (X_\tau, \mathbb{R})$ are defined as above. The operators $M^\pm_{\tau,l}, (M')^+_{\tau,l}$ and $d_\tau, d^*_\tau$ are also defined on $C^l (X_\tau, \mathbb{R})$ as above.

Given a cochain $\phi \in C^l (X, \mathbb{R})$ and a simplex $\tau \in X (k)$ with $-1 \leq k < l$, we define the localization of $\phi$ on $X_\tau$, denoted $\phi_\tau$ as a cochain $\phi_\tau \in C^{l-k-1} (X_\tau, \mathbb{R})$ defined as
$$\phi_\tau (\eta) = \phi (\tau \cup \eta).$$

The key observation called Gralnad's method (which was initially due to Garland \cite{Garland}, but is now considered standard - see \cite{BS}, \cite{GW}) is that inner-products of cochains and their differentials can be computed via their localizations. In our setting, the Garland's method results we will use below are summarized in the following proposition:
\begin{proposition}
\label{localization proposition}
Let $0 \leq k \leq n$ and let $\phi, \psi \in C^k (X,\mathbb{R})$, then
\begin{itemize}
\item $(k+1) \langle \phi, \psi \rangle = \sum_{\tau \in X (k-1)} \langle \phi_\tau , \psi_\tau \rangle$.
\item For $0 \leq k$,
$\langle d^* \phi, d^* \psi \rangle = \sum_{\tau \in X (k-1)} \langle d^*_\tau \phi_\tau , d^*_\tau \psi_\tau \rangle .$
\item For $0 \leq k<n$, 
$\langle d \phi, d \psi \rangle = \sum_{\tau \in X (k-1)} \left( \langle d_\tau \phi_\tau , d_\tau \psi_\tau \rangle - \dfrac{k}{k+1} \langle \phi_\tau , \psi_\tau \rangle \right).$

\end{itemize}

\end{proposition}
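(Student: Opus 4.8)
The plan is to prove the three identities in the order listed, each by a direct double-counting argument that re-expresses a sum over $X(k)$ (or $X(k+1)$) as a sum over $X(k-1)$ of local data, using the balanced weight relation $m(\sigma) = \sum_{\tau \subset \sigma} m(\sigma')$ and the explicit formula $m_\tau(\eta) = m(\tau \cup \eta)$ for the link weights. For the first identity, I would expand the right-hand side as $\sum_{\tau \in X(k-1)} \sum_{\eta \in X_\tau(0)} m_\tau(\eta) \phi_\tau(\eta) \psi_\tau(\eta) = \sum_{\tau \in X(k-1)} \sum_{\sigma \in X(k),\, \tau \subset \sigma} m(\sigma) \phi(\sigma)\psi(\sigma)$, and then swap the order of summation: each $\sigma \in X(k)$ contains exactly $k+1$ faces $\tau \in X(k-1)$, so the double sum collapses to $(k+1)\langle \phi,\psi\rangle$. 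This is the model for the whole proposition.

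For the second identity I would use Lemma \ref{d* lemma} in the link: $d^*_\tau \phi_\tau(\eta) = \sum_{\sigma' \in X_\tau(0),\, \eta \subset \sigma'} \frac{m_\tau(\sigma')}{m_\tau(\eta)} \phi_\tau(\sigma')$ for $\eta \in X_\tau(-1)$, i.e.\ $\eta = \emptyset$, so $\langle d^*_\tau \phi_\tau, d^*_\tau \psi_\tau\rangle$ is a single term $m_\tau(\emptyset)\, (d^*_\tau\phi_\tau)(\emptyset)(d^*_\tau\psi_\tau)(\emptyset)$ when $k=0$; for general $k$, I would instead combine the first identity applied to $d^*\phi, d^*\psi \in C^{k-1}(X,\mathbb{R})$ with the observation that localization commutes appropriately with $d^*$ — more precisely that $(d^*\phi)_\tau = d^*_\tau \phi_\tau$ for $\tau \in X(k-1)$ (when $\phi \in C^k(X,\mathbb R)$ this is checked directly from the formula in Lemma \ref{d* lemma}, since the weights $m(\sigma)/m(\tau\cup\eta)$ appearing in $d^*\phi(\tau\cup\eta)$ match the link weights $m_\tau(\sigma\setminus\tau)/m_\tau(\eta)$). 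Then $\langle d^*\phi, d^*\psi\rangle = \frac{1}{k}\sum_{\tau\in X(k-2)} \langle (d^*\phi)_\tau,(d^*\psi)_\tau\rangle$ — wait, the indices must be handled carefully: the cleanest route is to apply the first bullet with $k$ replaced by $k-1$ only if the statement is about $C^{k-1}$, so I would instead argue directly that $\langle d^*\phi, d^*\psi\rangle = \sum_{\tau\in X(k-1)} m(\tau)^{-1}(\dots)$ unwinds term-by-term into $\sum_{\tau\in X(k-1)}\langle d^*_\tau\phi_\tau, d^*_\tau\psi_\tau\rangle$ after expanding both $d^*$'s and grouping by the common face $\tau$ of the two $(k+1)$-simplices involved.

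For the third identity I would expand $\langle d\phi, d\psi\rangle = \sum_{\sigma\in X(k+1)} m(\sigma)\big(\sum_{\tau\subset\sigma}\phi(\tau)\big)\big(\sum_{\tau'\subset\sigma}\psi(\tau')\big)$, split the double inner sum into diagonal terms ($\tau=\tau'$) and off-diagonal terms, and reorganize by fixing a common codimension-$2$ face $\eta\in X(k-1)$. The off-diagonal terms, grouped by $\eta = \tau\cap\tau'$, reassemble into $\sum_{\eta\in X(k-1)}\langle d_\eta\phi_\eta, d_\eta\psi_\eta\rangle$ minus the diagonal contribution inside each link; the diagonal terms, counted with multiplicity (each pair $(\sigma,\tau)$ with $\tau\subset\sigma$, $\tau\in X(k)$, corresponds to choosing $\eta\subset\tau$), produce a term proportional to $\sum_{\eta\in X(k-1)}\langle\phi_\eta,\psi_\eta\rangle$, and the bookkeeping of multiplicities is exactly what yields the coefficient $-\frac{k}{k+1}$ (one subtracts $\langle\phi_\eta,\psi_\eta\rangle$-type terms once per link but they are overcounted relative to the global diagonal by a factor controlled by the first identity). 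The main obstacle is this last combinatorial accounting: one must be scrupulous about which simplices are being summed over, the multiplicities with which each global term appears when regrouped by links, and the distinction between $d_\eta d^*_\eta$-type diagonal corrections inside a link versus the genuine off-diagonal terms. I expect to verify the coefficient $\frac{k}{k+1}$ by a clean application of the first bullet of the proposition to absorb the diagonal overcount, rather than by brute force; everything else is the routine weight-shuffling modeled on the proof of Proposition \ref{weight of X^k compared to X^l} and Lemma \ref{d* lemma}.
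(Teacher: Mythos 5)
Your plan for the first and third bullets matches the paper's proof: for the first, a direct weight-shuffle counting the $k+1$ codimension-$1$ faces of each $\sigma\in X(k)$; for the third, expand $(d\phi(\sigma))(d\psi(\sigma))$, split diagonal from cross terms, reorganize by a common codimension-$2$ face $\tau=\eta\cap\eta'$, and absorb the diagonal overcount via the first bullet, which is exactly where the coefficient $-\frac{k}{k+1}$ comes from. So those two are sound.

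The second bullet is where you lose the thread, and it is a genuine gap in the proposal. You correctly observe that $d^*_\tau\phi_\tau$ lives in $C^{-1}(X_\tau,\mathbb{R})$, which is supported on the single element $\emptyset$, so $\langle d^*_\tau\phi_\tau,\, d^*_\tau\psi_\tau\rangle = m_\tau(\emptyset)\,\big(d^*_\tau\phi_\tau(\emptyset)\big)\big(d^*_\tau\psi_\tau(\emptyset)\big)$; but you then restrict this to ``$k=0$'' and go hunting for something else for general $k$. The observation is not special to $k=0$: for any $k\geq 0$ and any $\tau\in X(k-1)$, if $\phi\in C^k(X,\mathbb{R})$ then $\phi_\tau\in C^0(X_\tau,\mathbb{R})$ and $d^*_\tau\phi_\tau\in C^{-1}(X_\tau,\mathbb{R})$, so the link inner product is \emph{always} a single term. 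Once you see this, the identity follows in one line from $d^*\phi(\tau) = d^*_\tau\phi_\tau(\emptyset)$ (which you can check directly from Lemma~\ref{d* lemma}, since $m_\tau(\{v\})/m_\tau(\emptyset) = m(\tau\cup\{v\})/m(\tau)$) together with $m(\tau)=m_\tau(\emptyset)$:
\[
\langle d^*\phi, d^*\psi\rangle \;=\; \sum_{\tau\in X(k-1)} m(\tau)\,(d^*\phi(\tau))(d^*\psi(\tau)) \;=\; \sum_{\tau\in X(k-1)} m_\tau(\emptyset)\,\big(d^*_\tau\phi_\tau(\emptyset)\big)\big(d^*_\tau\psi_\tau(\emptyset)\big) \;=\; \sum_{\tau\in X(k-1)}\langle d^*_\tau\phi_\tau, d^*_\tau\psi_\tau\rangle.
\]
Your detour via ``$(d^*\phi)_\tau = d^*_\tau\phi_\tau$'' cannot be made sense of as stated, since $d^*\phi\in C^{k-1}(X,\mathbb{R})$ and $\tau\in X(k-1)$ have the same dimension, so there is no localization; and the later remark about ``grouping by the common face of the two $(k+1)$-simplices'' is misindexed. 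The repair is exactly the realization above: the second bullet is the \emph{easiest} of the three because the link inner product degenerates to a single term for every $k$.
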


\begin{proof}
Let $\phi, \psi \in C^k (X,\mathbb{R})$, then
\begin{dmath*}
\sum_{\tau \in X (k-1)} \langle \phi_\tau , \psi_\tau \rangle =
\sum_{\tau \in X (k-1)} \sum_{\lbrace v \rbrace \in X_\tau (0)} m_\tau (\lbrace v \rbrace) \phi_\tau (\lbrace v \rbrace) \psi_\tau (\lbrace v \rbrace) = \\
\sum_{\tau \in X (k-1)} \sum_{\lbrace v \rbrace \in X_\tau (0)} m (\tau \cup \lbrace v \rbrace) \phi (\tau \cup \lbrace v \rbrace) \psi (\tau \cup \lbrace v \rbrace) =
\sum_{\tau \in X (k-1)} \sum_{\sigma \in X (k), \tau \subset \sigma} m (\sigma) \phi (\sigma) \psi (\sigma) = \\
\sum_{\sigma \in X (k)} \sum_{\tau \in X (k-1), \tau \subset \sigma} m (\sigma) \phi (\sigma) \psi (\sigma) =
(k+1) \sum_{\sigma \in X (k)} m (\sigma) \phi (\sigma) \psi (\sigma) =
(k+1) \langle \phi , \psi \rangle.
\end{dmath*}
In order to prove the second equality, we notice that for $0 \leq k$ and every $\tau \in X (k-1)$, it holds by definition that $X_\tau (-1) = \lbrace \emptyset \rbrace$. Therefore 
\begin{dmath*}
d^* \phi (\tau) = \sum_{\sigma \in X (k), \tau \subset \sigma} \dfrac{m(\sigma)}{m(\tau)} \phi (\sigma) =
\sum_{\lbrace v \rbrace \in X_\tau (0)} \dfrac{m_\tau (\lbrace v \rbrace)}{m_\tau (\emptyset)} \phi_\tau (\lbrace v \rbrace)
 = d_\tau^* \phi_\tau (\emptyset).
\end{dmath*}
This yields that 
\begin{dmath*}
\langle d^* \phi, d^* \psi \rangle = \sum_{\tau \in X(k-1)} m(\tau) (d^* \phi (\tau)) (d^* \psi (\tau)) = \sum_{\tau \in X(k-1)} m_\tau (\emptyset) (d_\tau^* \phi_\tau (\emptyset)) (d_\tau^* \psi_\tau (\emptyset)) = \sum_{\tau \in X(k-1)} \langle d^*_\tau \phi_\tau , d^*_\tau \psi_\tau \rangle,
\end{dmath*}
as needed.

Last, Assume that $0 \leq k <n$, then for every $\sigma \in X (k+1)$, the following holds:
\begin{dmath*}
(d \phi (\sigma)) (d \psi (\sigma)) =
(\sum_{\eta \in X (k), \eta \subset \sigma} \phi (\eta))  (\sum_{\eta \in X (k), \eta \subset \sigma} \psi (\eta)) =
\sum_{\eta \in X (k), \eta \subset \sigma} \phi (\eta) \psi (\eta) +  \sum_{\eta, \eta' \in X (k), \eta \neq \eta', \eta, \eta' \subset \sigma} 2 \phi (\eta) \psi (\eta') =
\sum_{\eta, \eta' \in X (k), \eta \neq \eta', \eta, \eta' \subset \sigma} (\phi (\eta) + \phi (\eta'))(\psi (\eta) + \psi (\eta')) - k \sum_{\eta \in X (k), \eta \subset \sigma} \phi (\eta) \psi (\eta) =
\sum_{\tau \in X (k-1), \tau \subset \sigma} (d_\tau \phi_\tau (\sigma \setminus \tau)) (d_\tau \psi_\tau (\sigma \setminus \tau))  - k \sum_{\eta \in X (k), \eta \subset \sigma} \phi (\eta) \psi (\eta).
\end{dmath*}
Therefore
\begin{dmath*}
\langle d \phi , d \psi \rangle =
\sum_{\sigma \in X (k+1)} m (\sigma) (d \phi (\sigma)) (d \psi (\sigma)) =
\sum_{\tau \in X (k-1), \tau \subset \sigma} m (\sigma ) (d_\tau \phi_\tau (\sigma \setminus \tau)) (d_\tau \psi_\tau (\sigma \setminus \tau))  - k \sum_{\eta \in X (k), \eta \subset \sigma} m (\sigma ) \phi (\eta) \psi (\eta) =
\sum_{\tau \in X (k-1)} \sum_{\sigma \in X (k+1), \tau \subset \sigma} m (\sigma) (d_\tau \phi_\tau (\sigma \setminus \tau)) (d_\tau \psi_\tau (\sigma \setminus \tau)) - k \sum_{\eta \in X (k)} \phi (\eta) \psi (\eta)  \sum_{\sigma \in X (k+1), \eta \subset \sigma} m (\sigma) =
\sum_{\tau \in X (k-1)} \sum_{\gamma \in X_\tau (k)} m_\tau (\gamma) (d_\tau \phi_\tau (\gamma))(d_\tau \psi_\tau (\gamma)) - k \sum_{\eta \in X (k)} m(\eta) \phi (\eta) \psi (\eta) =
\left( \sum_{\tau \in X (k-1)} \langle d_\tau \phi_\tau , d_\tau \psi_\tau \rangle \right) - k \langle \phi , \psi \rangle =
\sum_{\tau \in X (k-1)} \left( \Vert d_\tau \phi_\tau \Vert^2 - \dfrac{k}{k+1} \langle \phi_\tau , \psi_\tau \rangle \right),
\end{dmath*}
where the last equality is due to the equality
$$\langle \phi , \psi \rangle = \dfrac{1}{k+1} \sum_{\tau \in X (k-1)} \langle \phi_\tau , \psi_\tau \rangle,$$
proven above.
\end{proof}

As a result of Proposition \ref{localization proposition} we deduce the following:

\begin{proposition}
\label{M^+ - M^- proposition}
Let $0 \leq k  \leq n-1$ and let $\phi, \psi \in C^k (X,\mathbb{R})$, then
$$ \langle (d^* d - d d^*) \phi, \psi \rangle  = \langle \phi, \psi \rangle + \sum_{\tau \in X (k-1)}    \langle (M ')^+_{\tau, 0} (I-M^-_{\tau, 0}) \phi_\tau , \psi_\tau \rangle.$$
\end{proposition}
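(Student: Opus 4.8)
The plan is to combine the three identities of Proposition \ref{localization proposition} and the operator identities of Lemma \ref{connection between d*d and M lemma} to express everything in terms of localizations, and then to recognize the resulting local operators. First I would use Proposition \ref{localization proposition} to write
$$\langle d^* d \phi, \psi \rangle = \langle d^* \phi, d^* \psi \rangle_{\text{wrong degree}}$$
— more carefully: apply the second bullet of Proposition \ref{localization proposition} at level $k+1$ to the pair $d\phi, d\psi \in C^{k+1}(X,\mathbb{R})$, giving $\langle d^* d \phi, d^* d \psi \rangle$ — no, I actually want $\langle d^*d\phi,\psi\rangle$. The cleaner route: $\langle d^* d\phi, \psi\rangle = \langle d\phi, d\psi\rangle$, and then apply the third bullet of Proposition \ref{localization proposition} (valid for $0 \le k < n$) to get
$$\langle d^* d \phi, \psi\rangle = \langle d\phi,d\psi\rangle = \sum_{\tau \in X(k-1)} \left( \langle d_\tau \phi_\tau, d_\tau \psi_\tau\rangle - \tfrac{k}{k+1}\langle \phi_\tau,\psi_\tau\rangle\right).$$
Similarly $\langle dd^*\phi,\psi\rangle = \langle d^*\phi, d^*\psi\rangle$, and the second bullet of Proposition \ref{localization proposition} gives $\langle d^*\phi,d^*\psi\rangle = \sum_{\tau\in X(k-1)}\langle d^*_\tau \phi_\tau, d^*_\tau\psi_\tau\rangle$.

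Subtracting, the main identity becomes
$$\langle (d^*d - dd^*)\phi,\psi\rangle = \sum_{\tau\in X(k-1)}\left( \langle d_\tau \phi_\tau, d_\tau\psi_\tau\rangle - \langle d^*_\tau \phi_\tau, d^*_\tau\psi_\tau\rangle\right) - \tfrac{k}{k+1}\sum_{\tau\in X(k-1)}\langle\phi_\tau,\psi_\tau\rangle.$$
By the first bullet of Proposition \ref{localization proposition}, the last sum equals $k\langle\phi,\psi\rangle$, so that term contributes $-\tfrac{k}{k+1}\cdot\tfrac{k+1}{1}\langle\phi,\psi\rangle$... wait, the first bullet says $(k+1)\langle\phi,\psi\rangle = \sum_\tau \langle\phi_\tau,\psi_\tau\rangle$, so $\sum_\tau \langle\phi_\tau,\psi_\tau\rangle = (k+1)\langle\phi,\psi\rangle$ and the term is $-\tfrac{k}{k+1}(k+1)\langle\phi,\psi\rangle = -k\langle\phi,\psi\rangle$. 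Hmm, but the claimed identity has a $+\langle\phi,\psi\rangle$, so I expect the local terms $\langle d_\tau\phi_\tau, d_\tau\psi_\tau\rangle - \langle d^*_\tau \phi_\tau, d^*_\tau\psi_\tau\rangle$ to reassemble, via Lemma \ref{connection between d*d and M lemma} applied inside each link $X_\tau$ at level $0$, into $2\langle (M')^+_{\tau,0}(I - M^-_{\tau,0})\phi_\tau,\psi_\tau\rangle + (\text{something})\langle\phi_\tau,\psi_\tau\rangle$; in $X_\tau$ at level $0$ the lemma gives $d^*_\tau d_\tau = 2 M^+_{\tau,0}$ and $d_\tau d^*_\tau \phi_\tau = 1\cdot M^-_{\tau,0}\phi_\tau$, and $M^+_{\tau,0} = \tfrac12 I + \tfrac12 (M')^+_{\tau,0}$ by Definition \ref{non lazy walk def}. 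Substituting and collecting the scalar multiples of $\langle\phi_\tau,\psi_\tau\rangle$ (namely $+1$ from $d^*d$, $-1$ from $dd^*$, and the $-\tfrac{k}{k+1}$), then using the first bullet once more, should yield exactly the $+\langle\phi,\psi\rangle$ and the operator $(M')^+_{\tau,0}(I-M^-_{\tau,0})$.

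The main obstacle is bookkeeping of the scalar (laziness) coefficients: one must carefully track the $\tfrac{k}{k+1}$ correction term from the third bullet of Proposition \ref{localization proposition}, the $\tfrac12 I$ from converting $M^+_{\tau,0}$ to the non-lazy walk $(M')^+_{\tau,0}$, and the fact that $M^-_{\tau,0}$ is a genuine projection so that $(M')^+_{\tau,0}$ and $(I - M^-_{\tau,0})$ interact cleanly — in particular one should verify $(M')^+_{\tau,0} M^-_{\tau,0}$ behaves as expected (since the constant function on $X_\tau(0)$ is fixed by $(M')^+_{\tau,0}$, $M^-_{\tau,0}$ being projection onto constants, the product $(M')^+_{\tau,0}(I - M^-_{\tau,0}) = (I - M^-_{\tau,0})(M')^+_{\tau,0}$ and applying it is unambiguous). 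Once the per-link algebra is settled, summing over $\tau \in X(k-1)$ and invoking the first bullet of Proposition \ref{localization proposition} to collapse $\sum_\tau \langle\phi_\tau,\psi_\tau\rangle$ back to $(k+1)\langle\phi,\psi\rangle$ finishes the proof. I would double-check the edge behavior at $k = 0$ (where $X(k-1) = X(-1) = \{\emptyset\}$ and $X_\emptyset = X$), which should make the identity reduce to the trivial statement $\langle(d^*d - dd^*)\phi,\psi\rangle = \langle\phi,\psi\rangle + \langle(M')^+_0(I-M^-_0)\phi,\psi\rangle$, i.e., $2M^+_0 - M^-_0 = I + (M')^+_0(I - M^-_0)$, which is immediate from $2M^+_0 = I + (M')^+_0$ and $(M')^+_0 M^-_0 = M^-_0$.
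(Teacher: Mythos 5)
Your proposal is correct and uses essentially the same ingredients as the paper's proof: the three localization identities of Proposition \ref{localization proposition}, the relations $d^*_\tau d_\tau = 2M^+_{\tau,0} = I + (M')^+_{\tau,0}$ and $d_\tau d^*_\tau = M^-_{\tau,0}$ from Lemma \ref{connection between d*d and M lemma}, and the key cancellation $(M')^+_{\tau,0} M^-_{\tau,0} = M^-_{\tau,0}$ coming from $M^-_{\tau,0}$ projecting onto constants. The only organizational difference is that you subtract $\langle dd^*\phi,\psi\rangle$ from $\langle d^*d\phi,\psi\rangle$ before doing the per-link algebra, whereas the paper rewrites $\langle d_\tau\phi_\tau, d_\tau\psi_\tau\rangle$ first and then sums; the bookkeeping works out identically.
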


\begin{proof}
Let $\phi \in C^k (X,\mathbb{R})$. Note that for every $\tau \in X (k-1)$, $M^-_{\tau, 0}$ is the orthogonal projection on the space of constant functions in $C^0 (X_\tau, \mathbb{R})$ and therefore $(M ')^+_{\tau, 0} M^-_{\tau, 0} = M^-_{\tau, 0}$.

Further note that by Lemma \ref{connection between d*d and M lemma}
\begin{dmath*}
\langle d_\tau \phi_\tau d_\tau \psi_\tau \rangle = \langle  2 M^+_{\tau, 0} \phi_\tau, \psi_\tau \rangle = \langle   ((M ')^+_{\tau, 0} +I) \phi_\tau, \psi_\tau \rangle =
\langle   (M ')^+_{\tau, 0} \phi_\tau, \psi_\tau \rangle + \langle \phi_\tau, \psi_\tau \rangle  =
\langle   (M ')^+_{\tau, 0}  M^-_{\tau, 0} \phi_\tau, \psi_\tau \rangle + \langle   (M ')^+_{\tau, 0}  (I-M^-_{\tau, 0}) \phi_\tau, \phi_\tau \rangle + \langle \phi_\tau, \psi_\tau \rangle = 
\langle   M^-_{\tau, 0} \phi_\tau , \psi_\tau \rangle + \langle   (M ')^+_{\tau, 0}  (I-M^-_{\tau, 0}) \phi_\tau, \psi_\tau \rangle + \langle \phi_\tau, \psi_\tau \rangle = 
\langle   d_{\tau}^* \phi_\tau , d_{\tau}^* \psi_\tau \rangle + \langle   (M ')^+_{\tau, 0}  (I-M^-_{\tau, 0}) \phi_\tau, \psi_\tau \rangle + \langle \phi_\tau, \psi_\tau \rangle.
\end{dmath*}
Therefore, for every $\tau \in X (k-1)$,
\begin{dmath*}
\langle d_\tau \phi_\tau d_\tau \psi_\tau \rangle - \dfrac{k}{k+1} \langle \phi_\tau, \psi_\tau \rangle = \langle   d_{\tau}^* \phi_\tau , d_{\tau}^* \psi_\tau \rangle + \langle   (M ')^+_{\tau, 0}  (I-M^-_{\tau, 0}) \phi_\tau, \psi_\tau \rangle + \dfrac{1}{k+1} \langle \phi_\tau, \psi_\tau \rangle
\end{dmath*}
Summing over all $\tau \in X (k-1)$ and applying Proposition \ref{localization proposition} yields
$$\langle d \phi, d \psi \rangle = \langle d^* \phi, d^* \psi \rangle + \langle  \phi, \psi \rangle + \sum_{\tau \in X (k-1)}    \langle (M ')^+_{\tau, 0} (I-M^-_{\tau, 0}) \phi_\tau , \psi_\tau \rangle,$$
as needed.
\end{proof}

In order to deduce mixing from the localization results above, we analyze some special cases of Proposition \ref{M^+ - M^- proposition}.

\begin{lemma}
\label{bound inner-product - non partite case}
Let $0 \leq k  \leq n-1$ and let $\phi, \psi \in C^k (X,\mathbb{R})$. Let $\lambda$ be a constant such that for every $\tau \in X(k-1)$, $\Spec ((M')_{\tau,0}^+) \subseteq [-\lambda, \lambda] \cup \lbrace 1 \rbrace$.
If $\phi$ and $\psi$ are orthogonal, then 
$$\vert \langle (d^* d - d d^*) \phi, \psi \rangle \vert  \leq (k+1) \lambda \Vert \phi \Vert \Vert \psi \Vert.$$
\end{lemma}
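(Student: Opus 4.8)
The plan is to apply Proposition \ref{M^+ - M^- proposition}, which reduces the global bilinear form to a sum over links of localized terms, and then to control each localized term using the spectral hypothesis on $(M')^+_{\tau,0}$ together with the Cauchy--Schwarz-type identity from Proposition \ref{localization proposition}. Concretely, for orthogonal $\phi,\psi$ I would first observe that $\langle \phi,\psi\rangle = 0$, so Proposition \ref{M^+ - M^- proposition} gives
\begin{equation*}
\langle (d^*d - dd^*)\phi,\psi\rangle = \sum_{\tau\in X(k-1)} \langle (M')^+_{\tau,0}(I - M^-_{\tau,0})\phi_\tau, \psi_\tau\rangle.
\end{equation*}
The point of the factor $I - M^-_{\tau,0}$ is that it projects onto the orthogonal complement of the constants in $C^0(X_\tau,\mathbb{R})$, which is exactly the subspace on which $(M')^+_{\tau,0}$ has spectrum contained in $[-\lambda,\lambda]$; hence the composed operator $(M')^+_{\tau,0}(I - M^-_{\tau,0})$ is self-adjoint with operator norm at most $\lambda$ on $C^0(X_\tau,\mathbb{R})$.

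Next I would estimate each summand: since $M^-_{\tau,0}$ is an orthogonal projection it is norm-decreasing, so $\Vert (I - M^-_{\tau,0})\phi_\tau\Vert \leq \Vert \phi_\tau\Vert$, and combining with the operator-norm bound and Cauchy--Schwarz in $C^0(X_\tau,\mathbb{R})$,
\begin{equation*}
\bigl| \langle (M')^+_{\tau,0}(I - M^-_{\tau,0})\phi_\tau, \psi_\tau\rangle \bigr| \leq \lambda \Vert \phi_\tau\Vert \Vert \psi_\tau\Vert.
\end{equation*}
Summing over $\tau\in X(k-1)$ and applying Cauchy--Schwarz over that index set gives
\begin{equation*}
\sum_{\tau\in X(k-1)} \lambda \Vert \phi_\tau\Vert \Vert \psi_\tau\Vert \leq \lambda \Bigl( \sum_{\tau} \Vert \phi_\tau\Vert^2\Bigr)^{1/2}\Bigl( \sum_{\tau} \Vert \psi_\tau\Vert^2\Bigr)^{1/2}.
\end{equation*}
Finally, the first bullet of Proposition \ref{localization proposition} identifies $\sum_{\tau\in X(k-1)} \Vert \phi_\tau\Vert^2 = (k+1)\Vert \phi\Vert^2$ and likewise for $\psi$, so the right-hand side becomes $(k+1)\lambda\Vert \phi\Vert\Vert \psi\Vert$, which is the claimed inequality.

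The only genuinely delicate point is justifying that $(M')^+_{\tau,0}(I - M^-_{\tau,0})$ has operator norm $\leq \lambda$ rather than merely that $(M')^+_{\tau,0}$ restricted to $(\mathrm{constants})^\perp$ does: one must check that $(M')^+_{\tau,0}$ and $M^-_{\tau,0}$ commute (which holds because, as noted just before Lemma \ref{bound inner-product - non partite case} and in Remark \ref{M^-_0, M'^+_0 remark}, $M^-_{\tau,0}$ is the projection onto constants, which is an eigenspace of $(M')^+_{\tau,0}$), so that $(M')^+_{\tau,0}(I - M^-_{\tau,0})$ equals $(M')^+_{\tau,0}$ composed with a projection onto an invariant subspace on which the spectrum lies in $[-\lambda,\lambda]$. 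Everything else is routine Cauchy--Schwarz bookkeeping. I do not expect any real obstacle; the proof is essentially a repackaging of Proposition \ref{M^+ - M^- proposition}.
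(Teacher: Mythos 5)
Your proposal is correct and follows essentially the same route as the paper: use orthogonality to kill the $\langle\phi,\psi\rangle$ term in Proposition \ref{M^+ - M^- proposition}, bound each localized summand by $\lambda\Vert\phi_\tau\Vert\Vert\psi_\tau\Vert$ using the spectral hypothesis and the fact that $I - M^-_{\tau,0}$ projects onto $(\text{constants})^\perp$, then apply Cauchy--Schwarz over $\tau$ and the first bullet of Proposition \ref{localization proposition}. Your extra remarks on commutativity and self-adjointness of $(M')^+_{\tau,0}(I - M^-_{\tau,0})$ are a slightly more explicit packaging of the step the paper states as $\Vert (M')^+_{\tau,0}(I-M^-_{\tau,0})\phi_\tau\Vert \leq \lambda\Vert (I-M^-_{\tau,0})\phi_\tau\Vert \leq \lambda\Vert\phi_\tau\Vert$, but the substance is identical.
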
 

\begin{proof}
Let $\phi, \psi \in C^k (X,\mathbb{R})$ such that $\langle \phi, \psi \rangle =0$. Let $\tau \in X(k-1)$, by the assumption on $\Spec ((M')_{\tau,0}^+)$, 
$$\Vert (M ')^+_{\tau, 0} (I-M^-_{\tau, 0}) \phi_\tau \Vert \leq \lambda \Vert (I-M^-_{\tau, 0}) \phi_\tau \Vert \leq \lambda \Vert \phi_\tau \Vert.$$
Then by Proposition \ref{M^+ - M^- proposition}, 
\begin{dmath*}
\vert \langle (d^* d - d d^*) \phi, \psi \rangle \vert = 
\vert \sum_{\tau \in X (k-1)}    \langle (M ')^+_{\tau, 0} (I-M^-_{\tau, 0}) \phi_\tau , \psi_\tau \rangle \vert \leq \\
 \sum_{\tau \in X (k-1)} \vert \langle (M ')^+_{\tau, 0} (I-M^-_{\tau, 0}) \phi_\tau , \psi_\tau \rangle \vert \leq^{CS}
 \sum_{\tau \in X (k-1)} \Vert (M ')^+_{\tau, 0} (I-M^-_{\tau, 0}) \phi_\tau \Vert \Vert \psi_\tau \Vert \leq \\
 \sum_{\tau \in X (k-1)} \lambda \Vert \phi_\tau \Vert \Vert \psi_\tau \Vert \leq^{CS}  
\lambda \left( \sum_{\tau \in X (k-1)} \Vert \phi_\tau \Vert^2 \right)^{\frac{1}{2}} \left( \sum_{\tau \in X (k-1)} \Vert \psi_\tau \Vert^2 \right)^{\frac{1}{2}} = 
(k+1) \lambda \Vert \phi \Vert \Vert \psi \Vert,
\end{dmath*}
where the last equality is due to Proposition \ref{localization proposition}.
\end{proof}

A version of the above Lemma in the partite case requires the following notation. Let $X$ be $(n+1)$-partite complex with sides $S_0,...,S_n$. Denote $[n] = \lbrace 0,...,n\rbrace$. For $A \subseteq [n]$ such that $\vert A \vert = k+1$, denote 
$$X(S_{i} ; i \in A) = \lbrace \sigma \in X(k) : \forall i \in A, \vert \sigma \cap S_{i} \vert =1 \rbrace.$$

\begin{lemma}
\label{bound inner-product - partite case}
Let $X$ be an $(n+1)$-partite complex with sides $S_0,...,S_n$. Let $0 \leq k  \leq n-1$ and let $\phi, \psi \in C^k (X,\mathbb{R})$. Let $\lambda$ be a constant such that for every $\tau \in X(k-1)$, $\Spec ((M')_{\tau,0}^+) \subseteq [-1, \lambda] \cup \lbrace 1 \rbrace$. If there are $A,B \subseteq [n]$ such that $\vert A \vert = \vert B \vert = k+1, \vert A \cap B \vert < k+1$ and $\supp (\phi) \subseteq X(S_{i} ; i \in A), \supp (\psi) \subseteq X(S_{i} ; i \in B)$, then
$$\vert \langle (d^* d - \dfrac{n+1-k}{n-k} d d^*) \phi, \psi \rangle \vert  \leq (n-k)(k+1) \lambda \Vert \phi \Vert \Vert \psi \Vert.$$
\end{lemma}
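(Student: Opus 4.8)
The plan is to mimic the proof of Lemma~\ref{bound inner-product - non partite case}, localizing via Proposition~\ref{M^+ - M^- proposition} and then bounding each local term using the partite structure. First I would record the partite analogue of the identity $(M')^+_{\tau,0}M^-_{\tau,0} = M^-_{\tau,0}$: for $\tau \in X(k-1)$, the link $X_\tau$ is an $(n-k)$-partite complex whose sides are $S_i \cap X_\tau(0)$ for the $n-k$ indices $i \in [n] \setminus \{j : \tau \cap S_j \neq \emptyset\}$, and the $1$-skeleton $X_\tau$ is connected by assumption. So the machinery of Section~\ref{Non-trivial spectrum and random walks in the partite case sec} applies to $X_\tau$ with ``$n$'' replaced by ``$n-k-1$''. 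In particular $M^{-,p}_{\tau,0}$ is the orthogonal projection onto $\Span\{\chi_{S_i \cap X_\tau(0)}\}$, it commutes with $(M')^+_{\tau,0}$, and $(M')^+_{\tau,0} M^{-,p}_{\tau,0} = -\frac{1}{n-k-1} M^{-,p}_{\tau,0}$ (using Proposition~\ref{weight in partite complex prop} applied in $X_\tau$, or equivalently the eigenfunction $\varphi_i$ of eigenvalue $-\tfrac1{n-k-1}$; this is what Lemma~\ref{Lemma M' on M^-,p} is for). Then $I - M^{-,p}_{\tau,0}$ is the projection onto $C^0_{nt}(X_\tau,\mathbb{R})$, on which $(M')^+_{\tau,0}$ has spectrum in $[-1,\lambda]$ away from the trivial directions — but actually by the symmetry Proposition~\ref{symmetry of spectrum in partite case - prop} the spectrum on $C^0_{nt}$ is in $[-(n-k-1)\lambda,\lambda] \subseteq [-(n-k)\lambda, \lambda]$, so its operator norm there is at most $(n-k)\lambda$. (I should be a little careful: the hypothesis $\Spec((M')^+_{\tau,0}) \subseteq [-1,\lambda]\cup\{1\}$ a priori only bounds the top of the spectrum; the bottom bound $-(n-k)\lambda$ on $C^0_{nt}$ must come from the symmetry result, which is exactly why the partite case needs only one-sided control.)

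The second step is to run through Proposition~\ref{M^+ - M^- proposition} and split $\langle d_\tau\phi_\tau, d_\tau\psi_\tau\rangle$ not against $M^-_{\tau,0}$ (the projection onto constants) but against the larger projection $M^{-,p}_{\tau,0}$. Writing $\phi_\tau = M^{-,p}_{\tau,0}\phi_\tau + (I - M^{-,p}_{\tau,0})\phi_\tau$ and likewise for $\psi_\tau$, and using $(M')^+_{\tau,0} = 2M^+_{\tau,0} - I$ from Definition~\ref{non lazy walk def} together with Lemma~\ref{connection between d*d and M lemma}, I get
\begin{dmath*}
\langle d_\tau \phi_\tau, d_\tau \psi_\tau \rangle = \langle ((M')^+_{\tau,0} + I)\phi_\tau, \psi_\tau \rangle = \langle \phi_\tau, \psi_\tau \rangle + \langle (M')^+_{\tau,0} M^{-,p}_{\tau,0}\phi_\tau, \psi_\tau \rangle + \langle (M')^+_{\tau,0}(I - M^{-,p}_{\tau,0})\phi_\tau, \psi_\tau \rangle.
\end{dmath*}
The middle term is $-\tfrac{1}{n-k-1}\langle M^{-,p}_{\tau,0}\phi_\tau, \psi_\tau\rangle$. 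Now here is where the supports enter: since $\supp(\phi) \subseteq X(S_i; i\in A)$ with $|A| = k+1$, for the localization $\phi_\tau$ to be nonzero we need $\tau$ itself to have its vertices in sides indexed by a $k$-subset of $A$; then the single ``free'' vertex of $\phi_\tau$ lies in the unique remaining side $S_{a(\tau)}$ with $a(\tau) \in A$. Similarly $\psi_\tau$ is supported on the side $S_{b(\tau)}$, $b(\tau) \in B$. By Lemma~\ref{Lemma M' on M^-,p} (applied in $X_\tau$), $(M')^+_{\tau,0} M^{-,p}_{\tau,0}\phi_\tau$ is supported \emph{off} $S_{a(\tau)}$, proportional to $M^-_{\tau,0}\phi_\tau$ on every other side; and since $A \cap B \subsetneq A$ (as $|A\cap B| < k+1 = |A|$), on the $\tau$'s that contribute both $\phi_\tau$ and $\psi_\tau$ must be supported on sides in $A\cap B$, forcing $a(\tau) = b(\tau)$ — hence $\psi_\tau$ is supported exactly on $S_{a(\tau)}$, which is precisely where $(M')^+_{\tau,0}M^{-,p}_{\tau,0}\phi_\tau$ vanishes. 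So that inner product is $0$, and likewise the bare $\langle M^{-,p}_{\tau,0}\phi_\tau, \psi_\tau\rangle$ term in the middle vanishes (same support reasoning: $M^{-,p}_{\tau,0}\phi_\tau$ lives on $S_{a(\tau)}$, fine — wait, I need to redo this). Let me be precise in the write-up: the key point is that $\langle (M')^+_{\tau,0}\phi_\tau, \psi_\tau\rangle$ decomposes and the $M^{-,p}$-part contributes nothing because $(M')^+_{\tau,0}$ maps the $S_{a(\tau)}$-supported function $M^{-,p}_{\tau,0}\phi_\tau$ to something supported off $S_{a(\tau)} = S_{b(\tau)} = \supp(\psi_\tau)$. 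The upshot of this step is the local identity
\begin{dmath*}
\langle d_\tau \phi_\tau, d_\tau \psi_\tau \rangle - \dfrac{n+1-k}{n-k}\Bigl(\langle d^*_\tau \phi_\tau, d^*_\tau \psi_\tau \rangle\Bigr) = \langle \phi_\tau, \psi_\tau \rangle + \langle (M')^+_{\tau,0}(I - M^{-,p}_{\tau,0})\phi_\tau, \psi_\tau\rangle + (\text{terms that telescope correctly}),
\end{dmath*}
where the coefficient $\tfrac{n+1-k}{n-k}$ is engineered so that the ``$\langle\phi_\tau,\psi_\tau\rangle$'' pieces coming from the three sources (the identity term, the $\tfrac{k}{k+1}$ correction in Proposition~\ref{localization proposition}, and $d^*_\tau d^*_\tau = 2M^-_{\tau,0}$) cancel against $\langle\phi,\psi\rangle = 0$. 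I expect working out exactly which scalar makes everything cancel — and double-checking the bookkeeping of $n-k$ versus $n-k-1$ in the link — to be the main obstacle; the cleanest route is probably to compute $\langle (d^*d - c\, dd^*)\phi,\psi\rangle$ symbolically, expand via Propositions~\ref{localization proposition} and~\ref{M^+ - M^- proposition}, substitute $\langle\phi,\psi\rangle = 0$, and solve for the $c$ that kills the $\langle\phi_\tau,\psi_\tau\rangle$ remainder, verifying it equals $\tfrac{n+1-k}{n-k}$.

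The final step is the Cauchy–Schwarz estimate, identical in shape to the end of Lemma~\ref{bound inner-product - non partite case}. Having reduced to
\begin{dmath*}
\langle (d^*d - \tfrac{n+1-k}{n-k}dd^*)\phi, \psi\rangle = \sum_{\tau \in X(k-1)} \langle (M')^+_{\tau,0}(I - M^{-,p}_{\tau,0})\phi_\tau, \psi_\tau\rangle,
\end{dmath*}
I bound each summand by $\|(M')^+_{\tau,0}(I - M^{-,p}_{\tau,0})\phi_\tau\|\,\|\psi_\tau\| \le (n-k)\lambda\,\|\phi_\tau\|\,\|\psi_\tau\|$ using the spectral bound on $C^0_{nt}(X_\tau,\mathbb{R})$ from the first paragraph (operator norm $\le (n-k)\lambda$ there, and $\|(I - M^{-,p}_{\tau,0})\phi_\tau\| \le \|\phi_\tau\|$). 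Then another Cauchy–Schwarz over $\tau$ gives $(n-k)\lambda \bigl(\sum_\tau \|\phi_\tau\|^2\bigr)^{1/2}\bigl(\sum_\tau \|\psi_\tau\|^2\bigr)^{1/2} = (n-k)\lambda \cdot (k+1)\|\phi\|\|\psi\|$ by the first item of Proposition~\ref{localization proposition}, which is exactly the claimed bound $(n-k)(k+1)\lambda\|\phi\|\|\psi\|$. One subtlety to flag in the write-up: I should confirm that $(I - M^{-,p}_{\tau,0})$ really does land in $C^0_{nt}(X_\tau,\mathbb{R})$ and that $(M')^+_{\tau,0}$ preserves it, which follows since $M^{-,p}_{\tau,0}$ commutes with $(M')^+_{\tau,0}$; and that the degenerate links where $X_\tau(0)$ has fewer than the expected number of sides don't occur, which is handled by the standing connectedness assumption plus the support hypotheses forcing $\tau \subseteq X(S_i; i \in A\cap B)$-type simplices only.
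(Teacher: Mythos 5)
Your high-level strategy is correct — localize via Proposition~\ref{M^+ - M^- proposition}, split against the larger projection $M^{-,p}_{\tau,0}$ instead of $M^-_{\tau,0}$, use Proposition~\ref{symmetry of spectrum in partite case - prop} to get the one-sided-to-two-sided spectral bound $(n-k)\lambda$ on $C^0_{nt}(X_\tau,\mathbb{R})$, and finish with a double Cauchy--Schwarz. But there is a concrete error in the support argument that reverses the mechanism producing the coefficient $\frac{n+1-k}{n-k}$.

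You write that ``the $\tau$'s that contribute both $\phi_\tau$ and $\psi_\tau$ must be supported on sides in $A\cap B$, forcing $a(\tau)=b(\tau)$.'' The opposite is true. Since $|A|=|B|=k+1$ and $|A\cap B|<k+1$, for both $\phi_\tau$ and $\psi_\tau$ to be nonzero the $k$ vertices of $\tau$ must lie in sides indexed by $A\cap B$, which forces $|A\cap B|=k$; then $\phi_\tau$ is supported on $S_a$ with $\{a\}=A\setminus B$ and $\psi_\tau$ is supported on $S_b$ with $\{b\}=B\setminus A$, and $a\neq b$. Consequently, by Lemma~\ref{Lemma M' on M^-,p}, $(M')^+_{\tau,0}M^{-,p}_{\tau,0}\phi_\tau$ does \emph{not} vanish on $\supp(\psi_\tau)$: it equals $\frac{n+1-k}{n-k}M^-_{\tau,0}\phi_\tau$ there. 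This non-vanishing contribution is exactly the heart of the proof: the paper shows that $\langle (M')^+_{\tau,0}(M^{-,p}_{\tau,0}-M^-_{\tau,0})\phi_\tau,\psi_\tau\rangle = \frac{1}{n-k}\langle d^*_\tau\phi_\tau,d^*_\tau\psi_\tau\rangle$, and summing this over $\tau$ produces the extra $\frac{1}{n-k}\langle dd^*\phi,\psi\rangle$ that, combined with the $dd^*$ already present in Proposition~\ref{M^+ - M^- proposition}, yields the factor $\frac{n+1-k}{n-k}$. By contrast, the $\langle\phi_\tau,\psi_\tau\rangle$ terms sum to $(k+1)\langle\phi,\psi\rangle=0$ trivially (since $\supp\phi\cap\supp\psi=\emptyset$), so they are \emph{not} where the coefficient comes from, contrary to your suggestion that $c=\frac{n+1-k}{n-k}$ should be ``solved for'' to kill those terms.

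There are also consistent off-by-one slips: the link $X_\tau$ for $\tau\in X(k-1)$ is $(n+1-k)$-partite (not $(n-k)$-partite), so the machinery of Section~\ref{Non-trivial spectrum and random walks in the partite case sec} applies with ``$n$'' replaced by ``$n-k$'' (not ``$n-k-1$''), and Lemma~\ref{Lemma M' on M^-,p} gives the factor $\frac{n+1-k}{n-k}$. Finally, the identity $(M')^+_{\tau,0}M^{-,p}_{\tau,0}=-\frac{1}{n-k-1}M^{-,p}_{\tau,0}$ is false in any case, since $M^{-,p}_{\tau,0}$ has the constant functions (eigenvalue $1$) in its range; the correct statement is the piecewise description from Lemma~\ref{Lemma M' on M^-,p}, applied to functions supported on a single side.
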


\begin{remark}
Note that the bound depends only on the one-sided spectral gap. This is essential, because as we discussed above, in the partite case we do not have a tight lower bound on the spectra in the links. For instance, in $1$-dimensional links of a partite complex $X$, $-1$ is always in the spectrum of the random walk operator.
\end{remark}

\begin{proof}
Let $\phi, \psi$ as above, and let $\tau \in X(k-1)$. Note that $X_\tau$ is a $(n+1-k)$-partite complex with sides $X_\tau (0) \cap S_0,..., X_\tau (0) \cap S_n$ whenever those are not empty sets (note that $k$ of these sets are empty). Therefore $\phi_\tau$ and $\psi_\tau$ are orthogonal, because their supports are disjoint. Let $M^{-,p}_{\tau,0}$ to be the orthogonal projection on the space of function that are constant on the sides of $X_\tau$. We note that since $M^{-,p}_{\tau,0}$ is a projection on a space that contains the constant functions and $M^{-}_{\tau,0}$ is the orthogonal projection on the space of constant functions, it follows that $M^{-}_{\tau,0} M^{-,p}_{\tau,0} = M^{-,p}_{\tau,0} M^{-}_{\tau,0} = M^{-}_{\tau,0}$ and therefore
$$(I-M^{-}_{\tau,0}) = (I-M^{-}_{\tau,0})(M^{-,p}_{\tau,0} + I-M^{-,p}_{\tau,0}) = (M^{-,p}_{\tau,0}-M^{-}_{\tau,0}) + (I-M^{-,p}_{\tau,0}).$$

We will show that under the above assumptions on $\phi, \psi$, 
\begin{equation}
\label{partite lemma eq}
\langle (M')_{\tau,0}^+ (M^{-,p}_{\tau,0} - M^{-}_{\tau,0}) \phi_\tau, \psi_\tau \rangle =  \dfrac{1}{n-k} \langle d^*_\tau \phi_\tau, d^*_\tau \psi_\tau \rangle
\end{equation}
and therefore $\langle (M')_{\tau,0}^+ (I-M^{-}_{\tau,0}) \phi_\tau, \psi_\tau \rangle =  \dfrac{1}{n-k} \langle d^*_\tau \phi_\tau, d^*_\tau \psi_\tau \rangle + \langle (M')_{\tau,0}^+ (I-M^{-,p}_{\tau,0}) \phi_\tau, \psi_\tau \rangle$. We distinguish between two cases: first, if $\tau \neq A \cap B$, then $\phi_\tau \equiv 0$ or $\psi_\tau \equiv 0$ and the equality \ref{partite lemma eq} is obvious (since both sides of the equation are $0$). Assume next that $\tau = A \cap B$. Then there are $i_0, i_1$ such that $\supp (\phi_\tau) \subseteq X_\tau (0) \cap S_{i_0}$ and $\supp (\psi_\tau) \subseteq X_\tau (0) \cap S_{i_1}$. Without loss of generality, we will assume that $\supp (\phi_\tau) \subseteq X_\tau (0) \cap S_{0}$ and $\supp (\psi_\tau) \subseteq X_\tau (0) \cap S_{1}$.  By Lemma \ref{Lemma M' on M^-,p}, 
$$(M')_{\tau,0}^+ M^{-,p}_{\tau,0} \phi_\tau (\lbrace v \rbrace) = \begin{cases}
0 & v \in X_\tau (0) \cap S_0 \\
\frac{n+1-k}{n-k} M_{0,\tau}^{-} \phi_\tau & v \notin X_\tau (0) \cap S_0
\end{cases}.$$ 
Therefore, since $(M')_{\tau,0}^+ M^{-}_{\tau,0} = M^{-}_{\tau,0}$, this yields that
$$(M')_{\tau,0}^+ (M^{-,p}_{\tau,0} - M^{-}_{\tau,0}) \phi_\tau (\lbrace v \rbrace) = \begin{cases}
- M^{-}_{\tau,0} \phi_\tau & v \in X_\tau (0) \cap S_0 \\
\frac{1}{n-k} M_{0,\tau}^{-} \phi_\tau & v \notin X_\tau (0) \cap S_0
\end{cases}.$$ 
Recall that $\psi_\tau$ is supported on $X_\tau (0) \cap S_1$ and therefore
$$\langle (M')_{\tau,0}^+ (M^{-,p}_{\tau,0} - M^{-}_{\tau,0}) \phi_\tau, \psi_\tau \rangle =  \dfrac{1}{n-k} \langle M_{0,\tau}^{-} \phi_\tau,  \psi_\tau \rangle = - \dfrac{n-k-1}{n-k} \langle d^*_\tau \phi_\tau, d^*_\tau \psi_\tau \rangle,$$
as needed.

As noted above, this yields that
$$\langle (M')_{\tau,0}^+ (I-M^{-}_{\tau,0}) \phi_\tau, \psi_\tau \rangle =  \dfrac{1}{n-k} \langle d^*_\tau \phi_\tau, d^*_\tau \psi_\tau \rangle + \langle (M')_{\tau,0}^+ (I-M^{-,p}_{\tau,0}) \phi_\tau, \psi_\tau \rangle.$$
Summing over all $\tau \in X(k-1)$, yields that
\begin{dmath*}
\sum_{\tau \in X (k-1)}    \langle (M ')^+_{\tau, 0} (I-M^-_{\tau, 0}) \phi_\tau , \psi_\tau \rangle =
\sum_{\tau \in X (k-1)} \dfrac{1}{n-k} \langle d^*_\tau \phi_\tau, d^*_\tau \psi_\tau \rangle + \langle (M')_{\tau,0}^+ (I-M^{-,p}_{\tau,0}) \phi_\tau, \psi_\tau \rangle = 
 \dfrac{1}{n-k} \langle d^* \phi, d^* \psi \rangle + \sum_{\tau \in X (k-1)} \langle (M')_{\tau,0}^+ (I-M^{-,p}_{\tau,0}) \phi_\tau, \psi_\tau \rangle = 
 \dfrac{1}{n-k}  \langle d d^* \phi, \psi \rangle + \sum_{\tau \in X (k-1)} \langle (M')_{\tau,0}^+ (I-M^{-,p}_{\tau,0}) \phi_\tau, \psi_\tau \rangle .
\end{dmath*}
Therefore, by Proposition \ref{M^+ - M^- proposition},
$$\langle (d^* d - d d^*) \phi, \psi \rangle  = \dfrac{1}{n-k}  \langle d d^* \phi, \psi \rangle +\sum_{\tau \in X (k-1)} \langle (M')_{\tau,0}^+ (I-M^{-,p}_{\tau,0}) \phi_\tau, \psi_\tau \rangle ,$$
which yields
$$\langle (d^* d - \dfrac{n+1-k}{n-k}d d^*) \phi, \psi \rangle  = \sum_{\tau \in X (k-1)} \langle (M')_{\tau,0}^+ (I-M^{-,p}_{\tau,0}) \phi_\tau, \psi_\tau \rangle.$$

By Proposition \ref{symmetry of spectrum in partite case - prop}, $\Spec ((M')_{\tau,0}^+ (I-M^{-,p}_{\tau,0})) \subseteq [-(n-k) \lambda, \lambda]$, and therefore
\begin{dmath*}
\vert \langle (d^* d - \frac{n+1-k}{n-k} d d^*) \phi, \psi \rangle  \vert = \\
\vert \sum_{\tau \in X (k-1)} \langle (M')_{\tau,0}^+ (I-M^{-,p}_{\tau,0}) \phi_\tau, \psi_\tau \rangle \vert \leq  \\
 \sum_{\tau \in X (k-1)} \vert \langle (M')_{\tau,0}^+ (I-M^{-,p}_{\tau,0}) \phi_\tau, \psi_\tau \rangle \vert \leq^{CS} \\
\sum_{\tau \in X (k-1)} \Vert (M')_{\tau,0}^+ (I-M^{-,p}_{\tau,0}) \phi_\tau \Vert \Vert \psi_\tau  \Vert \leq \\
\sum_{\tau \in X (k-1)} (n-k) \lambda \Vert \phi_\tau \Vert \Vert \psi_\tau \Vert \leq^{CS} \\
 (n-k) \lambda \left(\sum_{\tau \in X (k-1)} \Vert \phi_\tau \Vert^2 \right)^{\frac{1}{2}} \left(\sum_{\tau \in X (k-1)} \Vert \psi_\tau \Vert^2 \right)^{\frac{1}{2}} =
 (n-k) (k+1) \lambda \Vert \phi \Vert \Vert \psi \Vert.
\end{dmath*}
\end{proof}

\section{Random walks on sets of simplices determined by vertex sets}
\label{Random walks on sets of simplices determined by vertex sets sec}

Below, we will explore the connections between $d^*d$ and $d d^*$ when those are restricted to sets of simplices determined by vertex sets. 

We will use the following convention: given operators $A_0,...,A_k$, $\prod_{i=0}^k A_i = A_k A_{k-1} ... A_0$, .i.e., the product notation refers to multiplying from right to left.

Let $X$ be pure $n$-dimensional weighted simplicial complex and let $U_0,...,U_k \subseteq X(0)$ be disjoint sets. Denote $P_{X(U_{0},...,U_{k})} : C^k (X,\mathbb{R}) \rightarrow C^k (X,\mathbb{R})$ to be the projection on $k$-cochains supported on $X(U_{0},...,U_{k})$, i.e., for every $\sigma \in X(k)$,
$$P_{X(U_{0},...,U_{k})} \phi (\sigma) = \begin{cases}
\phi (\sigma) & \sigma \in X(U_{0},...,U_{k}) \\
0 & \sigma \notin  X(U_{0},...,U_{k})
\end{cases}.$$

\begin{lemma}
\label{P d^* d P lemma}
Let $X$ as above and let $0 \leq k \leq n-1$. For $U_0,...,U_{k+1} \subseteq X(0)$ pairwise disjoint sets, the following equality holds:
$$P_{X(U_1,...,U_{k+1})} d^*_k P_{X(U_0,...,U_{k+1})} d_k P_{X(U_0,...,U_{k})} = P_{X(U_1,...,U_{k+1})} d^*_k d_k P_{X(U_0,...,U_{k})}.$$
\end{lemma}

\begin{proof}
We will show that for every $\phi \in C^k (X, \mathbb{R})$ and every $\sigma \in X(k)$,
$$(P_{X(U_1,...,U_{k+1})} d^*_k P_{X(U_0,...,U_{k+1})} d_k P_{X(U_0,...,U_{k})} \phi ) (\sigma) = (P_{X(U_1,...,U_{k+1})} d^*_k d_k P_{X(U_0,...,U_{k})} \phi ) (\sigma).$$
Let $\phi \in C^k (X, \mathbb{R})$ and let $\sigma \in X(k)$. If $\sigma \notin X(U_1,...,U_{k+1})$, then the equality holds trivially, because both sides are $0$. Assume that $\sigma = \lbrace u_1,...,u_{k+1} \rbrace$ and $u_i \in U_i$ for every $1 \leq i \leq k+1$. Note that Lemma \ref{d* lemma}, for every $\psi \in C^{k+1} (X, \mathbb{R})$, 
$$(d^*_k \psi) (\lbrace u_1,...,u_{k+1} \rbrace) = \sum_{v \in X(0), \lbrace v, u_1,...,u_{k+1} \rbrace \in X(k+1)} \dfrac{m(\lbrace v, u_1,...,u_{k+1} \rbrace)}{m(\lbrace u_1,...,u_{k+1} \rbrace)} \psi (\lbrace v, u_1,...,u_{k+1} \rbrace).$$
For $\psi = d_k P_{X(U_0,...,U_{k})} \phi$, we note that for every $\lbrace v, u_1,...,u_{k+1} \rbrace \in X(k+1)$, 
\begin{dmath*}
(d_k P_{X(U_0,...,U_{k})} \phi)  (\lbrace v, u_1,...,u_{k+1} \rbrace) = \sum_{\tau \subseteq \lbrace v, u_1,...,u_{k+1} \rbrace, \tau \in X(k)} P_{X(U_1,...,U_{0})} \phi (\tau) = \begin{cases}
0 & v \notin U_0 \\
\phi (\lbrace v, u_1,...,u_{k} \rbrace) & v \in U_0
\end{cases}.
\end{dmath*}
Therefore, for every $\lbrace v, u_1,...,u_{k+1} \rbrace \in X(k+1)$, 
$$d_k P_{X(U_0,...,U_{k})} \phi (\lbrace v, u_1,...,u_{k+1} \rbrace) = P_{X(U_0,...,U_{k+1})} d_k P_{X(U_0,...,U_{k})} \phi (\lbrace v, u_1,...,u_{k+1} \rbrace),$$
and therefore
$$(d^*_k d_k P_{X(U_0,...,U_{k})} \phi) (\lbrace u_1,...,u_{k+1} \rbrace) = (d^*_k P_{X(U_0,...,U_{k+1})} d_k P_{X(U_0,...,U_{k})} \phi) (\lbrace u_1,...,u_{k+1} \rbrace),$$
as needed.
\end{proof}

\begin{lemma}
\label{P d d^* P lemma}
Let $X$ as above and let $0 \leq k \leq n-2$. For $U_0,...,U_{k+2} \subseteq X(0)$ pairwise disjoint sets, the following equality holds:
$$P_{X(U_1,...,U_{k+2})} d_k P_{X(U_1,...,U_{k+1})} d_k^* P_{X(U_0,...,U_{k+1})} = P_{X(U_1,...,U_{k+2})} d_k d_k^* P_{X(U_0,...,U_{k+1})}.$$
\end{lemma}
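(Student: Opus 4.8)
The plan is to mimic the proof of Lemma~\ref{P d^* d P lemma} essentially verbatim, but with the roles of $d$ and $d^*$ interchanged and with one dimension shifted. Concretely, I would show that for every $\phi \in C^{k+1}(X,\mathbb{R})$ and every $\sigma \in X(k+2)$,
$$(P_{X(U_1,...,U_{k+2})} d_k P_{X(U_1,...,U_{k+1})} d_k^* P_{X(U_0,...,U_{k+1})} \phi)(\sigma) = (P_{X(U_1,...,U_{k+2})} d_k d_k^* P_{X(U_0,...,U_{k+1})} \phi)(\sigma).$$
If $\sigma \notin X(U_1,...,U_{k+2})$ both sides vanish, so I may assume $\sigma = \{u_1,\dots,u_{k+2}\}$ with $u_i \in U_i$. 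The key point is to track which intermediate simplices $\tau \in X(k+1)$ can appear with a nonzero coefficient when we apply $d_k$ to $d_k^* P_{X(U_0,...,U_{k+1})}\phi$ and then evaluate at $\sigma$.

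First I would use the formula for $d_k$: $(d_k \psi)(\sigma) = \sum_{\tau \subset \sigma,\ \tau \in X(k+1)} \psi(\tau)$, so the only $k+1$ simplices contributing are the $k+2$ facets $\sigma \setminus \{u_i\}$ of $\sigma$. For $\psi = d_k^* P_{X(U_0,...,U_{k+1})}\phi$, I would then invoke Lemma~\ref{d* lemma} to write $\psi(\tau) = \sum_{v} \frac{m(\tau \cup \{v\})}{m(\tau)} (P_{X(U_0,...,U_{k+1})}\phi)(\tau \cup \{v\})$. The crucial observation: since $P_{X(U_0,...,U_{k+1})}\phi$ is supported on simplices with exactly one vertex in each of $U_0,\dots,U_{k+1}$, and since $\tau$ is a facet of $\sigma$ obtained by deleting some $u_i$, the only way $\tau \cup \{v\}$ can lie in $X(U_0,...,U_{k+1})$ is if $\tau = \sigma \setminus \{u_{k+2}\} = \{u_1,\dots,u_{k+1}\}$ (the facet that already has one vertex in each of $U_1,\dots,U_{k+1}$) and $v \in U_0$. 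In other words, among the $k+2$ facets of $\sigma$, only the single facet $\{u_1,\dots,u_{k+1}\} \in X(U_1,...,U_{k+1})$ can carry a nonzero value of $\psi$ after the projection. This is exactly what the inserted projection $P_{X(U_1,...,U_{k+1})}$ enforces, so inserting it changes nothing, giving the claimed equality.

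The step I expect to require the most care is the bookkeeping argument that, after applying $P_{X(U_0,...,U_{k+1})}$, the cochain $d_k^* P_{X(U_0,...,U_{k+1})}\phi$ is already supported on $X(U_1,...,U_{k+1})$ when restricted to the facets of a simplex $\sigma \in X(U_1,...,U_{k+2})$ — i.e., that for such $\sigma$ and any facet $\tau \subset \sigma$ with $\tau \notin X(U_1,...,U_{k+1})$ one has $(d_k^* P_{X(U_0,...,U_{k+1})}\phi)(\tau) = 0$. This follows because such a $\tau$ contains two vertices lying in $U_1 \cup \dots \cup U_{k+1}$-indexed sets in the "wrong" pattern (it omits some $u_i$ with $i \le k+1$ but keeps $u_{k+2}$, hence has no vertex in $U_i$ and a vertex in $U_{k+2}$), so no vertex $v$ can be added to make it land in $X(U_0,\dots,U_{k+1})$, and the defining sum for $d_k^*$ is empty. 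Once this is in hand, the identity is immediate and the proof, like that of Lemma~\ref{P d^* d P lemma}, is a short verification rather than a computation. I would write it out in the same pointwise-evaluation style for consistency with the preceding lemma.
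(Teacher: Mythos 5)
Your argument is correct in substance and follows the paper's proof exactly: you track the support of $d_k^* P_{X(U_0,\dots,U_{k+1})}\phi$ and observe that among the $k+2$ facets of $\{u_1,\dots,u_{k+2}\}$, only $\{u_1,\dots,u_{k+1}\}$ lies in that support (every other facet retains $u_{k+2}\in U_{k+2}$ and so cannot be completed to a simplex in $X(U_0,\dots,U_{k+1})$), which makes the inserted projection $P_{X(U_1,\dots,U_{k+1})}$ harmless. One notational caution: your dimension indices are systematically shifted by one --- in the paper's convention $X(j)$ consists of simplices with $j+1$ vertices, so $\sigma=\{u_1,\dots,u_{k+2}\}$ lies in $X(k+1)$ (not $X(k+2)$) and its facets lie in $X(k)$ (not $X(k+1)$), and the formula for $d_k$ sums over $\tau\in X(k)$ with $\tau\subset\sigma\in X(k+1)$.
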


\begin{proof}
The proof is very similar to the proof of Lemma \ref{P d^* d P lemma} above. As above it is enough to show that for every $\psi \in C^{k+1} (X,\mathbb{R})$ and every $\lbrace u_1,...,u_{k+2} \rbrace \in X(k+1)$ such that $u_i \in U_i$ it holds that 
$$(d_k P_{X(U_1,...,U_{k+1})} d_k^* P_{X(U_0,...,U_{k+1})} \psi) (\lbrace u_1,...,u_{k+2} \rbrace)  = (d_k d_k^* P_{X(U_0,...,U_{k+1})} \psi )(\lbrace u_1,...,u_{k+2} \rbrace).$$
Note that
$$\supp (d_k^* P_{X(U_0,...,U_{k+1})} \psi) \subseteq \bigcup_{i=0}^{k+1} X(U_0,...\widehat{U_i},...,U_{k+1}).$$ 
Therefore 
\begin{dmath*}
(d_k d_k^* P_{X(U_0,...,U_{k+1})} \psi ) (\lbrace u_1,...,u_{k+2} \rbrace) =  
\sum_{i=1}^{k+2} (d_k^* P_{X(U_0,...,U_{k+1})} \psi ) (\lbrace u_1,...,\widehat{u_i},...,u_{k+2} \rbrace) =
d_k^* P_{X(U_0,...,U_{k+1})} \psi (\lbrace u_1,...,u_{k+1} \rbrace) =
P_{X(U_1,...,U_{k+1})} d_k^* P_{X(U_0,...,U_{k+1})} \psi (\lbrace u_1,...,u_{k+1} \rbrace) =
(d_k P_{X(U_1,...,U_{k+1})} d_k^* P_{X(U_0,...,U_{k+1})} \psi ) (\lbrace u_1,...,u_{k+2} \rbrace),
\end{dmath*}
as needed.
\end{proof}

\begin{lemma}
\label{product lemma}
Let $X$ as above and let $0 \leq k \leq n$ be a constant. For any disjoint sets $U_0,...,U_n \subseteq X(0)$ the following holds:
\begin{align*}
 P_{X(U_{n-k},...,U_{n})} d_k^* \left(\prod_{i=1}^{n-k-1} P_{X(U_{i},...,U_{k+1+i})} d_k d^*_k \right) P_{X(U_0,...,U_{k+1})} d_k P_{X(U_0,...,U_{k})} = \\
\left(\prod_{i=1}^{n-k} P_{X(U_{i},...,U_{k+i})} d_k^* d_k \right) P_{X(U_{0},...,U_{k})}.
\end{align*}
\end{lemma}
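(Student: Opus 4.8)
The plan is to prove the identity by induction on the "length" of the product, using Lemmas \ref{P d^* d P lemma} and \ref{P d d^* P lemma} as the two basic moves. The philosophy is this: both sides are built from copies of $d_k$ and $d_k^*$ interleaved with projections, and the two lemmas say precisely that a projection sandwiched between a $d_k$ and a $d_k^*$ (in the appropriate index configuration) can be removed without changing the operator, because the image of $d_k P_{X(U_0,\dots,U_k)}$ is already supported on $X(U_0,\dots,U_{k+1})$ (Lemma \ref{P d^* d P lemma}), and dually $d_k^* P_{X(U_0,\dots,U_{k+1})}$ followed by $d_k$ already lands appropriately (Lemma \ref{P d d^* P lemma}). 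So the strategy is to start from the left-hand side and peel off the projections $P_{X(U_i,\dots,U_{k+1+i})}$ that sit between consecutive $d_k$ and $d_k^*$ factors one at a time, each removal converting a piece $d_k^* P_{X(\cdots)} d_k$ into $d_k^* d_k$, i.e. into one factor of the right-hand side.

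Concretely, I would set up the induction as follows. For $0 \le j \le n-k-1$, let $L_j$ denote the operator obtained from the left-hand side by keeping the outermost $j$ "link" projections $P_{X(U_{n-k},\dots,U_n)} d_k^* \big(\prod_{i=n-k-j}^{n-k-1} P_{X(U_i,\dots,U_{k+1+i})} d_k d_k^*\big)$ and replacing the remaining inner block by the already-partially-contracted form $\big(\prod_{i=1}^{n-k-1-j} P_{X(U_i,\dots,U_{k+i})} d_k^* d_k\big) P_{X(U_0,\dots,U_k)}$ — so that $L_0$ is the left-hand side and $L_{n-k-1}$ is the right-hand side. The inductive step $L_j \to L_{j+1}$ is exactly one application of Lemma \ref{P d^* d P lemma} with the index set shifted by $i = n-k-1-j$: the rightmost surviving "link" projection $P_{X(U_{n-k-1-j},\dots,U_{n-k-j+k})} d_k d_k^*$ — wait, one must check carefully which of Lemma \ref{P d^* d P lemma} or \ref{P d d^* P lemma} applies at each stage — meets the top of the inner block which ends in $d_k^* P_{X(U_0,\dots,U_{k+1})} d_k P_{X(U_0,\dots,U_k)}$ (for $j=0$) or in $\dots d_k^* d_k P_{X(U_0,\dots,U_k)}$ more generally, and Lemma \ref{P d^* d P lemma} lets us insert/delete the middle projection $P_{X(U_0,\dots,U_{k+1})}$ to merge the adjacent $d_k^*$ and $d_k$ into a single $d_k^* d_k$ guarded by the outer projection $P_{X(U_1,\dots,U_{k+1})}$. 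Iterating $n-k-1$ times collapses the whole left product into $\big(\prod_{i=1}^{n-k} P_{X(U_i,\dots,U_{k+i})} d_k^* d_k\big) P_{X(U_0,\dots,U_k)}$.

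The main obstacle, and the step I would be most careful about, is bookkeeping the index shifts: making sure that at each contraction the three projections appearing in Lemma \ref{P d^* d P lemma} (namely $P_{X(U_1,\dots,U_{k+1})}$, $P_{X(U_0,\dots,U_{k+1})}$, $P_{X(U_0,\dots,U_k)}$ with the roles of the $U$'s played by the correct shifted windows $U_i,\dots,U_{k+i}$) really match the projections present in $L_j$, and that the "link" projections $P_{X(U_i,\dots,U_{k+1+i})}$ on $(k+1)$-cochains interact correctly with the $(k)$-cochain projections. There is also the degenerate edge case $n-k-1 = 0$ (i.e. $k = n-1$ or $k=n$), where the inner product over $i$ is empty and the statement reduces directly to Lemma \ref{P d^* d P lemma} (or is trivial), which should be handled as the base case. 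Once the indices are lined up, each step is a verbatim invocation of the already-proved lemma, so no new computation is needed beyond the index arithmetic.
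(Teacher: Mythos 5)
Your overall strategy --- iterating Lemmas \ref{P d^* d P lemma} and \ref{P d d^* P lemma} to move projections past $d_k$ and $d_k^*$ --- is exactly right and is indeed what the paper does. But the interpolant $L_j$ as you wrote it down does not type-check: its outer block $P_{X(U_{n-k},\dots,U_n)} d_k^* \bigl(\prod_{i=n-k-j}^{n-k-1} P_{X(\cdot)} d_k d_k^*\bigr)$ ends (reading operators right to left) in $d_k^*$ and therefore expects a $(k{+}1)$-cochain as input, whereas the inner block $\bigl(\prod_{i=1}^{n-k-1-j} P_{X(\cdot)} d_k^* d_k\bigr) P_{X(U_0,\dots,U_k)}$ outputs a $k$-cochain; the bridging factor $P_{X(U_{n-k-1-j},\dots,U_{n-j})} d_k$ at the seam is missing. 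Because of this $L_0$ is not the left-hand side and $L_{n-k-1}$ is not the right-hand side, so the induction as stated does not close. Moreover, even after the bridge is restored, each peel genuinely requires both lemmas, not a single application of Lemma \ref{P d^* d P lemma}: you must first use Lemma \ref{P d d^* P lemma} to insert the $k$-degree projection $P_{X(U_i,\dots,U_{k+i})}$ between the adjacent $d_k$ and $d_k^*$, and only afterwards can Lemma \ref{P d^* d P lemma} delete the $(k{+}1)$-degree projection sitting between $d_k^*$ and $d_k$; this matches your own mid-proof hedge that the choice of lemma is unclear.

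The paper sidesteps the hybrid interpolant with a two-pass chain of equalities: first, by idempotency, insert a $(k{+}1)$-degree projection $P_{X(U_{i-1},\dots,U_{k+i})}$ to the right of each $d_k^*$ factor (each is already present at the left end of the neighbouring block, so this costs nothing); then apply Lemma \ref{P d d^* P lemma} to all $n{-}k{-}1$ inner blocks simultaneously to also insert the $k$-degree projections; re-group the resulting string; and finally apply Lemma \ref{P d^* d P lemma} to all $n{-}k$ resulting blocks simultaneously to erase the $(k{+}1)$-degree projections. This keeps the degrees aligned throughout and never needs a partially-contracted state. With the two fixes above your sequential peeling would also go through, but it is more bookkeeping for the same underlying reason, namely that the $(k{+}1)$-degree and $k$-degree projections live on opposite sides of $d_k$ and must both be handled in each block.
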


\begin{proof}
Let $U_0,...,U_n \subseteq X(0)$ as above, then
\begin{dmath*}
P_{X(U_{n-k},...,U_{n})} d_k^* \left(\prod_{i=1}^{n-k-1} P_{X(U_{i},...,U_{k+1+i})} d_k d^*_k \right) P_{X(U_0,...,U_{k+1})} d_k P_{X(U_0,...,U_{k})} = 
P_{X(U_{n-k},...,U_{n})} d_k^* \left(\prod_{i=1}^{n-k-1} P_{X(U_{i},...,U_{k+1+i})} d_k d^*_k P_{X(U_{i-1},...,U_{k+i})} \right)  d_k P_{X(U_0,...,U_{k})} =^{\text{Lemma } \ref{P d d^* P lemma}} 
P_{X(U_{n-k},...,U_{n})} d_k^* \left(\prod_{i=1}^{n-k-1} P_{X(U_{i},...,U_{k+1+i})} d_k P_{X(U_{i},...,U_{k+i})} d^*_k P_{X(U_{i-1},...,U_{k+i})} \right)  d_k P_{X(U_0,...,U_{k})} = 
\prod_{i=1}^{n-k} P_{X(U_{i},...,U_{k+i})} d_k^* P_{X(U_{i-1},...,U_{k+i})} d_k P_{X(U_{i-1},...,U_{k-1+i})} =^{\text{Lemma } \ref{P d d^* P lemma}} 
\prod_{i=1}^{n-k} P_{X(U_{i},...,U_{k+i})} d_k^* d_k P_{X(U_{i-1},...,U_{k-1+i})} =
\left(\prod_{i=1}^{n-k} P_{X(U_{i},...,U_{k+i})} d_k^* d_k \right) P_{X(U_{0},...,U_{k})}.
\end{dmath*}
\end{proof}

\begin{corollary}
\label{inner-product coro}
Let $X$ as above and let $0 \leq k \leq n-2$ be a constant. For any disjoint sets $U_0,...,U_n \subseteq X(0)$ the following holds:
\begin{align*}
\left\langle \left(\prod_{i=1}^{n-k-1} P_{X(U_{i},...,U_{k+1+i})} d_k d^*_k \right) \chi_{X(U_0,...,U_{k+1})}, \chi_{X(U_{n-k-1},...,U_{n})} \right\rangle = \\
 \left\langle \left(\prod_{i=1}^{n-k} P_{X(U_{i},...,U_{k+i})} d_k^* d_k \right) \chi_{X(U_0,...,U_{k})}, \chi_{X(U_{n-k},...,U_{n})} \right\rangle.
\end{align*}

\end{corollary}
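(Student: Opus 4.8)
The plan is to obtain the corollary as a direct consequence of Lemma \ref{product lemma}, by evaluating the operator identity there on the indicator cochain $\chi_{X(U_0,\ldots,U_k)}$ and pairing both sides with $\chi_{X(U_{n-k},\ldots,U_n)}$. First I would record two elementary facts about how the signless differential moves between vertex-determined simplex sets. Since $P_{X(U_0,\ldots,U_k)}$ is the projection onto cochains supported on $X(U_0,\ldots,U_k)$, we have $P_{X(U_0,\ldots,U_k)}\chi_{X(U_0,\ldots,U_k)} = \chi_{X(U_0,\ldots,U_k)}$. Moreover, for a $(k+1)$-simplex $\sigma = \lbrace u_0,\ldots,u_{k+1}\rbrace$ with $u_j \in U_j$, the only $k$-face of $\sigma$ lying in $X(U_0,\ldots,U_k)$ is $\sigma \setminus \lbrace u_{k+1}\rbrace$, so $(d_k\chi_{X(U_0,\ldots,U_k)})(\sigma) = 1$; hence $P_{X(U_0,\ldots,U_{k+1})}\, d_k \chi_{X(U_0,\ldots,U_k)} = \chi_{X(U_0,\ldots,U_{k+1})}$. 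The same computation with indices shifted gives $P_{X(U_{n-k-1},\ldots,U_n)}\, d_k \chi_{X(U_{n-k},\ldots,U_n)} = \chi_{X(U_{n-k-1},\ldots,U_n)}$.

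Next I would plug $\phi = \chi_{X(U_0,\ldots,U_k)}$ into Lemma \ref{product lemma}. Write $\Phi := \big(\prod_{i=1}^{n-k-1} P_{X(U_i,\ldots,U_{k+1+i})}\, d_k d_k^*\big) \chi_{X(U_0,\ldots,U_{k+1})}$. On the left-hand side of the lemma, the block $P_{X(U_0,\ldots,U_{k+1})}\, d_k\, P_{X(U_0,\ldots,U_k)}$ applied to $\chi_{X(U_0,\ldots,U_k)}$ collapses, by the two facts above, to $\chi_{X(U_0,\ldots,U_{k+1})}$, so the lemma reads
$$P_{X(U_{n-k},\ldots,U_n)}\, d_k^*\, \Phi \;=\; \Big(\prod_{i=1}^{n-k} P_{X(U_i,\ldots,U_{k+i})}\, d_k^* d_k\Big) \chi_{X(U_0,\ldots,U_k)}.$$
Pairing both sides with $\chi_{X(U_{n-k},\ldots,U_n)}$, the right-hand side becomes precisely the right-hand side of the corollary, so it only remains to identify the pairing of the left-hand side with the corollary's left-hand side.

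For that, note that the outermost factor of the product defining $\Phi$ (the one with $i = n-k-1$) is the projection $P_{X(U_{n-k-1},\ldots,U_n)}$, so $\Phi$ is supported on $X(U_{n-k-1},\ldots,U_n)$. Using that $P_{X(U_{n-k},\ldots,U_n)}$ is a self-adjoint projection fixing $\chi_{X(U_{n-k},\ldots,U_n)}$, and then the adjointness of $d_k$ and $d_k^*$, we get $\langle P_{X(U_{n-k},\ldots,U_n)}\, d_k^* \Phi, \chi_{X(U_{n-k},\ldots,U_n)}\rangle = \langle \Phi, d_k \chi_{X(U_{n-k},\ldots,U_n)}\rangle$; since $\Phi$ lives on $X(U_{n-k-1},\ldots,U_n)$, this equals $\langle \Phi, P_{X(U_{n-k-1},\ldots,U_n)}\, d_k \chi_{X(U_{n-k},\ldots,U_n)}\rangle$, which by the shifted identity above is $\langle \Phi, \chi_{X(U_{n-k-1},\ldots,U_n)}\rangle$ — exactly the left-hand side of the corollary. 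Combining the two computations finishes the proof.

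This statement is essentially a bookkeeping repackaging of Lemma \ref{product lemma} into inner-product form (the shape needed for the mixing arguments), so there is no deep obstacle. The main thing to be careful about is keeping the index ranges straight: confirming that the hypothesis $0\le k\le n-2$ is exactly what makes $\prod_{i=1}^{n-k-1}$ nonempty and every $d_k, d_k^*$ appearing well-defined, checking that the leftmost projection of the lower-walk product really confines $\Phi$ to $X(U_{n-k-1},\ldots,U_n)$, and applying the indicator/differential identities with the correctly shifted index blocks.
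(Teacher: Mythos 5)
Your proposal is correct and follows essentially the same route as the paper's proof: both start from the indicator-cochain identities $P_{X(U_0,\ldots,U_{k+1})} d_k \chi_{X(U_0,\ldots,U_k)} = \chi_{X(U_0,\ldots,U_{k+1})}$ (and the shifted analogue) and reduce the corollary to Lemma \ref{product lemma} via adjointness of $d_k$ and $d_k^*$ and self-adjointness of the projections. The only cosmetic difference is organizational: the paper substitutes both indicator identities up front and then moves the operator $P_{X(U_{n-k-1},\ldots,U_n)} d_k P_{X(U_{n-k},\ldots,U_n)}$ from the right slot to the left slot of the inner product (discarding the redundant outer projection), whereas you apply Lemma \ref{product lemma} first, pair with $\chi_{X(U_{n-k},\ldots,U_n)}$, and move $P_{X(U_{n-k},\ldots,U_n)} d_k^*$ from the left slot to the right slot, using the support of $\Phi$ to reinstate $P_{X(U_{n-k-1},\ldots,U_n)}$ before invoking the shifted indicator identity; these are algebraically identical.
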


\begin{proof}
Note that 
$$P_{X(U_0,...,U_{k+1})} d_k P_{X(U_0,...,U_{k})} \chi_{X(U_0,...,U_{k})} =  \chi_{X(U_0,...,U_{k+1})},$$
$$P_{X(U_{n-k-1},...,U_{n})} d_k P_{X(U_{n-k},...,U_{n})} \chi_{X(U_{n-k},...,U_{n})} =  \chi_{X(U_{n-k-1},...,U_{n})}.$$
Therefore
\begin{dmath*}
\left\langle \left(\prod_{i=1}^{n-k-1} P_{X(U_{i},...,U_{k+1+i})} d_k d^*_k \right) \chi_{X(U_0,...,U_{k+1})}, \chi_{X(U_{n-k-1},...,U_{n})} \right\rangle = 
\left\langle \left(\prod_{i=1}^{n-k-1} P_{X(U_{i},...,U_{k+1+i})} d_k d^*_k \right) P_{X(U_0,...,U_{k+1})} d_k P_{X(U_0,...,U_{k})} \chi_{X(U_0,...,U_{k})}, \\
P_{X(U_{n-k-1},...,U_{n})} d_k P_{X(U_{n-k},...,U_{n})} \chi_{X(U_{n-k},...,U_{n})} \right\rangle = 
\left\langle P_{X(U_{n-k},...,U_{n})} d_k^* P_{X(U_{n-k-1},...,U_{n})} \left(\prod_{i=1}^{n-k-1} P_{X(U_{i},...,U_{k+1+i})} d_k d^*_k \right) \\
P_{X(U_0,...,U_{k+1})} d_k P_{X(U_0,...,U_{k})} \chi_{X(U_0,...,U_{k})}, 
 \chi_{X(U_{n-k},...,U_{n})} \right\rangle, 
\end{dmath*}
and the corollary follows from Lemma \ref{product lemma}. 
\end{proof}

While the above corollary described a connection between $d_k^* d_k$ and $d_k d^*_k$, the results below describe a connection between $d_k^* d_k$ and $d_{k-1} d^*_{k-1}$ under assumptions on the local spectra at the links.

\begin{lemma}
\label{product norm bound - non partite case}
Let $X$ be as above and let $0 \leq k \leq n-1$. Assume that there is a constant $0< \lambda < 1$ such that for every $\tau \in X(k-1)$, $\Spec ((M')_{\tau,0}^+) \subseteq [-\lambda, \lambda] \cup \lbrace 1 \rbrace$, then for every pairwise disjoint sets $U_0,...,U_n \subseteq X(0)$,
\begin{dmath*}
\left\Vert \prod_{i=1}^{n-k} P_{X(U_{i},...,U_{k+i})} d_k^* d_k P_{X(U_{i-1},...,U_{k-1+i})}  - {\prod_{i=1}^{n-k} P_{X(U_{i},...,U_{k+i})} d_{k-1} d_{k-1}^* P_{X(U_{i-1},...,U_{k-1+i})}} \right\Vert \leq \lambda ((k+1)(k+2)^{n-k} - (k+1)^{n-k+1} ).
\end{dmath*}
\end{lemma}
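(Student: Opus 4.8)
The plan is to telescope the difference of products using the standard identity $\prod A_i - \prod B_i = \sum_i (\prod_{j>i} B_j)(A_i - B_i)(\prod_{j<i} A_j)$, so that we reduce bounding the norm of the difference of the two length-$(n-k)$ products to bounding, for each factor index $i$, the norm of the single-factor difference $P_{X(U_{i},...,U_{k+i})} (d_k^* d_k - d_{k-1} d_{k-1}^*) P_{X(U_{i-1},...,U_{k-1+i})}$, multiplied by crude operator-norm bounds on the remaining $n-k-1$ factors. The factors $P \, d_k^* d_k \, P$ have norm at most $\Vert d_k^* d_k \Vert = k+2$ by Corollary \ref{norm of d^* d, d d^* coro} (projections are norm-$1$), and similarly $P \, d_{k-1} d_{k-1}^* \, P$ has norm at most $k+1$; in the telescoping sum the $i$-th term carries $n-k-i$ copies of a $d_{k-1}d_{k-1}^*$-type factor and $i-1$ copies of a $d_k^*d_k$-type factor, giving a weight of at most $(k+1)^{n-k-i}(k+2)^{i-1}$.

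The key single-factor estimate is: for pairwise disjoint $U_0,\dots,U_{k+1}$ (here playing the role of $U_{i-1},\dots,U_{k+i}$), one has
\begin{equation*}
\left\Vert P_{X(U_1,\dots,U_{k+1})} (d_k^* d_k - d_{k-1} d_{k-1}^*) P_{X(U_0,\dots,U_{k})} \right\Vert \leq (k+1)\lambda.
\end{equation*}
This I would get from Lemma \ref{bound inner-product - non partite case}: for any $\phi$ supported on $X(U_0,\dots,U_k)$ and $\psi$ supported on $X(U_1,\dots,U_{k+1})$, the supports are disjoint (they live on different vertex-set patterns since $U_0 \cap U_{k+1} = \emptyset$ forces the patterns to differ), hence $\phi \perp \psi$, and Lemma \ref{bound inner-product - non partite case} gives $|\langle (d_k^*d_k - d_{k-1}d_{k-1}^*)\phi, \psi\rangle| \leq (k+1)\lambda \Vert\phi\Vert\Vert\psi\Vert$; taking the supremum over unit $\phi,\psi$ supported on these sets yields the operator-norm bound on $P(\cdots)P$. (One must note $d^*d - dd^* = (d_k^*d_k - d_{k-1}d_{k-1}^*)$ in the notation of that lemma, with $k$ there equal to $k$ here.)

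Assembling, the telescoped sum is bounded by $\sum_{i=1}^{n-k} (k+1)^{n-k-i}(k+2)^{i-1} \cdot (k+1)\lambda$, and I would check by a routine geometric-series computation that $(k+1)\sum_{i=1}^{n-k}(k+1)^{n-k-i}(k+2)^{i-1} = (k+1)\cdot\frac{(k+2)^{n-k}-(k+1)^{n-k}}{(k+2)-(k+1)} = (k+1)(k+2)^{n-k} - (k+1)^{n-k+1}$, which is exactly the claimed constant. The main obstacle is bookkeeping: making sure the telescoping identity is applied with the correct right-to-left product convention stated in the paper, that each intermediate projection matches up (the output projection of one factor equals the input projection of the next, which is what makes the two given products well-formed), and that the count of left/right surviving factors in the $i$-th telescoping term is tallied correctly so the exponents in the geometric sum come out right. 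There is also a minor subtlety that the lemma's bound is stated for the un-projected operators applied to functions with prescribed support, so I should phrase the reduction as "operator norm of $PAP$ = sup of $|\langle A\phi,\psi\rangle|$ over unit vectors in the ranges of the projections" and invoke disjointness of supports to get orthogonality — straightforward but worth stating explicitly.
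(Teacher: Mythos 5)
Your proposal is correct and follows essentially the same route as the paper: telescope the difference of products, bound the single-factor difference $\Vert P(d_k^*d_k - d_{k-1}d_{k-1}^*)P\Vert \le (k+1)\lambda$ via Lemma \ref{bound inner-product - non partite case} (using orthogonality from disjoint supports), bound the surviving factors by $k+1$ and $k+2$ via Corollary \ref{norm of d^* d, d d^* coro}, and sum the geometric series. The bookkeeping you flag (right-to-left convention, exponent tally) matches the paper's computation exactly.
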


\begin{proof}
We first note that for every $1 \leq i \leq n-k$ and for every $\phi, \psi \in C^k (X,\mathbb{R})$, $P_{X(U_{i-1},...,U_{k-1+i})} \phi$ is orthogonal to $P_{X(U_{i},...,U_{k+i})} \psi$ and therefore by Lemma \ref{bound inner-product - non partite case}, we have that
\begin{dmath*}
{\left\vert \left\langle P_{X(U_{i},...,U_{k+i})} \left( d_k^* d_k -  d_{k-1} d_{k-1}^* \right) P_{X(U_{i-1},...,U_{k-1+i})} \phi, \psi \right\rangle \right\vert } = 
\left\vert \left\langle \left( d_k^* d_k -  d_{k-1} d_{k-1}^* \right) P_{X(U_{i-1},...,U_{k-1+i})} \phi, P_{X(U_{i},...,U_{k+i})} \psi \right\rangle \right\vert \leq 
(k+1) \lambda \Vert P_{X(U_{i-1},...,U_{k-1+i})} \phi \Vert \Vert P_{X(U_{i},...,U_{k+i})} \psi \Vert \leq 
(k+1) \lambda \Vert \phi \Vert \Vert \psi \Vert,
\end{dmath*}
which implies that
$$\left\Vert P_{X(U_{i-1},...,U_{k-1+i})} \left( d_k^* d_k -  d_{k-1} d_{k-1}^* \right) P_{X(U_{i},...,U_{k+i})} \right\Vert \leq (k+1) \lambda.$$
By the triangle inequality,
\begin{dmath*}
\left\Vert \prod_{i=1}^{n-k} P_{X(U_{i},...,U_{k+i})} d_k^* d_k P_{X(U_{i-1},...,U_{k-1+i})}  - {\prod_{i=1}^{n-k} P_{X(U_{i},...,U_{k+i})} d_{k-1} d_{k-1}^* P_{X(U_{i-1},...,U_{k-1+i})}} \right\Vert \leq \\
\sum_{j=1}^{n-k} \left\Vert \left( \prod_{i=j+1}^{n-k} P_{X(U_{i},...,U_{k+i})}  d_{k-1} d_{k-1}^* P_{X(U_{i-1},...,U_{k-1+i})} \right) \\
\left( P_{X(U_{j},...,U_{k+j})} \left( d_k^* d_k -  d_{k-1} d_{k-1}^* \right) P_{X(U_{j-1},...,U_{k-1+j})} \right) \\
 \left( \prod_{i=1}^{j-1} P_{X(U_{i},...,U_{k+i})}  d_{k}^* d_{k} P_{X(U_{i-1},...,U_{k-1+i})} \right) \right\Vert \leq^{\text{Corollary } \ref{norm of d^* d, d d^* coro}} 
 (k+1) \lambda \sum_{j=1}^{n-k} (k+2)^{j-1} (k+1)^{n-k-j} = \lambda ((k+1)(k+2)^{n-k} - (k+1)^{n-k+1} ).
\end{dmath*}
\end{proof}

\begin{corollary}
\label{product norm bound of indicator functions - non partite case coro}
Let $X$ be as above and let $0 \leq k \leq n-1$. Assume that there is a constant $0< \lambda < 1$ such that for every $\tau \in X(k-1)$, $\Spec ((M')_{\tau,0}^+) \subseteq [-\lambda, \lambda] \cup \lbrace 1 \rbrace$, then for every pairwise disjoint sets $U_0,...,U_n \subseteq X(0)$,
\begin{dmath*}
\left\vert \left\langle  \left(\left( \prod_{i=1}^{n-k} P_{X(U_{i},...,U_{k+i})} d_k^* d_k \right) - \left(\prod_{i=1}^{n-k} P_{X(U_{i},...,U_{k+i})} d_{k-1} d_{k-1}^* \right) \right) \chi_{X(U_0,...,U_k)}, \chi_{X(U_{n-k},...,U_n)} \right\rangle \right\vert \leq 
\lambda ((k+1)(k+2)^{n-k} - (k+1)^{n-k+1} ) \sqrt{m(X(U_0,...,U_k)) m(X(U_{n-k},...,U_n))} \leq 
\lambda ((k+1)(k+2)^{n-k} - (k+1)^{n-k+1} ) \sqrt{m(U_0) m(U_n)}.
\end{dmath*}
\end{corollary}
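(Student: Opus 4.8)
The plan is to read this corollary off from Lemma \ref{product norm bound - non partite case} together with the Cauchy--Schwarz inequality, once we observe that the operator appearing here acts on the vector $\chi_{X(U_0,...,U_k)}$ in exactly the same way as the difference of operators whose norm is controlled in that lemma.

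The first and only slightly delicate point is that, on the input $\chi_{X(U_0,...,U_k)}$, the operator $\prod_{i=1}^{n-k} P_{X(U_{i},...,U_{k+i})} d_k^* d_k$ of the corollary acts exactly like the operator $\prod_{i=1}^{n-k} P_{X(U_{i},...,U_{k+i})} d_k^* d_k P_{X(U_{i-1},...,U_{k-1+i})}$ of Lemma \ref{product norm bound - non partite case}, and likewise with $d_{k-1}d_{k-1}^*$ in place of $d_k^* d_k$. I would prove this by induction on the number of factors already applied, working right to left as in the paper's product convention: the value of the first $j$ factors on $\chi_{X(U_0,...,U_k)}$ is supported on $X(U_{j},...,U_{k+j})$ --- for $j=0$ this is $\chi_{X(U_0,...,U_k)}$ itself, and the inductive step is forced by the leftmost projection $P_{X(U_{j+1},...,U_{k+j+1})}$ of the $(j+1)$-st factor --- so inserting the extra projection $P_{X(U_{j},...,U_{k+j})} = P_{X(U_{(j+1)-1},...,U_{k-1+(j+1)})}$ right before the $(j+1)$-st application of $d_k^* d_k$ has no effect, and for $j=0$ the inserted $P_{X(U_0,...,U_k)}$ fixes $\chi_{X(U_0,...,U_k)}$. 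Thus the vector inside the inner product of the corollary is exactly $A\,\chi_{X(U_0,...,U_k)}$, where $A$ denotes the operator estimated in Lemma \ref{product norm bound - non partite case}.

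Then Cauchy--Schwarz gives $\vert \langle A\,\chi_{X(U_0,...,U_k)}, \chi_{X(U_{n-k},...,U_n)} \rangle \vert \leq \Vert A \Vert\, \Vert \chi_{X(U_0,...,U_k)} \Vert\, \Vert \chi_{X(U_{n-k},...,U_n)} \Vert$. I would insert the bound $\Vert A \Vert \leq \lambda((k+1)(k+2)^{n-k}-(k+1)^{n-k+1})$ from Lemma \ref{product norm bound - non partite case}, and note that $\Vert \chi_{X(V_0,...,V_k)}\Vert^2 = \sum_{\sigma \in X(V_0,...,V_k)} m(\sigma) = m(X(V_0,...,V_k))$ straight from the definition of the inner product; this gives the first displayed inequality. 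For the second inequality, since every $\sigma \in X(U_0,...,U_k)$ contains a unique vertex of $U_0$, regrouping the sum over that vertex and applying Corollary \ref{weight in l dim simplices} (with a $0$-simplex in place of $\tau$ and $l=k$; the case $k=0$ being trivial, as then $X(U_0)=U_0$) yields
$$m(X(U_0,...,U_k)) \leq \sum_{\{v\}\in U_0}\sum_{\sigma\in X(k),\{v\}\subset\sigma} m(\sigma) = \frac{1}{k!}m(U_0) \leq m(U_0),$$
and symmetrically $m(X(U_{n-k},...,U_n)) \leq m(U_n)$ using the unique vertex of $\sigma$ lying in $U_n$. Substituting finishes the argument.

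The main obstacle, such as it is, is the bookkeeping in the induction of the second paragraph: making sure the projections dropped in the corollary's statement relative to Lemma \ref{product norm bound - non partite case} really are redundant when the operator is fed the indicator function $\chi_{X(U_0,...,U_k)}$. Once that identity is in place, the estimate is immediate from Lemma \ref{product norm bound - non partite case}, Cauchy--Schwarz, and the weight identities of Section \ref{Weighted simplicial complexes sec}.
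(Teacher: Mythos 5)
Your proposal is correct and follows essentially the same route as the paper: the paper also inserts the redundant projections (justified tersely by the observation $\chi_{X(U_0,\dots,U_k)} = P_{X(U_0,\dots,U_k)} \chi_{X(U_0,\dots,U_k)}$, with the intermediate insertions following from idempotence of the adjacent identical projections), then applies Cauchy--Schwarz and Lemma \ref{product norm bound - non partite case}, and dismisses the final bound $m(X(U_0,\dots,U_k)) \leq m(U_0)$ as following ``from the definition of $m$.'' Your argument only differs in that you spell out the induction on supports for the projection insertion and give an explicit derivation via Corollary \ref{weight in l dim simplices} of the sharper bound $m(X(U_0,\dots,U_k)) \leq \tfrac{1}{k!} m(U_0)$, both of which are correct and fill in details the paper elides.
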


\begin{proof}
Note that $\chi_{X(U_0,...,U_k)} = P_{X(U_0,...,U_k)} \chi_{X(U_0,...,U_k)}$ and therefore
\begin{dmath*}
\left\vert \left\langle  \left(\left( \prod_{i=1}^{n-k} P_{X(U_{i},...,U_{k+i})} d_k^* d_k \right) - {\left(\prod_{i=1}^{n-k} P_{X(U_{i},...,U_{k+i})} d_{k-1} d_{k-1}^* \right)} \right) \chi_{X(U_0,...,U_k)}, \chi_{X(U_{n-k},...,U_n)} \right\rangle \right\vert = \\
\left\vert \left\langle  \left(\left( \prod_{i=1}^{n-k} P_{X(U_{i},...,U_{k+i})} d_k^* d_k P_{X(U_{i-1},...,U_{k-1+i})}\right) - {\left(\prod_{i=1}^{n-k} P_{X(U_{i},...,U_{k+i})} d_{k-1} d_{k-1}^* P_{X(U_{i-1},...,U_{k-1+i})} \right)} \right) \chi_{X(U_0,...,U_k)}, \chi_{X(U_{n-k},...,U_n)} \right\rangle \right\vert \leq^{CS}
\left\Vert \left( \prod_{i=1}^{n-k} P_{X(U_{i},...,U_{k+i})} d_k^* d_k P_{X(U_{i-1},...,U_{k-1+i})}\right) - \\
\left(\prod_{i=1}^{n-k} P_{X(U_{i},...,U_{k+i})} d_{k-1} d_{k-1}^* P_{X(U_{i-1},...,U_{k-1+i})} \right) \right\Vert 
 \Vert \chi_{X(U_0,...,U_k)} \Vert \Vert \chi_{X(U_{n-k},...,U_n)} \Vert \leq^{\text{Lemma } \ref{product norm bound - non partite case}} \\
\lambda ((k+1)(k+2)^{n-k} - (k+1)^{n-k+1} ) \sqrt{m(X(U_0,...,U_k)) m(X(U_{n-k},...,U_n))}.
\end{dmath*}
The fact that $m(X(U_0,...,U_k)) \leq m(U_0), m(X(U_{n-k},...,U_n)) \leq m(U_n)$ follows from the definition of $m$.
\end{proof}

In the partite case, the analogue to Lemma \ref{product norm bound - non partite case} reads as follows:
\begin{lemma}
\label{product norm bound - partite case}
Let $X$ be as above and let $0 \leq k \leq n-1$. Assume that $X$ is $(n+1)$-partite with sides $S_0,...,S_n$. Also assume that there is a constant $0< \lambda < 1$ such that for every $\tau \in X(k-1)$, $\Spec ((M')_{\tau,0}^+) \subseteq [-1, \lambda] \cup \lbrace 1 \rbrace$, then for every sets $U_0 \subseteq S_0,...,U_n \subseteq S_n$,
\begin{dmath*}
\left\Vert \prod_{i=1}^{n-k} P_{X(U_{i},...,U_{k+i})} d_k^* d_k P_{X(U_{i-1},...,U_{k-1+i})}  - \prod_{i=1}^{n-k} P_{X(U_{i},...,U_{k+i})} \left( \dfrac{n+1-k}{n-k} d_{k-1} d_{k-1}^* \right) P_{X(U_{i-1},...,U_{k-1+i})} \right\Vert \leq \\
 \lambda (n-k)((k+1)(k+2)^{n-k} - (k+1)^{n-k+1} ).
\end{dmath*}
\end{lemma}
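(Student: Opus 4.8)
The plan is to mimic the proof of Lemma \ref{product norm bound - non partite case} almost verbatim, replacing the non-partite single-link estimate (Lemma \ref{bound inner-product - non partite case}) by its partite counterpart (Lemma \ref{bound inner-product - partite case}), and being careful about the appearance of the factor $\frac{n+1-k}{n-k}$ in front of $d_{k-1}d_{k-1}^*$.

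First I would record the key single-factor estimate. Fix $1 \leq i \leq n-k$ and $\phi,\psi \in C^k(X,\mathbb{R})$. The point is that $U_0 \subseteq S_0,\dots,U_n \subseteq S_n$ forces $X(U_{i-1},\dots,U_{k-1+i}) = X(S_j;\, j \in \{i-1,\dots,k-1+i\}) \cap (\text{support constraints})$, so $P_{X(U_{i-1},\dots,U_{k-1+i})}\phi$ is supported on $X(S_j;\, j\in A)$ with $A = \{i-1,\dots,k-1+i\}$, while $P_{X(U_i,\dots,U_{k+i})}\psi$ is supported on $X(S_j;\, j\in B)$ with $B = \{i,\dots,k+i\}$; these satisfy $|A| = |B| = k+1$ and $|A\cap B| = k < k+1$. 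Hence Lemma \ref{bound inner-product - partite case} applies and gives
$$\left\vert \left\langle \left( d_k^* d_k - \tfrac{n+1-k}{n-k}\, d_{k-1}d_{k-1}^* \right) P_{X(U_{i-1},\dots,U_{k-1+i})}\phi,\; P_{X(U_i,\dots,U_{k+i})}\psi \right\rangle \right\vert \leq (n-k)(k+1)\lambda \,\Vert\phi\Vert\,\Vert\psi\Vert,$$
which, exactly as in the non-partite proof, yields the operator-norm bound
$$\left\Vert P_{X(U_i,\dots,U_{k+i})} \left( d_k^* d_k - \tfrac{n+1-k}{n-k}\, d_{k-1}d_{k-1}^* \right) P_{X(U_{i-1},\dots,U_{k-1+i})} \right\Vert \leq (n-k)(k+1)\lambda.$$

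Next I would telescope. Write $D := d_k^* d_k$ and $D' := \tfrac{n+1-k}{n-k} d_{k-1}d_{k-1}^*$, and abbreviate the projection-sandwiched factors as $A_i := P_{X(U_i,\dots,U_{k+i})}\, D\, P_{X(U_{i-1},\dots,U_{k-1+i})}$ and $A_i' := P_{X(U_i,\dots,U_{k+i})}\, D'\, P_{X(U_{i-1},\dots,U_{k-1+i})}$. Then $\prod_{i=1}^{n-k} A_i - \prod_{i=1}^{n-k} A_i' = \sum_{j=1}^{n-k} \left(\prod_{i=j+1}^{n-k} A_i'\right)(A_j - A_j')\left(\prod_{i=1}^{j-1} A_i\right)$ (with the convention on product order from the excerpt). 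By submultiplicativity of the operator norm, the single-factor bound above, and Corollary \ref{norm of d^* d, d d^* coro} (which gives $\Vert A_i\Vert \leq \Vert d_k^* d_k\Vert = k+2$ and $\Vert A_i'\Vert \leq \tfrac{n+1-k}{n-k}\Vert d_{k-1}d_{k-1}^*\Vert = \tfrac{n+1-k}{n-k}(k+1) = n+1-k$ — here one must double-check that the relevant operator norm of $d_{k-1}d_{k-1}^*$ on $C^k$ is $k+1$, which is exactly Corollary \ref{norm of d^* d, d d^* coro}), each summand is bounded by $(n-k)(k+1)\lambda \cdot (n+1-k)^{\,n-k-j} \cdot (k+2)^{\,j-1}$. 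Summing the geometric-type series $\sum_{j=1}^{n-k} (k+2)^{j-1}(n+1-k)^{n-k-j}$ and simplifying should produce the stated constant; I expect the clean way is to observe that replacing $\Vert A_i'\Vert$ by the cruder bound $k+2$ as well (which is legitimate since $n+1-k \le$ or $\ge k+2$ depending on the regime — one should instead keep $n+1-k$ but note the paper's claimed bound $(n-k)((k+1)(k+2)^{n-k}-(k+1)^{n-k+1})$ matches the non-partite sum $\sum_j (k+2)^{j-1}(k+1)^{n-k-j}$ times $(n-k)$, so in fact I should bound $\Vert A_i'\Vert \leq k+1$ directly from $\Vert d_{k-1}d_{k-1}^*\Vert = k+1$ without the $\tfrac{n+1-k}{n-k}$ prefactor being absorbed — wait, $A_i'$ carries the prefactor). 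Let me reconsider: the honest bound is $\Vert A_j - A_j'\Vert \le (n-k)(k+1)\lambda$, $\Vert A_i\Vert \le k+2$, and $\Vert A_i'\Vert \le k+1$ would require $\tfrac{n+1-k}{n-k}(k+1) \le k+1$, false for $n>k$; so either the paper intends $\Vert A_i'\Vert \le (k+2)$ comparing $n+1-k$ vs $k+2$, or the telescoping is arranged so the $A_i'$ factors also get bound $k+2$. I would match the target constant $(n-k)\sum_{j=1}^{n-k}(k+1)(k+2)^{j-1}(k+1)^{n-k-j}$ by using $\Vert A_i'\Vert \le k+1$ via a sharper estimate: on $\operatorname{Im}(P_{X(U_{i-1},\dots)})$ one has $d_{k-1}d_{k-1}^*$ acting, and its restriction norm is still $\le k+1$, but the prefactor is the issue — so most likely the paper's $A_i'$ does \emph{not} include the prefactor and the prefactor sits only on the final displayed difference via a separate argument.

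\textbf{Main obstacle.} The genuine difficulty is bookkeeping the factor $\frac{n+1-k}{n-k}$ correctly through the telescoping so that the final constant comes out as stated rather than with extra $\left(\frac{n+1-k}{n-k}\right)^{n-k}$-type factors; resolving this requires reading off from Lemma \ref{bound inner-product - partite case} that it is $d_k^*d_k - \frac{n+1-k}{n-k}d_{k-1}d_{k-1}^*$ (not $d_k^*d_k - d_{k-1}d_{k-1}^*$) that is small, and then arranging the telescoping sum so that in each hybrid product the ``already-converted'' factors are the $\frac{n+1-k}{n-k}d_{k-1}d_{k-1}^*$ ones and the ``not-yet-converted'' ones are $d_k^*d_k$, bounding the former by their exact norm. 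Everything else — the orthogonality of the two supports, the applicability of Lemma \ref{bound inner-product - partite case} via the $|A\cap B|<k+1$ condition, and the passage from the bilinear estimate to the operator-norm estimate — is a direct transcription of the non-partite argument and should present no trouble.
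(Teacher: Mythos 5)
Your overall plan — telescope as in Lemma \ref{product norm bound - non partite case}, replacing the single-link estimate by Lemma \ref{bound inner-product - partite case}, and noting the orthogonality and the $\vert A\cap B\vert=k<k+1$ condition — is exactly the route the paper has in mind (the paper gives no more detail than ``very similar, details left to the reader''). The single-factor operator-norm estimate $\Vert P_{X(U_{i},\dots,U_{k+i})}\bigl(d_k^*d_k-\tfrac{n+1-k}{n-k}d_{k-1}d_{k-1}^*\bigr)P_{X(U_{i-1},\dots,U_{k-1+i})}\Vert\le(n-k)(k+1)\lambda$ is correct. You also correctly sense that the naive bound $\Vert A_i'\Vert\le\tfrac{n+1-k}{n-k}(k+1)$ does \emph{not} reproduce the stated constant; that observation is sound and important.

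Where the proposal stalls is precisely at that point, and the gap is genuine: as you note, $\tfrac{n+1-k}{n-k}(k+1)>k+1$, and it also exceeds $k+2$ once $k+1>n-k$, so neither crude replacement gives the required $(k+1)^{n-k-j}$ factors. What is missing is the partite-specific estimate for the ``converted'' factor. Use Lemma \ref{P d d^* P lemma} to insert the intermediate projection $P_{X(U_{i},\dots,U_{k-1+i})}$, i.e.
$$P_{X(U_{i},\dots,U_{k+i})}\,d_{k-1}d_{k-1}^*\,P_{X(U_{i-1},\dots,U_{k-1+i})}
= \bigl(P_{X(U_{i},\dots,U_{k+i})}\,d_{k-1}\,P_{X(U_{i},\dots,U_{k-1+i})}\bigr)\bigl(P_{X(U_{i},\dots,U_{k-1+i})}\,d_{k-1}^*\,P_{X(U_{i-1},\dots,U_{k-1+i})}\bigr).$$
Each of the two restricted operators has norm at most $\tfrac{1}{\sqrt{n-k+1}}$: for the left factor, for $\alpha$ supported on $X(U_{i},\dots,U_{k-1+i})$ one has $d_{k-1}\alpha(\sigma)=\alpha(\sigma\setminus(\sigma\cap U_{k+i}))$ on $X(U_{i},\dots,U_{k+i})$, and Proposition \ref{weight in partite complex prop} applied to a $(k-1)$-simplex gives $\sum_{u\in S_{k+i}}m(\eta\cup\{u\})=\tfrac{1}{n-k+1}m(\eta)$, whence $\Vert P d_{k-1}P\,\alpha\Vert^2\le\tfrac{1}{n-k+1}\Vert\alpha\Vert^2$; the right factor is treated symmetrically with Cauchy--Schwarz in the average over $v\in U_{i-1}$. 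Hence $\Vert P_{X(U_{i},\dots,U_{k+i})}\,d_{k-1}d_{k-1}^*\,P_{X(U_{i-1},\dots,U_{k-1+i})}\Vert\le\tfrac{1}{n-k+1}$, so $\Vert A_i'\Vert\le\tfrac{n+1-k}{n-k}\cdot\tfrac{1}{n-k+1}=\tfrac{1}{n-k}\le 1\le k+1$. With $\Vert A_i'\Vert\le k+1$ and $\Vert A_i\Vert\le k+2$ the telescope gives $(n-k)(k+1)\lambda\sum_{j=1}^{n-k}(k+2)^{j-1}(k+1)^{n-k-j}=\lambda(n-k)\bigl((k+1)(k+2)^{n-k}-(k+1)^{n-k+1}\bigr)$, which is the stated bound. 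So the missing ingredient is the partite norm estimate above; without it the argument does not close.
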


\begin{proof}
The proof is very similar to that of Lemma \ref{product norm bound - non partite case}, when Lemma \ref{bound inner-product - partite case} is used instead of Lemma \ref{bound inner-product - non partite case} - we leave the details to the reader.
\end{proof}

\begin{remark}
One can actually derive a slightly better bound above: one can show that
\begin{dmath*}
\left\Vert \prod_{i=1}^{n-k} P_{X(U_{i},...,U_{k+i})} d_k^* d_k P_{X(U_{i-1},...,U_{k-1+i})}  - \prod_{i=1}^{n-k} P_{X(U_{i},...,U_{k+i})} \left( \dfrac{n+1-k}{n-k} d_{k-1} d_{k-1}^* \right) P_{X(U_{i-1},...,U_{k-1+i})} \right\Vert \leq \\
 \lambda \dfrac{(k+1)(k+2)^{n-k} (n-k)^{n-k} = (k+1)^{n+1-k} (n+1-k)^{n-k}}{(n-k)^{n-k-2} (n-2k-1)}.
\end{dmath*}
However, for the sake of simplicity, we will use the simpler bound stated in Lemma \ref{product norm bound - partite case}.
\end{remark}

Similarly to Corollary \ref{product norm bound of indicator functions - non partite case coro}, the above Lemma in the partite case implies the following:
\begin{corollary}
\label{product norm bound of indicator functions - partite case coro}
Let $X$ be as above and let $0 \leq k \leq n-1$. Assume that $X$ is $(n+1)$-partite with sides $S_0,...,S_n$. Also assume that there is a constant $0< \lambda < 1$ such that for every $\tau \in X(k-1)$, $\Spec ((M')_{\tau,0}^+) \subseteq [-1, \lambda] \cup \lbrace 1 \rbrace$, then for every sets $U_0 \subseteq S_0,...,U_n \subseteq S_n$,
\begin{dmath*}
\left\vert \left\langle  \left(\left( \prod_{i=1}^{n-k} P_{X(U_{i},...,U_{k+i})} d_k^* d_k \right) - \left(\prod_{i=1}^{n-k} P_{X(U_{i},...,U_{k+i})} \left( \dfrac{n+1-k}{n-k} d_{k-1} d_{k-1}^* \right) \right) \right) \chi_{X(U_0,...,U_k)}, \chi_{X(U_{n-k},...,U_n)} \right\rangle \right\vert \leq 
\lambda (n-k) ((k+1)(k+2)^{n-k} - (k+1)^{n-k+1} ) \sqrt{m(X(U_0,...,U_k)) m(X(U_{n-k},...,U_n))} \leq
\lambda (n-k) ((k+1)(k+2)^{n-k} - (k+1)^{n-k+1} ) \sqrt{m(U_0) m(U_n)}.
\end{dmath*}
\end{corollary}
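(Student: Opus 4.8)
The plan is to mirror exactly the structure of the proof of Corollary \ref{product norm bound of indicator functions - non partite case coro}, substituting Lemma \ref{product norm bound - partite case} for Lemma \ref{product norm bound - non partite case} at the key step. First I would observe that since $U_0 \subseteq S_0$, the indicator $\chi_{X(U_0,...,U_k)}$ is fixed by the projection $P_{X(U_0,...,U_k)}$, and likewise $\chi_{X(U_{n-k},...,U_n)} = P_{X(U_{n-k},...,U_n)} \chi_{X(U_{n-k},...,U_n)}$; here the only new input over the non-partite case is that the hypothesis $U_i \subseteq S_i$ guarantees the sets $U_0,\dots,U_n$ are pairwise disjoint (they lie in distinct sides), so that all the projections $P_{X(U_i,\dots,U_{k+i})}$ and the operator identities of Section \ref{Random walks on sets of simplices determined by vertex sets sec} apply verbatim.

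Next I would insert the chain of intermediate projections, rewriting
$$\left( \prod_{i=1}^{n-k} P_{X(U_{i},...,U_{k+i})} d_k^* d_k \right) \chi_{X(U_0,...,U_k)} = \left( \prod_{i=1}^{n-k} P_{X(U_{i},...,U_{k+i})} d_k^* d_k P_{X(U_{i-1},...,U_{k-1+i})} \right) \chi_{X(U_0,...,U_k)},$$
and similarly for the $\tfrac{n+1-k}{n-k} d_{k-1} d_{k-1}^*$ product, using that each $\chi$ in the telescoping chain is supported on the appropriate $X(U_{i-1},\dots,U_{k-1+i})$ so the inserted projections act as the identity. Then Cauchy--Schwarz bounds the inner product by the operator norm of the difference of the two telescoping products times $\Vert \chi_{X(U_0,...,U_k)} \Vert \, \Vert \chi_{X(U_{n-k},...,U_n)} \Vert$. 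Applying Lemma \ref{product norm bound - partite case} gives the operator-norm factor $\lambda (n-k)((k+1)(k+2)^{n-k} - (k+1)^{n-k+1})$, and $\Vert \chi_{X(U_0,...,U_k)} \Vert^2 = m(X(U_0,...,U_k))$, $\Vert \chi_{X(U_{n-k},...,U_n)} \Vert^2 = m(X(U_{n-k},...,U_n))$ by definition of the inner product. Finally, the monotonicity $m(X(U_0,...,U_k)) \leq m(U_0)$ and $m(X(U_{n-k},...,U_n)) \leq m(U_n)$ (which follows because $X(U_0,\dots,U_k)$ maps into $U_0$ under the face map and $m$ is balanced, exactly as in the non-partite corollary) yields the second, weaker inequality.

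The only real point requiring care — and the step I would flag as the main obstacle, though it is a mild one — is confirming that Lemma \ref{product norm bound - partite case} applies with the present hypotheses: that lemma is stated for $X$ $(n+1)$-partite with sides $S_0,\dots,S_n$ and $U_i \subseteq S_i$ and spectral bound $\Spec((M')^+_{\tau,0}) \subseteq [-1,\lambda] \cup \{1\}$ for all $\tau \in X(k-1)$, which is precisely what is assumed here, so the application is immediate. Beyond that, the proof is a direct transcription of the non-partite argument with $d_{k-1}d_{k-1}^*$ replaced by $\tfrac{n+1-k}{n-k} d_{k-1}d_{k-1}^*$ and the constant carrying the extra factor $(n-k)$; I would write it as a short paragraph of three or four displayed lines, pointing to Corollary \ref{product norm bound of indicator functions - non partite case coro} for the routine details and leaving the parallel verification to the reader if space is tight.
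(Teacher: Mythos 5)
Your proposal is correct and takes precisely the route the paper has in mind: the paper's own proof of this corollary consists of the single sentence ``The proof is similar to that of Corollary \ref{product norm bound of indicator functions - non partite case coro} and we leave it to the reader,'' and you have filled in exactly those details, replacing Lemma \ref{product norm bound - non partite case} by Lemma \ref{product norm bound - partite case}, noting that $U_i \subseteq S_i$ supplies the pairwise disjointness needed for the projection/telescoping identities, inserting the intermediate projections, applying Cauchy--Schwarz, and using $\Vert\chi_{X(\cdot)}\Vert^2 = m(X(\cdot))$ together with $m(X(U_0,\dots,U_k)) \leq m(U_0)$, $m(X(U_{n-k},\dots,U_n)) \leq m(U_n)$.
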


\begin{proof}
The proof is similar to that of Corollary \ref{product norm bound of indicator functions - non partite case coro} and we leave it to the reader.
\end{proof}

Last, below we will need this following additional Lemma:
\begin{lemma}
\label{M^-_0 on U_0,...,U_n lemma}
Let $X$ be as above. For every pairwise disjoint sets $U_0,...,U_n \subseteq X(0)$, the following holds:
$$\left\langle \left( \prod_{i=1}^{n} P_{X(U_{i})} d_{-1} d_{-1}^* \right) \chi_{X(U_0)}, \chi_{X(U_n)} \right\rangle = \dfrac{m(U_0)...m(U_n)}{(m(X(0)))^n}.$$
\end{lemma}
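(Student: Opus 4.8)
The plan is to unwind the operator product by repeatedly applying Lemma~\ref{d* lemma} together with the definitions of $d_{-1}$, $d_{-1}^*$, and the projections $P_{X(U_i)}$, and to show by induction on the length of the product that the relevant cochain is always a constant multiple of an indicator function. First I would record the base facts. Since $X(-1) = \lbrace \emptyset \rbrace$, the differential $d_{-1} : C^{-1}(X,\mathbb{R}) \to C^0(X,\mathbb{R})$ sends a scalar $c$ (the value at $\emptyset$) to the constant function $c \cdot \mathbbm{1}$, and by Lemma~\ref{d* lemma} its adjoint $d_{-1}^* : C^0(X,\mathbb{R}) \to C^{-1}(X,\mathbb{R})$ is $d_{-1}^* \psi (\emptyset) = \sum_{\lbrace v \rbrace \in X(0)} \frac{m(\lbrace v \rbrace)}{m(X(0))} \psi(\lbrace v \rbrace)$, i.e.\ the $m$-average over the vertices. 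Consequently $d_{-1} d_{-1}^* \psi = \left( \frac{1}{m(X(0))} \sum_{v} m(\lbrace v \rbrace) \psi(\lbrace v \rbrace) \right) \mathbbm{1} = (m(X(0))) \cdot M_0^- \psi$ in the notation of Remark~\ref{M^-_0, M'^+_0 remark}, but it is cleaner here to work directly: $d_{-1}d_{-1}^*$ takes any $\psi \in C^0$ to the constant function whose value is $\langle \psi, \mathbbm{1}\rangle / m(X(0))$.

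Next I would run the induction. Set $\phi_0 = \chi_{X(U_0)} = \chi_{U_0}$, and for $1 \le j \le n$ define $\phi_j = P_{X(U_j)} d_{-1} d_{-1}^* \phi_{j-1}$. The claim is that $\phi_j = c_j \chi_{U_j}$ for the scalars $c_j = m(U_0) m(U_1) \cdots m(U_{j-1}) / (m(X(0)))^j$. Indeed, given $\phi_{j-1} = c_{j-1}\chi_{U_{j-1}}$, applying $d_{-1}^*$ gives the scalar $\frac{1}{m(X(0))}\sum_v m(\lbrace v\rbrace) c_{j-1}\chi_{U_{j-1}}(\lbrace v\rbrace) = \frac{c_{j-1} m(U_{j-1})}{m(X(0))}$; then $d_{-1}$ turns this into the constant function of that value, and $P_{X(U_j)}$ restricts it to $\chi_{U_j}$ times that constant, yielding $\phi_j = \frac{c_{j-1}m(U_{j-1})}{m(X(0))}\chi_{U_j}$, which matches the recursion for $c_j$. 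Unwinding, $\phi_n = \frac{m(U_0)\cdots m(U_{n-1})}{(m(X(0)))^n}\chi_{U_n}$.

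Finally I would take the inner product with $\chi_{X(U_n)} = \chi_{U_n}$: since $\langle \chi_{U_n}, \chi_{U_n}\rangle = \sum_{v \in U_n} m(\lbrace v\rbrace) = m(U_n)$, we get
\[
\left\langle \left( \prod_{i=1}^{n} P_{X(U_{i})} d_{-1} d_{-1}^* \right) \chi_{X(U_0)}, \chi_{X(U_n)} \right\rangle = \langle \phi_n, \chi_{U_n}\rangle = \frac{m(U_0)\cdots m(U_{n-1})}{(m(X(0)))^n}\, m(U_n) = \frac{m(U_0)\cdots m(U_n)}{(m(X(0)))^n},
\]
as asserted. The only mild subtlety is bookkeeping the product convention $\prod_{i=1}^n A_i = A_n \cdots A_1$ so that the factors are applied in the order $i = 1, 2, \dots, n$ to $\chi_{X(U_0)}$, and confirming that at each stage the support condition imposed by $P_{X(U_i)}$ is compatible with $U_i$ being nonempty and disjoint from the others; I expect this indexing check to be the main (and still minor) obstacle, everything else being a direct computation.
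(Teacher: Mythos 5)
Your proof is correct and follows essentially the same approach as the paper's: the paper likewise observes that $d_{-1}d_{-1}^* \chi_{X(U_i)}$ is the constant function $m(U_i)/m(X(0))$ and then runs the same induction to get $\left(\prod_{i=1}^{k} P_{X(U_i)} d_{-1} d_{-1}^*\right)\chi_{X(U_0)} = \frac{m(U_0)\cdots m(U_{k-1})}{(m(X(0)))^k}\chi_{X(U_k)}$ before taking the final inner product. You've simply spelled out the identification $m(\emptyset) = m(X(0))$ and the action of $d_{-1}$, $d_{-1}^*$, $P_{X(U_j)}$ a bit more explicitly than the paper does.
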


\begin{proof}
We note that by the definition of $d_{-1}$ and by the computation of $d_{-1}^*$ in Lemma \ref{d* lemma},  it holds that for every $i$
$$(d_{-1} d_{-1}^*) \chi_{X(U_k)} \equiv \dfrac{m(U_i)}{m(X(0))}.$$
Therefore, by induction on $k$, for every $1 \leq k \leq n$,
$$\left( \prod_{i=1}^{k} P_{X(U_{i})} d_{-1} d_{-1}^* \right) \chi_{X(U_0)} = \dfrac{m(U_0)... m(U_{k-1})}{(m(X(0)))^{k}} \chi_{X(U_k)},$$
and in particular 
$$\left( \prod_{i=1}^{n} P_{X(U_{i})} d_{-1} d_{-1}^* \right) \chi_{X(U_0)} = \dfrac{m(U_0)... m(U_{n-1})}{(m(X(0)))^{n}} \chi_{X(U_n)}.$$
Therefore 
\begin{dmath*}
\left\langle \left( \prod_{i=1}^{n} P_{X(U_{i})} d_{-1} d_{-1}^* \right) \chi_{X(U_0)}, \chi_{X(U_n)} \right\rangle = 
\dfrac{m(U_0)... m(U_{n-1})}{(m(X(0)))^{n}} \left\langle \chi_{X(U_n)}, \chi_{X(U_n)} \right\rangle =
\dfrac{m(U_0)...m(U_n)}{(m(X(0)))^n},
\end{dmath*}
as needed.
\end{proof}

\section{Mixing}
\label{Mixing sec}
\begin{theorem}[Mixing Theorem - non-partite complexes]
\label{Mixing Theorem - non-partite complexes}
Let $X$ be as above and assume that there is a constant $0< \lambda < 1$ such that for every $0 \leq k \leq n-1$ and every $\tau \in X(k-1)$, $\Spec ((M')_{\tau,0}^+) \subseteq [-\lambda, \lambda] \cup \lbrace 1 \rbrace$, then for every pairwise disjoint sets $U_0,...,U_n \subseteq X(0)$,
$$\left\vert m(X(U_0,...,U_n)) - \dfrac{m(U_0) m (U_1) ... m(U_n)}{m(X(0))^{n}} \right\vert \leq C_n \lambda \min_{0 \leq i < j \leq n} \sqrt{m(U_i) m(U_j)},$$
where 
$$C_n = \sum_{k=0}^{n-1} ((k+1)(k+2)^{n-k} - (k+1)^{n-k+1} ).$$
\end{theorem}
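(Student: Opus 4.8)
The plan is to express $m(X(U_0,\dots,U_n))$ as an inner product of indicator cochains and then telescope between consecutive pairs of operators $d_k^*d_k$ and $d_{k-1}d_{k-1}^*$. Concretely, first I would observe that
$$m(X(U_0,\dots,U_n)) = \left\langle \left(\prod_{i=1}^{n-(n-1)} P_{X(U_i,\dots,U_{n-1+i})} d_{n-1}^* d_{n-1}\right)\chi_{X(U_0,\dots,U_{n-1})}, \chi_{X(U_1,\dots,U_n)}\right\rangle$$
up to a normalizing constant; more precisely, for $k=n-1$ the product over $i$ has a single factor, and a direct computation with Lemma \ref{connection between d*d and M lemma} and Lemma \ref{d* lemma} shows $\langle P_{X(U_1,\dots,U_n)} d_{n-1}^* d_{n-1} \chi_{X(U_0,\dots,U_{n-1})}, \chi_{X(U_1,\dots,U_n)}\rangle$ equals $m(X(U_0,\dots,U_n))$ (the $n$-simplices get counted with the right weights since each such simplex $\sigma$ contributes $m(\sigma)$). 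So the target quantity is the $k=n-1$ instance of the bilinear expression appearing in Corollary \ref{inner-product coro} and Corollary \ref{product norm bound of indicator functions - non partite case coro}.

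Next I would run a descending induction / telescoping argument on $k$. Starting from $k=n-1$ and the expression $\langle (\prod_{i=1}^{n-k} P_{X(U_i,\dots,U_{k+i})} d_k^* d_k)\chi_{X(U_0,\dots,U_k)}, \chi_{X(U_{n-k},\dots,U_n)}\rangle$, I use Corollary \ref{product norm bound of indicator functions - non partite case coro} to replace $d_k^*d_k$ by $d_{k-1}d_{k-1}^*$ inside the product, incurring an additive error bounded by $\lambda((k+1)(k+2)^{n-k}-(k+1)^{n-k+1})\sqrt{m(U_0)m(U_n)}$. Then Corollary \ref{inner-product coro} (applied at level $k-1$) rewrites $\langle(\prod_{i=1}^{n-k} P_{X(U_i,\dots,U_{k+i})} d_{k-1}d_{k-1}^*)\chi_{X(U_0,\dots,U_k)}, \chi_{X(U_{n-k},\dots,U_n)}\rangle$ exactly as $\langle(\prod_{i=1}^{n-(k-1)} P_{X(U_i,\dots,U_{k-1+i})} d_{k-1}^*d_{k-1})\chi_{X(U_0,\dots,U_{k-1})}, \chi_{X(U_{n-k+1},\dots,U_n)}\rangle$, which is precisely the $k-1$ instance of the same expression. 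Iterating from $k=n-1$ down to $k=0$ and summing the errors via the triangle inequality gives total error $\le \lambda\sqrt{m(U_0)m(U_n)}\sum_{k=0}^{n-1}((k+1)(k+2)^{n-k}-(k+1)^{n-k+1}) = C_n\lambda\sqrt{m(U_0)m(U_n)}$, while the $k=0$ term of the expression, after one more application of Corollary \ref{inner-product coro}, becomes $\langle(\prod_{i=1}^n P_{X(U_i)} d_{-1}d_{-1}^*)\chi_{X(U_0)}, \chi_{X(U_n)}\rangle$, which by Lemma \ref{M^-_0 on U_0,...,U_n lemma} equals exactly $m(U_0)\cdots m(U_n)/m(X(0))^n$. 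This yields the bound with $\sqrt{m(U_0)m(U_n)}$ on the right.

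Finally, to get $\min_{0\le i<j\le n}\sqrt{m(U_i)m(U_j)}$ rather than just the $(0,n)$ pair, I would note that the whole argument is symmetric under relabeling: the construction only used that $U_0$ is the first set and $U_n$ the last in the telescoping chain, and one is free to apply the same telescoping starting the $d^*d/dd^*$ chain at any two indices $i<j$ by permuting the roles of the sets (the quantity $m(X(U_0,\dots,U_n))$ and the main term are symmetric in the $U_i$'s up to permutation, and $C_n$ does not depend on which pair is singled out). Hence the error can be taken to be the minimum over all pairs. I expect the main obstacle to be bookkeeping: verifying that the index ranges in the nested products line up correctly across the two corollaries at each step of the descent (the shift from $n-k$ factors at level $k$ to $n-k+1$ factors at level $k-1$, and the corresponding shift of the supporting vertex-set tuples), and checking the base cases $k=n-1$ (relating the single-factor expression to $m(X(U_0,\dots,U_n))$) and $k=-1$ (Lemma \ref{M^-_0 on U_0,...,U_n lemma}) fit the pattern. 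The analytic content is entirely contained in the already-established Corollary \ref{product norm bound of indicator functions - non partite case coro}, so no new estimates are needed.
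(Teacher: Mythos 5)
Your proposal is correct and matches the paper's proof: the paper also writes $m(X(U_0,\dots,U_n))$ as $\langle d_{n-1}^*d_{n-1}\chi_{X(U_0,\dots,U_{n-1})},\chi_{X(U_1,\dots,U_n)}\rangle$, telescopes via Corollary \ref{inner-product coro} and Corollary \ref{product norm bound of indicator functions - non partite case coro} to the $d_{-1}d_{-1}^*$ term computed in Lemma \ref{M^-_0 on U_0,...,U_n lemma}, and handles the $\min_{i<j}$ by the same relabeling symmetry (phrased as a WLOG at the outset). The only nit is that the final step you attribute to Corollary \ref{inner-product coro} is actually one more invocation of Corollary \ref{product norm bound of indicator functions - non partite case coro} (the $k=0$ replacement of $d_0^*d_0$ by $d_{-1}d_{-1}^*$), but the bookkeeping and total error count are right.
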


\begin{proof}
Let $U_0,...,U_n \subseteq X(0)$ be pairwise disjoint sets. Without loss of generality, we can assume that $\min_{0 \leq i < j \leq n} \sqrt{m(U_i) m(U_j)} =  \sqrt{m(U_0) m(U_n)}$. We note that for every $\sigma \in X(n)$,
$$(d_{n-1} \chi_{X(U_0,...,U_{n-1})} (\sigma)) (d_{n-1} \chi_{X(U_1,...,U_{n})} (\sigma))= \begin{cases}
1 & \sigma \in X(U_0,...,U_n) \\
0 & \text{otherwise}
\end{cases}.$$
Therefore
\begin{dmath*}
\langle (d^*_{n-1} d_{n-1}) \chi_{X(U_0,...,U_{n-1})}, \chi_{X(U_1,...,U_{n})} \rangle = \langle d_{n-1} \chi_{X(U_0,...,U_{n-1})}, d_{n-1} \chi_{X(U_1,...,U_{n})} \rangle = m(X(U_0,...,U_n)).
\end{dmath*}
By Lemma \ref{M^-_0 on U_0,...,U_n lemma} it follows that
\begin{dmath*}
m(X(U_0,...,U_n)) - \dfrac{m(U_0) m (U_1) ... m(U_n)}{m(X(0))^{n}}  =  {\left\langle (d^*_{n-1} d_{n-1}) \chi_{X(U_0,...,U_{n-1})}, \chi_{X(U_1,...,U_{n})} \right\rangle - \left\langle \left( \prod_{i=1}^{n} P_{X(U_{i})} d_{-1} d_{-1}^* \right) \chi_{X(U_0)}, \chi_{X(U_n)} \right\rangle.}
\end{dmath*}
Next, we will show by induction that for every $0 \leq l \leq n-1$,
\begin{dmath*}
\left\langle \left(\prod_{i=1}^{n-l} P_{X(U_{i},...,U_{l+i})} d_l^* d_l \right) \chi_{X(U_0,...,U_{l})}, \chi_{X(U_{n-l},...,U_{n})} \right\rangle - 
\left\langle \left( \prod_{i=1}^{n} P_{X(U_{i})} d_{-1} d_{-1}^* \right) \chi_{X(U_0)}, \chi_{X(U_n)} \right\rangle =
\sum_{k=0}^l \left\langle  \left(\left( \prod_{i=1}^{n-k} P_{X(U_{i},...,U_{k+i})} d_k^* d_k \right) - \left(\prod_{i=1}^{n-k} P_{X(U_{i},...,U_{k+i})} d_{k-1} d_{k-1}^* \right) \right) \chi_{X(U_0,...,U_k)}, \chi_{X(U_{n-k},...,U_n)} \right\rangle. 
\end{dmath*}
For $l=0$ this equality is trivial. Assume that it holds for $l$, then for $l+1$, we have that
\begin{dmath*}
\left\langle \left(\prod_{i=1}^{n-l} P_{X(U_{i},...,U_{l+1+i})} d_{l+1}^* d_{l+1} \right) \chi_{X(U_0,...,U_{l+1})}, \chi_{X(U_{n-l-1},...,U_{n})} \right\rangle \\
- \left\langle \left( \prod_{i=1}^{n} P_{X(U_{i})} d_{-1} d_{-1}^* \right) \chi_{X(U_0)}, \chi_{X(U_n)} \right\rangle = \\
\left\langle \left(\prod_{i=1}^{n-l} P_{X(U_{i},...,U_{l+1+i})} d_{l+1}^* d_{l+1} \right) \chi_{X(U_0,...,U_{l+1})}, \chi_{X(U_{n-l-1},...,U_{n})} \right\rangle \\
- \left\langle \left(\prod_{i=1}^{n-l} P_{X(U_{i},...,U_{l+1+i})} d_{l} d_{l}^* \right) \chi_{X(U_0,...,U_{l+1})}, \chi_{X(U_{n-l-1},...,U_{n})} \right\rangle \\
+ \left\langle \left(\prod_{i=1}^{n-l} P_{X(U_{i},...,U_{l+1+i})} d_{l} d_{l}^* \right) \chi_{X(U_0,...,U_{l+1})}, \chi_{X(U_{n-l-1},...,U_{n})} \right\rangle \\
{- \left\langle \left( \prod_{i=1}^{n} P_{X(U_{i})} d_{-1} d_{-1}^* \right) \chi_{X(U_0)}, \chi_{X(U_n)} \right\rangle =^{\text{Corollary } \ref{inner-product coro}}} \\
\left\langle \left( \left(\prod_{i=1}^{n-l} P_{X(U_{i},...,U_{l+1+i})} d_{l+1}^* d_{l+1} \right) - \left(\prod_{i=1}^{n-l} P_{X(U_{i},...,U_{l+1+i})} d_{l} d_{l}^* \right) \right) \chi_{X(U_0,...,U_{l+1})}, \chi_{X(U_{n-l-1},...,U_{n})} \right\rangle \\
 + \left\langle \left(\prod_{i=1}^{n-l} P_{X(U_{i},...,U_{l+i})} d_l^* d_l \right) \chi_{X(U_0,...,U_{l})}, \chi_{X(U_{n-l},...,U_{n})} \right\rangle \\
 - 
\left\langle \left( \prod_{i=1}^{n} P_{X(U_{i})} d_{-1} d_{-1}^* \right) \chi_{X(U_0)}, \chi_{X(U_n)} \right\rangle,
\end{dmath*}
and the assertion follows by the induction assumption. Therefore, for $l=n-1$, it follows that
\begin{dmath*}
m(X(U_0,...,U_n)) - \dfrac{m(U_0) m (U_1) ... m(U_n)}{m(X(0))^{n}}  =  
{\left\langle (d^*_{n-1} d_{n-1}) \chi_{X(U_0,...,U_{n-1})}, \chi_{X(U_1,...,U_{n})} \right\rangle - \left\langle \left( \prod_{i=1}^{n} P_{X(U_{i})} d_{-1} d_{-1}^* \right) \chi_{X(U_0)}, \chi_{X(U_n)} \right\rangle} = 
\sum_{k=0}^{n-1} \left\langle  \left(\left( \prod_{i=1}^{n-k} P_{X(U_{i},...,U_{k+i})} d_k^* d_k \right) - \left(\prod_{i=1}^{n-k} P_{X(U_{i},...,U_{k+i})} d_{k-1} d_{k-1}^* \right) \right) \chi_{X(U_0,...,U_k)}, \chi_{X(U_{n-k},...,U_n)} \right\rangle. 
\end{dmath*}
Therefore
\begin{dmath*}
{\left\vert m(X(U_0,...,U_n)) - \dfrac{m(U_0) m (U_1) ... m(U_n)}{m(X(0))^{n}} \right\vert } \leq  
{\sum_{k=0}^{n-1} \left\vert \left\langle  \left(\left( \prod_{i=1}^{n-k} P_{X(U_{i},...,U_{k+i})} d_k^* d_k \right) - \left(\prod_{i=1}^{n-k} P_{X(U_{i},...,U_{k+i})} d_{k-1} d_{k-1}^* \right) \right) \chi_{X(U_0,...,U_k)}, \chi_{X(U_{n-k},...,U_n)} \right\rangle \right\vert} \leq^{\text{Corollary } \ref{product norm bound of indicator functions - non partite case coro}} 
\sum_{k=0}^{n-1} \lambda ((k+1)(k+2)^{n-k} - (k+1)^{n-k+1} ) \sqrt{m(U_0) m(U_n)} = C_n \lambda  \sqrt{m(U_0) m(U_n)}.
\end{dmath*}
\end{proof}

The above Mixing Theorem can be simplified in the case where $X$ is regular and the weight $m$ is the homogeneous weight - this simplified version is Theorem \ref{mixing thm for general complexes - intro} that appeared in the introduction:
\begin{corollary}
\label{mixing for regular coro}
Let $X$ be a pure $n$-dimensional simplicial complex with the homogeneous weight function. Assume that $X$ is $K$-regular in the following sense: for every $\lbrace v \rbrace \in X(0)$, $v$ is contained in exactly $K$ $n$-dimensional simplices of $X$, i.e., $m(\lbrace v \rbrace) = n! K$. Assume that there is a constant $0< \lambda < 1$ such that for every $0 \leq k \leq n-1$ and every $\tau \in X(k-1)$, $\Spec ((M')_{\tau,0}^+) \subseteq [-\lambda, \lambda] \cup \lbrace 1 \rbrace$, then for every pairwise disjoint sets $U_0,...,U_n \subseteq X(0)$,
$$\left\vert \vert X(U_0,...,U_n) \vert - \dfrac{n! K}{\vert X(0) \vert^n} \vert U_0 \vert \vert U_1 \vert ... \vert U_n \vert \right\vert \leq C_n n! \lambda K  \min_{0 \leq i < j \leq n} \sqrt{ \vert U_i \vert \vert U_j \vert},$$
and
$$\left\vert \frac{\vert X(U_0,...,U_n) \vert}{\vert X(n) \vert} - (n+1)! \dfrac{\vert U_0 \vert \vert U_1 \vert ... \vert U_n \vert}{\vert X(0) \vert^{n+1}} \right\vert \leq C_n (n+1)! \lambda  \min_{0 \leq i < j \leq n} \sqrt{ \frac{\vert U_i \vert \vert U_j \vert}{\vert X(0) \vert^2}},$$
where 
$$C_n = \sum_{k=0}^{n-1} ((k+1)(k+2)^{n-k} - (k+1)^{n-k+1} ).$$ 
\end{corollary}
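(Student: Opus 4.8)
The plan is to obtain Corollary~\ref{mixing for regular coro} as a direct specialization of the Mixing Theorem~\ref{Mixing Theorem - non-partite complexes} to the homogeneous weight function; the spectral hypothesis of the corollary is verbatim the hypothesis of the theorem, so the only work is to rewrite the weighted estimate in terms of cardinalities.

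First I would pin down the values of $m$. Applying Proposition~\ref{weight in n dim simplices} with $k=0$ to a vertex $\{v\}$ gives $\tfrac{1}{n!}m(\{v\}) = |\{\sigma\in X(n): v\in\sigma\}| = K$, so $m(\{v\})=n!K$; summing over vertices, $m(U_i)=n!K\,|U_i|$ for any $U_i\subseteq X(0)$ and $m(X(0))=n!K\,|X(0)|$. Since $m(\sigma)=1$ for $\sigma\in X(n)$ and $X(U_0,\dots,U_n)\subseteq X(n)$, we have $m(X(U_0,\dots,U_n))=|X(U_0,\dots,U_n)|$ and $m(X(n))=|X(n)|$. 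Finally Proposition~\ref{weight of X^k compared to X^l} with $k=0,\ l=n$ yields $m(X(0))=(n+1)!\,m(X(n))$, hence $|X(n)|=\tfrac{1}{(n+1)!}m(X(0))=\tfrac{K|X(0)|}{n+1}$.

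Substituting into Theorem~\ref{Mixing Theorem - non-partite complexes}, the product term is
\[
\frac{m(U_0)\cdots m(U_n)}{m(X(0))^{n}}
=\frac{(n!K)^{n+1}\,|U_0|\cdots|U_n|}{(n!K)^{n}\,|X(0)|^{n}}
=\frac{n!K\,|U_0|\cdots|U_n|}{|X(0)|^{n}},
\]
and the error term is $C_n\lambda\min_{i<j}\sqrt{m(U_i)m(U_j)}=C_n\,n!\,\lambda K\min_{i<j}\sqrt{|U_i||U_j|}$; together with $m(X(U_0,\dots,U_n))=|X(U_0,\dots,U_n)|$ this is precisely the first displayed inequality. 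For the second inequality I would divide the first one by $|X(n)|=\tfrac{K|X(0)|}{n+1}$ and use $\tfrac{n!K}{|X(n)|}=\tfrac{(n+1)!}{|X(0)|}$, which turns the product term into $(n+1)!\,\tfrac{|U_0|\cdots|U_n|}{|X(0)|^{n+1}}$ and the error term into $C_n(n+1)!\lambda\min_{i<j}\sqrt{\tfrac{|U_i||U_j|}{|X(0)|^{2}}}$, exactly as claimed.

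I do not expect any real obstacle: the statement is a corollary and its proof is bookkeeping. The only mildly delicate point is correctly identifying $m(\{v\})$ and $m(X(n))$ with the relevant counting quantities via Propositions~\ref{weight in n dim simplices} and~\ref{weight of X^k compared to X^l}; once this is done both inequalities follow by substitution (and, for the second, a single division by $|X(n)|$).
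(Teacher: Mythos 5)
Your proof is correct and takes essentially the same route as the paper: both derive the corollary by substituting the homogeneous-weight identities $m(\{v\})=n!K$, $m(U_i)=n!K|U_i|$, $m(X(U_0,\dots,U_n))=|X(U_0,\dots,U_n)|$, and $m(X(0))=(n+1)!|X(n)|=n!K|X(0)|$ into Theorem~\ref{Mixing Theorem - non-partite complexes}. You spell out where these identities come from (Propositions~\ref{weight in n dim simplices} and~\ref{weight of X^k compared to X^l}) whereas the paper simply asserts them, but the argument is the same.
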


\begin{proof}
The corollary follows from Theorem \ref{Mixing Theorem - non-partite complexes} by the following equalities:
$$m(X(U_0,...,U_n)) = \vert X(U_0,...,U_n) \vert, m(X(0)) = (n+1)! \vert X (n) \vert = n! K \vert X(0) \vert, $$
$$\forall 0 \leq i \leq n, m(U_i) = n! K \vert U_i \vert.$$ 
\end{proof}

Combining the above Corollary with the spectral descent result of Corollary \ref{Explicit spec descent coro}, yields Corollary \ref{Mixing + spec descent coro - general} that appeared in the introduction.

In the partite case, we have a similar mixing theorem, but the mixing depends only on the one-sided spectral gaps:

\begin{theorem}[Mixing Theorem - partite complexes]
\label{Mixing Theorem - partite complexes}
Let $X$ be as above and assume that $X$ is $(n+1)$-partite with sides $S_0,...,S_n$. Assume there is a constant $0< \lambda < 1$ such that for every $0 \leq k \leq n-1$ and every $\tau \in X(k-1)$, $\Spec ((M')_{\tau,0}^+) \subseteq [-1, \lambda] \cup \lbrace 1 \rbrace$, then for every sets $U_0 \subseteq S_0,...,U_n \subseteq S_n$,
$$\left\vert \dfrac{m(X(U_0,...,U_n))}{m(X(n))} - \dfrac{m(U_0) ... m(U_n)}{m(S_0) ... m(S_n)} \right\vert \leq C_n \lambda \min_{0 \leq i < j \leq n} \sqrt{\dfrac{m(U_i) m(U_j)}{m(S_i) m(S_j)}},$$
where 
$$C_n = \sum_{k=0}^{n-1} n! \dfrac{(n+1-k)^{n-k}}{(n-k-1)!}((k+1)(k+2)^{n-k} - (k+1)^{n-k+1} ).$$
\end{theorem}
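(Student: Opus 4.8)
The plan is to run the proof of Theorem~\ref{Mixing Theorem - non-partite complexes} essentially verbatim, feeding in the \emph{partite} localization estimate (Corollary~\ref{product norm bound of indicator functions - partite case coro}, which rests on Lemma~\ref{bound inner-product - partite case} and the one-sided spectral symmetry of Proposition~\ref{symmetry of spectrum in partite case - prop}) wherever the non-partite one was used, and then absorbing the extra multiplicative factors $\frac{n+1-k}{n-k}$ that the partite setting produces. As a first step I relabel the sides so that the minimum is attained at $(i,j)=(0,n)$, and record, exactly as in the non-partite proof, that $\langle d_{n-1}^* d_{n-1}\chi_{X(U_0,\dots,U_{n-1})},\chi_{X(U_1,\dots,U_n)}\rangle=m(X(U_0,\dots,U_n))$.

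The core is a descent on $k$ from $n-1$ down to $0$. Writing $a_k=\langle(\prod_{i=1}^{n-k}P_{X(U_i,\dots,U_{k+i})}d_k^* d_k)\chi_{X(U_0,\dots,U_k)},\chi_{X(U_{n-k},\dots,U_n)}\rangle$, so that $a_{n-1}=m(X(U_0,\dots,U_n))$, Corollary~\ref{inner-product coro} identifies $\langle(\prod_{i=1}^{n-k}P_{X(U_i,\dots,U_{k+i})}d_{k-1}d_{k-1}^*)\chi_{X(U_0,\dots,U_k)},\chi_{X(U_{n-k},\dots,U_n)}\rangle$ with $a_{k-1}$, while Corollary~\ref{product norm bound of indicator functions - partite case coro} bounds the difference between $\prod P\,d_k^* d_k$ and $\prod P\,(\frac{n+1-k}{n-k}d_{k-1}d_{k-1}^*)$ applied to these indicator functions. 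The one genuinely new feature: $\frac{n+1-k}{n-k}$ is a scalar multiplying each of the $n-k$ factors, so it contributes $(\frac{n+1-k}{n-k})^{n-k}$; the telescoping is therefore multiplicative, $a_k=(\frac{n+1-k}{n-k})^{n-k}a_{k-1}+\varepsilon_k$ with $|\varepsilon_k|\le\lambda(n-k)((k+1)(k+2)^{n-k}-(k+1)^{n-k+1})\sqrt{m(U_0)m(U_n)}$. Bottoming out with Lemma~\ref{M^-_0 on U_0,...,U_n lemma}, which gives $a_{-1}=\frac{m(U_0)\cdots m(U_n)}{m(X(0))^n}$, and unrolling yields $m(X(U_0,\dots,U_n))=(\prod_{k=0}^{n-1}(\frac{n+1-k}{n-k})^{n-k})\frac{m(U_0)\cdots m(U_n)}{m(X(0))^n}$ plus a sum of the $\varepsilon_k$ weighted by partial products of these scalars.

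It remains to do the algebra. The telescoping identity $\prod_{j=1}^{m}(\frac{j+1}{j})^{j}=\frac{(m+1)^{m+1}}{(m+1)!}$ gives the total scalar $\prod_{k=0}^{n-1}(\frac{n+1-k}{n-k})^{n-k}=\frac{(n+1)^{n+1}}{(n+1)!}$ and the tail products $\prod_{l=k}^{n-1}(\frac{n+1-l}{n-l})^{n-l}=\frac{(n+1-k)^{n-k}}{(n-k)!}$. Plugging in $m(X(n))=\frac{m(X(0))}{(n+1)!}$ (Proposition~\ref{weight of X^k compared to X^l}) and $m(S_i)=\frac{m(X(0))}{n+1}$ (Corollary~\ref{weight of S_i coro}) and dividing through by $m(X(n))$, the main term becomes exactly $\frac{m(U_0)\cdots m(U_n)}{m(S_0)\cdots m(S_n)}$; writing $\sqrt{m(U_0)m(U_n)}=\frac{m(X(0))}{n+1}\sqrt{\frac{m(U_0)m(U_n)}{m(S_0)m(S_n)}}$, the weighted error sum collapses to $C_n\lambda\sqrt{\frac{m(U_0)m(U_n)}{m(S_0)m(S_n)}}$, the factor $\frac{(n+1-k)^{n-k}}{(n-k)!}\cdot(n-k)=\frac{(n+1-k)^{n-k}}{(n-k-1)!}$ explaining each summand of the displayed $C_n$.

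The main obstacle is precisely this last bookkeeping. In the non-partite proof the telescoping is additive and the error constant is simply the sum of the per-level bounds; here each per-level error $\varepsilon_k$ is amplified by the tail product of the scalars $(\frac{n+1-l}{n-l})^{n-l}$, and one must evaluate these products via the identity above and check that they combine with the $(n-k)$ in Corollary~\ref{product norm bound of indicator functions - partite case coro} to give exactly the stated $C_n$ (using $(\frac{n+1-l}{n-l})^{n-l}\ge 1$ to pass from the sharp tail product $\prod_{l=k+1}^{n-1}$ to the cleaner $\prod_{l=k}^{n-1}$). One should also verify — it is immediate — that at every step the index windows $\{i-1,\dots,k-1+i\}$ and $\{i,\dots,k+i\}$ differ, so the cross-support hypothesis of Lemma~\ref{bound inner-product - partite case} is met, which is what lets the entire argument proceed using only the one-sided bound $\lambda$ afforded by Proposition~\ref{symmetry of spectrum in partite case - prop}.
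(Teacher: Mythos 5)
Your proof is correct and follows the same strategy as the paper's (which itself only sketches the partite case by pointing to the non-partite argument). You have in fact filled in the bookkeeping more carefully than the paper does: the exact unrolling gives the tail coefficient $\prod_{l=k+1}^{n-1}\bigl(\tfrac{n+1-l}{n-l}\bigr)^{n-l}=\tfrac{(n-k)^{n-k}}{(n-k)!}$, and you correctly observe that enlarging this to $\prod_{l=k}^{n-1}(\cdot)=\tfrac{(n+1-k)^{n-k}}{(n-k)!}$ (via $c_k\ge 1$) is what reproduces the $\tfrac{(n+1-k)^{n-k}}{(n-k-1)!}$ factors in the stated $C_n$, whereas the paper's displayed identity asserts the larger coefficient as an exact equality without flagging the loss.
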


\begin{proof}
The proof is similar to the proof in the non-partite case and therefore we will omit some details. We start by proving that 
\begin{dmath*}
m(X(U_0,...,U_n)) - \dfrac{(n+1)^{n-1}}{n!}\dfrac{m(U_0) m (U_1) ... m(U_n)}{m(X(0))^{n}}  =  
{\left\langle (d^*_{n-1} d_{n-1}) \chi_{X(U_0,...,U_{n-1})}, \chi_{X(U_1,...,U_{n})} \right\rangle - \dfrac{(n+1-k)^{n-k}}{(n-k)!} \left\langle \left( \prod_{i=1}^{n} P_{X(U_{i})} d_{-1} d_{-1}^* \right) \chi_{X(U_0)}, \chi_{X(U_n)} \right\rangle} = 
\sum_{k=0}^{n-1} \dfrac{(n+1-k)^{n-k}}{(n-k)!}\left\langle  \left(\left( \prod_{i=1}^{n-k} P_{X(U_{i},...,U_{k+i})} d_k^* d_k \right) - \left(\prod_{i=1}^{n-k} P_{X(U_{i},...,U_{k+i})} \left(\dfrac{n+1-k}{n-k} d_{k-1} d_{k-1}^* \right) \right) \right) \chi_{X(U_0,...,U_k)}, \chi_{X(U_{n-k},...,U_n)} \right\rangle. 
\end{dmath*}
We note that by Corollary \ref{weight of S_i coro}, for every $i$, $ m(S_i) = \frac{m(X(0))}{n+1}$, therefore
\begin{dmath*}
m(X(U_0,...,U_n)) - \dfrac{(n+1)^{n}}{n!}\dfrac{m(U_0) m (U_1) ... m(U_n)}{m(X(0))^{n}} = 
m(X(U_0,...,U_n)) - \dfrac{1}{n!}\dfrac{m(U_0) m (U_1) ... m(U_n)}{m(S_1)...m(S_n)}.
\end{dmath*}
Also note that by Proposition \ref{weight in n dim simplices}, $(n+1)! m(X(n)) = m(X(0)) = (n+1) m(S_i)$ for every $0 \leq i \leq n$, i.e., $n! m(X(n))= m(S_i)$. Therefore dividing the above equality by $m(X(n))$ yields that 
\begin{dmath*}
{\dfrac{1}{m(X(n))} \left( m(X(U_0,...,U_n)) - \dfrac{(n+1)^{n}}{n!}\dfrac{m(U_0) m (U_1) ... m(U_n)}{m(X(0))^{n}} \right)} = 
\dfrac{m(X(U_0,...,U_n))}{m(X(n))} - \dfrac{m(U_0) m (U_1) ... m(U_n)}{m(S_0) m(S_1)...m(S_n)}.
\end{dmath*}
Therefore
\begin{dmath*}
\left\vert \dfrac{m(X(U_0,...,U_n))}{m(X(n))} - \dfrac{m(U_0) m (U_1) ... m(U_n)}{m(S_0) m(S_1)...m(S_n)} \right\vert = 
\left\vert {\dfrac{1}{m(X(n))} \left( m(X(U_0,...,U_n)) - \dfrac{(n+1)^{n}}{n!}\dfrac{m(U_0) m (U_1) ... m(U_n)}{m(X(0))^{n}} \right)} \right\vert \leq 
\dfrac{1}{m(X(n))} \sum_{k=0}^{n-1} \dfrac{(n+1-k)^{n-k}}{(n-k)!} \\
\left\vert \left\langle  \left(\left( \prod_{i=1}^{n-k} P_{X(U_{i},...,U_{k+i})} d_k^* d_k \right) - \left(\prod_{i=1}^{n-k} P_{X(U_{i},...,U_{k+i})} \left(\dfrac{n+1-k}{n-k} d_{k-1} d_{k-1}^* \right) \right) \right) \chi_{X(U_0,...,U_k)},\\
 \chi_{X(U_{n-k},...,U_n)} \right\rangle \right\vert \leq^{\text{Corollary } \ref{product norm bound of indicator functions - partite case coro}} 
\dfrac{1}{m(X(n))}\lambda \sum_{k=0}^{n-1} \dfrac{(n+1-k)^{n-k}}{(n-k)!}  (n-k) ((k+1)(k+2)^{n-k} - (k+1)^{n-k+1} ) \sqrt{m(U_0) m(U_n)} = 
\lambda \sum_{k=0}^{n-1} n! \dfrac{(n+1-k)^{n-k}}{(n-k-1)!}   ((k+1)(k+2)^{n-k} - (k+1)^{n-k+1} ) \sqrt{\dfrac{m(U_0) m(U_n)}{m(S_0) m(S_n)}} = 
C_n \lambda \sqrt{\dfrac{m(U_0) m(U_n)}{m(S_0) m(S_n)}}.
\end{dmath*}
\end{proof}

As above, this Mixing Theorem can be simplified in the case where $X$ is partite-regular and the weight $m$ is the homogeneous weight - this simplified version is Theorem \ref{mixing thm for partite complexes - intro} that appeared in the introduction:
\begin{corollary}
\label{mixing for partite regular coro}
Let $X$ be a pure $n$-dimensional simplicial complex with the homogeneous weight function. Assume that $X$ is $(n+1)$-partite and partite-regular in the following sense: for every $0 \leq i \leq n$ there is a constant $K_i$ such that for every $\lbrace v \rbrace \in S_i$, $v$ is contained in exactly $K_i$ $n$-dimensional simplices of $X$, i.e., $m(\lbrace v \rbrace) = n! K_i$. Assume there is a constant $0< \lambda < 1$ such that for every $0 \leq k \leq n-1$ and every $\tau \in X(k-1)$, $\Spec ((M')_{\tau,0}^+) \subseteq [-1, \lambda] \cup \lbrace 1 \rbrace$, then for every sets $U_0 \subseteq S_0,...,U_n \subseteq S_n$,
$$\left\vert \dfrac{\vert X(U_0,...,U_n) \vert}{\vert X(n) \vert} - \dfrac{\vert U_0 \vert ... \vert U_n \vert}{\vert S_0 \vert ... \vert S_n \vert} \right\vert \leq C_n \lambda \min_{0 \leq i < j \leq n} \sqrt{\dfrac{\vert U_i \vert \vert U_j \vert}{\vert S_i \vert \vert S_j \vert}},$$
where 
$$C_n = \sum_{k=0}^{n-1} n! \dfrac{(n+1-k)^{n-k}}{(n-k-1)!}((k+1)(k+2)^{n-k} - (k+1)^{n-k+1} ).$$
\end{corollary}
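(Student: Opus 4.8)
The plan is to obtain this statement as an immediate specialization of Theorem \ref{Mixing Theorem - partite complexes} to the homogeneous weight function $m = m_h$; the only work is translating the weight quantities appearing there into the cardinalities appearing here. First I would record the dictionary between weights and sizes. Since $m(\sigma) = 1$ for every $\sigma \in X(n)$, we have $m(X(U_0,\dots,U_n)) = \vert X(U_0,\dots,U_n)\vert$ and $m(X(n)) = \vert X(n)\vert$. By Proposition \ref{weight in n dim simplices} applied with $k = 0$, for every vertex $\lbrace v\rbrace$ one has $m(\lbrace v\rbrace) = n!\,\vert\lbrace \sigma \in X(n) : v \in \sigma\rbrace\vert$, which by partite-regularity equals $n! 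K_i$ whenever $v \in S_i$. Hence for any $U \subseteq S_i$ we get $m(U) = \sum_{v \in U} m(\lbrace v\rbrace) = n! K_i \vert U\vert$, and in particular $m(S_i) = n! K_i \vert S_i\vert$.

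The key point is that the common per-side normalization $n! K_i$ cancels in every ratio that occurs in the conclusion of Theorem \ref{Mixing Theorem - partite complexes}: for each $i$,
$$\frac{m(U_i)}{m(S_i)} = \frac{n! K_i \vert U_i\vert}{n! K_i \vert S_i\vert} = \frac{\vert U_i\vert}{\vert S_i\vert},$$
so $\prod_{i=0}^n \frac{m(U_i)}{m(S_i)} = \prod_{i=0}^n \frac{\vert U_i\vert}{\vert S_i\vert}$ and, for every pair $0 \le i < j \le n$, $\sqrt{\frac{m(U_i) m(U_j)}{m(S_i) m(S_j)}} = \sqrt{\frac{\vert U_i\vert \vert U_j\vert}{\vert S_i\vert \vert S_j\vert}}$. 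Likewise $\frac{m(X(U_0,\dots,U_n))}{m(X(n))} = \frac{\vert X(U_0,\dots,U_n)\vert}{\vert X(n)\vert}$. Substituting these three identities into the bound provided by Theorem \ref{Mixing Theorem - partite complexes} yields exactly the claimed inequality, with the same constant $C_n$. The spectral hypothesis required by that theorem — namely $\Spec((M')^+_{\tau,0}) \subseteq [-1,\lambda]\cup\lbrace 1\rbrace$ for all $0 \le k \le n-1$ and all $\tau \in X(k-1)$ — is assumed here verbatim, and connectedness of the links is standing throughout Section \ref{Links and localization sec}, so there is nothing further to verify.

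There is no genuine obstacle here; this is essentially the partite analogue of Corollary \ref{mixing for regular coro} and the argument is pure bookkeeping. The one subtlety worth flagging is that the regularity constants $K_0,\dots,K_n$ may a priori differ across sides, so one cannot factor a single global constant out of $m$ as in the non-partite case — but since Theorem \ref{Mixing Theorem - partite complexes} is phrased entirely in terms of the scale-invariant ratios $m(U_i)/m(S_i)$ and $m(X(U_0,\dots,U_n))/m(X(n))$, the per-side factors $n! K_i$ cancel individually within each ratio, and the translation goes through unchanged.
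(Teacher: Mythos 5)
Your proposal is correct and follows exactly the paper's route: specialize Theorem \ref{Mixing Theorem - partite complexes} to the homogeneous weight, translate via $m(X(U_0,\dots,U_n)) = \vert X(U_0,\dots,U_n)\vert$, $m(U_i) = n!K_i\vert U_i\vert$, and $m(S_i) = n!K_i\vert S_i\vert$, and observe that the per-side constants $n!K_i$ cancel in every ratio appearing in the bound. Your writeup is just a more detailed account of the same one-line bookkeeping argument the paper gives.
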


\begin{proof}
The corollary follows from Theorem \ref{Mixing Theorem - partite complexes} by the following equalities:
$$m(X(U_0,...,U_n)) = \vert X(U_0,...,U_n) \vert, $$
$$\forall 0 \leq i \leq n, m(U_i) = n! K_i \vert U_i \vert, m(S_i)=n! K_i \vert S_i \vert.$$ 
\end{proof}

Combining the above Corollary with the spectral descent result of Corollary \ref{Explicit spec descent coro}, yields Corollary \ref{Mixing + spec descent coro - partite} that appeared in the introduction.

\section{Geometric overlapping}
\label{Geometric overlapping sec}
In \cite{Grom}, Gromov defined the geometric overlapping property for complexes. We'll define a weighted analogue of this property. We shall need the following definition first:
\begin{definition}
Let $X$ be an $n$-dimensional simplicial complex and let $f : X (0) \rightarrow \mathbb{R}^n$ be a map. The geometric extension of $f$ is the unique map $\widetilde{f} : X \rightarrow \mathbb{R}^n$ that extends $f$ affinely, i.e., for every $0 \leq k \leq n$ and every $\lbrace v_0,...,v_k \rbrace \in X (k)$, $\widetilde{f}$ maps $\lbrace v_0,...,v_k \rbrace$ to the simplex in $\mathbb{R}^n$ spanned by $f (v_0),...,f (v_k)$.
\end{definition}

Using the above definition, the geometrical overlapping property is defined as follows:
\begin{definition}
\label{geometric overlapping definition}
Let $X$ be a $n$-dimensional simplicial complex and let $\varepsilon >0$. We shall say that $X$ has $\varepsilon$-geometric overlapping if for every map $f :  X(0) \rightarrow \mathbb{R}^n$ and for the geometric  extension $\widetilde{f}$ of $f$, there is a point $x \in \mathbb{R}^n$ such that 
$$\vert \lbrace \sigma \in X(n) : x \in \widetilde{f} (\sigma)   \rbrace \vert \geq \varepsilon \vert  X (n) \vert .$$ 
In light of this definition, we denote
$$\overlap (X) = \min_{f: X(0) \rightarrow \mathbb{R}^n} \max_{x \in \mathbb{R}^n} \dfrac{\vert \lbrace \lbrace v_0,...,v_n \rbrace \in X(n) : x \in \conv \lbrace f(v_0),...,f(v_n) \rbrace \rbrace\vert}{\vert X(n) \vert},$$
and with this notation,  $X$ has $\varepsilon$-geometric overlapping if and only if $\overlap (X) \geq \varepsilon$. 
\end{definition}

As noted in \cite{Par}, a result by Pach can be used in order to derive geometric overlapping from mixing. Namely, Pach \cite{Pach} proved the following:
\begin{theorem}
For every $n$, there is a constant $\mathcal{P}_n$ such that for any sets $S_0,...,S_n$ of points in $\mathbb{R}^n$, there are subsets $U_0 \subseteq S_0,...,U_n \subseteq S_n$ with $\vert U_i \vert \geq \mathcal{P}_n \vert S_i \vert$ for every $0 \leq i \leq n$ and 
$$\bigcap_{x_0 \in U_0,...,x_n \in U_n} \conv \lbrace x_0,...,x_n \rbrace \neq \emptyset.$$  
\end{theorem}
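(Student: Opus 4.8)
This is Pach's positive-fraction (``homogeneous'') selection theorem, and the plan is to prove it along the classical route via the same-type lemma, reducing the geometric content to a single hard ingredient. It is convenient to first assume that $S_0 \cup \dots \cup S_n$ is in general position. To justify this, perturb each point of $S_i$ by at most $t$ to obtain sets $S_i^{(t)}$ in general position; granting the theorem for these, we get subsets $U_i^{(t)} \subseteq S_i^{(t)}$ (naturally indexed by the abstract point sets) with $\vert U_i^{(t)} \vert \ge \mathcal{P}_n \vert S_i \vert$ and a common point $p_t \in \conv\{x_0,\dots,x_n\}$ for every rainbow transversal of the $U_i^{(t)}$. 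There are only finitely many possible index choices for the $U_i$, and each $p_t$ lies (up to $O(t)$) in the fixed compact set $\conv(\bigcup_i S_i^{(0)})$, so along a subsequence $t \to 0$ the index choice stabilizes and $p_t \to p$; since membership in the convex hull of converging vertex sets is preserved in the limit, $p$ witnesses the conclusion for the original sets. So from now on the points are in general position.

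\emph{Same-type lemma.} Next I would apply the same-type lemma of B\'ar\'any and Valtr: there is a constant $c_n > 0$ and subsets $U_i \subseteq S_i$ with $\vert U_i \vert \ge c_n \vert S_i \vert$ such that all rainbow transversals $(x_0,\dots,x_n)$, $x_i \in U_i$, have the same order type --- equivalently, for every $J \subseteq \{0,\dots,n\}$ the orientation of the sub-transversal indexed by $J$ depends only on $J$, not on the chosen representatives. This is the technical heart of the argument: it is obtained by an iterated ham-sandwich (``partition into parts of the same type'') construction, and it is the source both of the existence of the constant $\mathcal{P}_n$ and of its rapid decay in $n$. It is worth stressing why no purely combinatorial shortcut is available: the set of ``good'' transversals (say those whose simplex contains a point produced by the colorful first-selection lemma) forms a dense $(n+1)$-partite hypergraph, but dense multipartite hypergraphs need not contain complete sub-hypergraphs with all parts a positive fraction of the ambient parts, so the geometry of same-type configurations is genuinely needed.

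\emph{Extracting a common point.} On the same-type subsets all $\prod_i \vert U_i \vert$ rainbow simplices are combinatorially equivalent, and it remains to find one point inside all of them, at the cost of shrinking each $U_i$ by a further constant factor. I would do this by induction on $n$, peeling off one colour class and one dimension at a time: choose, within the common order type, a hyperplane separating the distinguished colour from the rest, which both fixes the side of that colour's vertex and lets the inductive hypothesis act on the projections of the remaining colours; Helly's theorem then glues the per-transversal constraints into a single common point, since same-typeness makes the family of rainbow simplices coherently nested so that checking bounded subfamilies suffices. The base case $n=1$ is explicit: taking $U_0$ to be the lower half of $S_0$ and $U_1$ the upper half of $S_1$ (with medians $m_0 \le m_1$), every rainbow segment contains $[m_0,m_1]$, so $m_0$ is a common point and $\mathcal{P}_1 = 1/2$ works. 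Folding all the constant-factor losses into a single constant yields the final $\mathcal{P}_n$.

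The main obstacle is the same-type lemma: it is a nontrivial theorem in its own right and the only place where the dimension $n$ enters in an essential, quantitatively costly way. Granted it, the reduction to general position is a routine compactness argument and the extraction of the common point is a ``soft'' Helly-plus-induction argument once the rigid combinatorial type has been fixed.
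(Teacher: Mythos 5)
This statement is not proved in the paper at all: it is Pach's first selection theorem, cited as an external result (reference \cite{Pach}) and used as a black box to deduce geometric overlapping from the mixing theorems. There is therefore no in-paper proof to compare your proposal against, and the only thing to assess is whether your sketch would, on its own terms, actually establish the theorem.

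Your overall skeleton (reduce to general position, invoke the B\'ar\'any--Valtr same-type lemma, then extract a common point) is the right shape, and your remark explaining why no purely combinatorial shortcut exists is correct and well put. But the third step, the extraction of a common point, is where the real content lies and your description of it does not hold up. You propose to ``peel off one colour class at a time'' by choosing a separating hyperplane and then to close the argument by Helly, asserting that ``same-typeness makes the family of rainbow simplices coherently nested so that checking bounded subfamilies suffices.'' This last assertion is exactly what needs to be proved and is not a routine Helly application: the family of rainbow simplices is $\prod_i \vert U_i \vert$ convex bodies in $\mathbb{R}^n$, Helly reduces the problem to $(n+1)$-element subfamilies of this list, and same-typeness alone does not obviously guarantee that such subfamilies intersect. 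The induction-plus-projection description is likewise too loose to check; in particular, with only $n+1$ colour classes in $\mathbb{R}^n$ the ``well-separation'' characterization of same type (which requires at least $n+2$ classes) is not directly available, so the existence of the separating hyperplane you want to peel with has itself to be argued. Most tellingly, the actual workhorse in the known proofs --- a positive-fraction selection lemma (first/second selection theorem or a coloured Tverberg statement) producing a point $a$ lying in a constant fraction of all rainbow simplices --- appears in your sketch only parenthetically and plays no role in your argument. The standard route then applies the same-type machinery to the $(n+2)$-tuple $S_0,\dots,S_n,\lbrace a\rbrace$ and uses a counting argument to ensure that the selected same-type parts are ones in which $a$ lies \emph{inside} all rainbow simplices rather than outside all of them; this last subtlety (inside versus outside) is precisely what a bare ``induction + Helly'' does not address. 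So while the first two paragraphs of your proposal are on the right track, the extraction step as written contains a genuine gap and omits the selection-lemma ingredient that makes the proof go through.
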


Using our Corollaries \ref{mixing for regular coro}, \ref{mixing for partite regular coro} proven above, we can use this result to prove the following:

\begin{theorem}[Geometric overlapping - non-partite case]
Let $X$ be a pure $n$-dimensional simplicial complex with the homogeneous weight function. Assume that $X$ is $K$-regular in the following sense: for every $\lbrace v \rbrace \in X(0)$, $v$ is contained in exactly $K$ $n$-dimensional simplices of $X$, i.e., $m(\lbrace v \rbrace) = n! K$. Assume that there is a constant $0< \lambda < 1$ such that for every $0 \leq k \leq n-1$ and every $\tau \in X(k-1)$, $\Spec ((M')_{\tau,0}^+) \subseteq [-\lambda, \lambda] \cup \lbrace 1 \rbrace$, then 
$$\overlap (X) \geq n! \mathcal{P}_n \left( \left( \frac{\mathcal{P}_n}{n+1} \right)^n  - (n+1) C_n \lambda \right),$$
where $\mathcal{P}_n$ is the constant of Pach's theorem and
$$C_n = \sum_{k=0}^{n-1} ((k+1)(k+2)^{n-k} - (k+1)^{n-k+1} ).$$ 
\end{theorem}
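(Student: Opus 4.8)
The plan is to deduce geometric overlapping from the mixing estimate of Corollary~\ref{mixing for regular coro} together with the theorem of Pach quoted above, in the style of \cite{Par}. Fix an arbitrary map $f : X(0) \to \mathbb{R}^n$; we must exhibit a point of $\mathbb{R}^n$ lying in the images of many $n$-simplices. By a routine perturbation argument we may assume $f$ is injective (a generic small perturbation of $f$ is injective, and ``$y \in \widetilde f(\sigma)$'' is a point-in-closed-simplex incidence that is preserved under passing to limits, so the bound for injective maps passes to all maps); alternatively, Pach's theorem applies verbatim to finite multisets of points and no reduction is needed. Partition $X(0) = S_0 \sqcup \dots \sqcup S_n$ into $n+1$ ``sides'' whose cardinalities differ pairwise by at most one. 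For the estimates below it is harmless to assume $(n+1)\mid |X(0)|$, so that $|S_i| = |X(0)|/(n+1)$ for all $i$; when $|X(0)|$ is large the floor/ceiling corrections are lower order, and the finitely many small complexes can be checked separately.

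Apply Pach's theorem to the $n+1$ point sets $f(S_0),\dots,f(S_n)$: it produces subsets $U_i \subseteq S_i$ with $|U_i| \geq \mathcal{P}_n |S_i|$ and a point $x$ lying in $\conv\{f(v_0),\dots,f(v_n)\}$ for every transversal $v_0 \in U_0,\dots,v_n \in U_n$. Replacing each $U_i$ by an arbitrary subset of size exactly $\lceil \mathcal{P}_n |S_i|\rceil$ preserves this (the common point only has to survive for a smaller family of transversals), so we may take $|U_0| = \dots = |U_n| = \lceil \mathcal{P}_n |X(0)|/(n+1)\rceil$. The crucial observation is that for every $\sigma = \{v_0,\dots,v_n\} \in X(U_0,\dots,U_n)$, i.e.\ every $n$-simplex of $X$ with one vertex in each $U_i$, the image $\widetilde f(\sigma) = \conv\{f(v_0),\dots,f(v_n)\}$ is one of those transversal hulls and hence contains $x$. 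Therefore $\max_{y}|\{\sigma \in X(n) : y \in \widetilde f(\sigma)\}| \geq |X(U_0,\dots,U_n)|$, and since $f$ was arbitrary it suffices to bound $|X(U_0,\dots,U_n)|/|X(n)|$ from below, uniformly over the $U_i$ produced in this way.

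Now mixing enters. The $U_i$ are pairwise disjoint since the $S_i$ are, so the second inequality of Corollary~\ref{mixing for regular coro} gives
$$\frac{|X(U_0,\dots,U_n)|}{|X(n)|} \geq (n+1)!\,\frac{|U_0|\cdots|U_n|}{|X(0)|^{n+1}} - C_n (n+1)!\,\lambda \min_{0\leq i<j\leq n}\sqrt{\frac{|U_i||U_j|}{|X(0)|^2}}.$$
Substituting $|U_i| = \mathcal{P}_n |X(0)|/(n+1)$ turns the main term into $(n+1)!\,\mathcal{P}_n^{n+1}/(n+1)^{n+1} = n!\,\mathcal{P}_n\big(\mathcal{P}_n/(n+1)\big)^n$ and the error term into $C_n (n+1)!\,\lambda\,\mathcal{P}_n/(n+1) = C_n\, n!\,\lambda\,\mathcal{P}_n$, which is at most $n!\,\mathcal{P}_n (n+1) C_n \lambda$. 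Hence
$$\overlap(X) \geq n!\,\mathcal{P}_n\Big(\big(\tfrac{\mathcal{P}_n}{n+1}\big)^n - (n+1)C_n\lambda\Big),$$
which is the assertion; in fact the estimate above is slightly stronger, with $C_n\lambda$ in place of $(n+1)C_n\lambda$.

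The main obstacle is bookkeeping rather than any conceptual difficulty: one must ensure that the sets handed to and returned by Pach's theorem are honest finite point sets (hence the reduction to injective $f$, or the multiset formulation), and one must shrink the $U_i$ to a common size so that $\min_{i<j}\sqrt{|U_i||U_j|}$ is governed by $\mathcal{P}_n |X(0)|/(n+1)$ rather than merely by $|X(0)|/(n+1)$ — omitting this step loses a factor of $\mathcal{P}_n$ in the error and no longer yields the stated bound. A minor additional point is the divisibility of $|X(0)|$ by $n+1$, which affects only lower-order terms once $|X(0)|$ is moderately large.
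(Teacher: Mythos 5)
Your proof is correct and follows essentially the same approach the paper sketches (partition $X(0)$ into $n+1$ nearly equal parts, apply Pach's theorem, feed the resulting $U_i$ into Corollary \ref{mixing for regular coro}); the paper's own "proof" for the non-partite case is literally a one-sentence instruction to repeat the partite argument with the other mixing corollary, so you have filled in the intended computation faithfully. Your observation that the argument in fact yields the sharper constant $n!\,\mathcal{P}_n\bigl((\mathcal{P}_n/(n+1))^n - C_n\lambda\bigr)$ is accurate: mimicking the partite proof's factoring step, one pulls out $\sqrt{|U_0||U_1|}/|X(0)| \geq \mathcal{P}_n/(n+1)$ and bounds the remaining bracket by $(\mathcal{P}_n/(n+1))^n - C_n\lambda$, with no additional $(n+1)$ loss, so the stated theorem appears to carry a harmless slack factor of $(n+1)$ on the error term. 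Your alternative of shrinking the $U_i$ to a common size is an equally valid way to control $\min_{i<j}\sqrt{|U_i||U_j|}$, and you are right that simply bounding that minimum by $|X(0)|/(n+1)$ without either device would not recover the stated constant.
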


\begin{theorem}[Geometric overlapping - partite case]
Let $X$ be a pure $n$-dimensional simplicial complex with the homogeneous weight function. Assume that $X$ is $(n+1)$-partite and partite-regular in the following sense: for every $0 \leq i \leq n$ there is a constant $K_i$ such that for every $\lbrace v \rbrace \in S_i$, $v$ is contained in exactly $K_i$ $n$-dimensional simplices of $X$, i.e., $m(\lbrace v \rbrace) = n! K_i$. Assume there is a constant $0< \lambda < 1$ such that for every $0 \leq k \leq n-1$ and every $\tau \in X(k-1)$, $\Spec ((M')_{\tau,0}^+) \subseteq [-1, \lambda] \cup \lbrace 1 \rbrace$, then
$$\overlap (X) \geq \mathcal{P}_n (\mathcal{P}_n^n-C_n \lambda),$$
where $\mathcal{P}_n$ is the constant of Pach's theorem and
$$C_n = \sum_{k=0}^{n-1} n! \dfrac{(n+1-k)^{n-k}}{(n-k-1)!}((k+1)(k+2)^{n-k} - (k+1)^{n-k+1} ).$$
\end{theorem}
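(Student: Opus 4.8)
The plan is to combine the Pach's-theorem machinery with the mixing corollaries (Corollary \ref{mixing for regular coro} in the non-partite case, Corollary \ref{mixing for partite regular coro} in the partite case) exactly as indicated just before the statements. I would treat the partite case first, since it is cleaner, and then explain the modifications for the non-partite case. So fix a map $f : X(0) \to \mathbb{R}^n$; in the partite case $X(0)$ splits as $S_0 \cup \dots \cup S_n$, and we view $f$ as giving point sets $f(S_0),\dots,f(S_n)$ in $\mathbb{R}^n$. Apply Pach's theorem to these point sets to obtain subsets $U_i \subseteq S_i$ with $\vert U_i \vert \geq \mathcal{P}_n \vert S_i \vert$ and a common point $x \in \bigcap_{x_0 \in U_0,\dots,x_n \in U_n} \conv\{x_0,\dots,x_n\}$. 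The key point is that $x$ then lies in $\widetilde f(\sigma)$ for \emph{every} $\sigma \in X(U_0,\dots,U_n)$, so
$$
\max_{y \in \mathbb{R}^n} \frac{\vert\{\sigma \in X(n) : y \in \widetilde f(\sigma)\}\vert}{\vert X(n)\vert} \;\geq\; \frac{\vert X(U_0,\dots,U_n)\vert}{\vert X(n)\vert}.
$$

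Now I would lower-bound the right-hand side using mixing. By Corollary \ref{mixing for partite regular coro},
$$
\frac{\vert X(U_0,\dots,U_n)\vert}{\vert X(n)\vert} \;\geq\; \frac{\vert U_0\vert \cdots \vert U_n\vert}{\vert S_0\vert \cdots \vert S_n\vert} - C_n \lambda \min_{i<j}\sqrt{\frac{\vert U_i\vert\,\vert U_j\vert}{\vert S_i\vert\,\vert S_j\vert}}.
$$
Since $\vert U_i\vert \geq \mathcal{P}_n \vert S_i\vert$, the main term is at least $\mathcal{P}_n^{\,n+1}$; and since each ratio $\vert U_i\vert/\vert S_i\vert \leq 1$, the min-of-square-roots error factor is at most $1$, so the right-hand side is at least $\mathcal{P}_n^{\,n+1} - C_n\lambda$. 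Taking the minimum over all $f$ gives $\overlap(X) \geq \mathcal{P}_n^{\,n+1} - C_n\lambda = \mathcal{P}_n(\mathcal{P}_n^n - C_n\lambda)$, which is precisely the partite bound.

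For the non-partite case the only wrinkle is that $X(0)$ is not pre-partitioned, so I would first split $X(0)$ into $n+1$ sets $S_0,\dots,S_n$ as equally as possible (or just invoke that a random balanced partition works), getting $\vert S_i\vert \approx \vert X(0)\vert/(n+1)$, and then run Pach's theorem on $f(S_0),\dots,f(S_n)$ to get $U_i \subseteq S_i$ with $\vert U_i\vert \geq \mathcal{P}_n\vert S_i\vert \approx \tfrac{\mathcal{P}_n}{n+1}\vert X(0)\vert$. Applying the second displayed inequality of Corollary \ref{mixing for regular coro},
$$
\frac{\vert X(U_0,\dots,U_n)\vert}{\vert X(n)\vert} \;\geq\; (n+1)!\,\frac{\vert U_0\vert\cdots\vert U_n\vert}{\vert X(0)\vert^{n+1}} - C_n(n+1)!\,\lambda\,\min_{i<j}\sqrt{\frac{\vert U_i\vert\,\vert U_j\vert}{\vert X(0)\vert^2}},
$$
the main term is at least $(n+1)!\,(\mathcal{P}_n/(n+1))^{n+1}$ and the error factor $\min_{i<j}\sqrt{\vert U_i\vert\vert U_j\vert}/\vert X(0)\vert \leq 1$, so the bound becomes roughly $(n+1)!\big((\mathcal{P}_n/(n+1))^{n+1} - C_n\lambda\big)$. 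Pulling out a factor $\mathcal{P}_n/(n+1)$ and using $(n+1)!/(n+1) = n!$ matches the claimed $n!\,\mathcal{P}_n\big((\mathcal{P}_n/(n+1))^n - (n+1)C_n\lambda\big)$; I would double-check the exact placement of the $(n+1)$ factors against the constants in the corollary rather than trust the heuristic.

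The step I expect to require the most care is not conceptual but bookkeeping: reconciling the constants in the statement (the factors $n!$, $\mathcal{P}_n/(n+1)$, and the extra $(n+1)$ multiplying $C_n\lambda$ in the non-partite bound) with whichever normalization of the mixing corollary one uses — the first inequality of Corollary \ref{mixing for regular coro} counts $\vert X(U_0,\dots,U_n)\vert$ against $n!K/\vert X(0)\vert^n$, the second against $\vert X(n)\vert$, and one must also convert $\vert X(n)\vert$, $\vert X(0)\vert$ and $K$ consistently via $m(X(0)) = n!K\vert X(0)\vert = (n+1)!\vert X(n)\vert$. The genuinely substantive ingredient — that a single Pach point is covered by every rainbow simplex on the chosen subsets, turning a geometric-covering lower bound into a combinatorial rainbow-simplex count — is immediate from the definitions of $\widetilde f$ and $X(U_0,\dots,U_n)$, so once the constant-chasing is done the proof is complete.
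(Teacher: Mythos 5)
Your approach is the paper's approach (Pach's theorem applied to $f(S_0),\dots,f(S_n)$, then the mixing corollary to lower-bound $\vert X(U_0,\dots,U_n)\vert / \vert X(n)\vert$), but there is an algebraic slip at the very end that makes the bound you actually prove weaker than the one stated. You bound the error coefficient $\min_{i<j}\sqrt{\vert U_i\vert\vert U_j\vert/(\vert S_i\vert\vert S_j\vert)}$ by $1$, obtaining $\mathcal{P}_n^{n+1} - C_n\lambda$, and then assert that this \emph{equals} $\mathcal{P}_n(\mathcal{P}_n^n - C_n\lambda)$. It does not: $\mathcal{P}_n(\mathcal{P}_n^n - C_n\lambda) = \mathcal{P}_n^{n+1} - \mathcal{P}_n C_n\lambda$, and since Pach's constant satisfies $\mathcal{P}_n < 1$, you have $\mathcal{P}_n^{n+1} - C_n\lambda < \mathcal{P}_n^{n+1} - \mathcal{P}_n C_n\lambda$, i.e.\ what you proved is strictly weaker than the claimed statement.

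The fix is not to throw the error coefficient away but to factor it out. Say (WLOG) the minimum in the error term is attained at the pair $(0,1)$. Then
\begin{dmath*}
\dfrac{\vert U_0\vert\cdots\vert U_n\vert}{\vert S_0\vert\cdots\vert S_n\vert} - C_n\lambda\sqrt{\dfrac{\vert U_0\vert\vert U_1\vert}{\vert S_0\vert\vert S_1\vert}}
= \sqrt{\dfrac{\vert U_0\vert\vert U_1\vert}{\vert S_0\vert\vert S_1\vert}}\left(\sqrt{\dfrac{\vert U_0\vert\vert U_1\vert}{\vert S_0\vert\vert S_1\vert}}\cdot\dfrac{\vert U_2\vert\cdots\vert U_n\vert}{\vert S_2\vert\cdots\vert S_n\vert} - C_n\lambda\right)
\geq \mathcal{P}_n\left(\mathcal{P}_n^{n} - C_n\lambda\right),
\end{dmath*}
using $\vert U_i\vert/\vert S_i\vert \geq \mathcal{P}_n$ in both factors (when $\mathcal{P}_n^n - C_n\lambda < 0$ the statement is trivial, so one may assume the parenthesis is nonnegative). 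This is exactly what the paper does. The rest of your argument — that the Pach point $x$ lies in $\widetilde f(\sigma)$ for every rainbow simplex $\sigma \in X(U_0,\dots,U_n)$, giving $\overlap(X) \geq \vert X(U_0,\dots,U_n)\vert/\vert X(n)\vert$ — is correct and matches the paper. The same factoring device is what produces the $n!\,\mathcal{P}_n$ prefactor and the $(n+1)$ in the non-partite bound, which your heuristic estimate there also glosses over; so the constant-chasing you flagged as the risky step is indeed where your write-up currently goes wrong.
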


We consider the partite case more natural in this setting and therefore, we will prove overlapping in this case and only sketch the proof of the non-partite case leaving the details to the reader. 

\begin{proof}
\textbf{Partite complex:} Let $X$ be a $(n+1)$-partite, partite-regular complex with the spectral gap in the links assumed above. Let $S_0,...,S_n$ be the sides of $X$ and let $f: X(0) \rightarrow \mathbb{R}^n$ be some map. By Pach's theorem there are $U_0 \subseteq S_0,...,U_n \subseteq S_n$ with $\vert U_n \vert \geq \mathcal{P}_n \vert S_n \vert$ and 
$$\bigcap_{v_0 \in U_0,...,v_n \in U_n} \conv \lbrace f(v_0),...,f(v_n) \rbrace \neq \emptyset.$$
Showing that 
$$\vert X(U_0,...,U_n) \vert \geq  \vert X(n) \vert \mathcal{P}_n (\mathcal{P}_n^n - C_n \lambda ),$$
will complete the proof. Indeed, by Corollary \ref{mixing for partite regular coro}, 
$$\left\vert \dfrac{\vert X(U_0,...,U_n) \vert}{\vert X(n) \vert} - \dfrac{\vert U_0 \vert ... \vert U_n \vert}{\vert S_0 \vert ... \vert S_n \vert} \right\vert \leq C_n \lambda \min_{0 \leq i < j \leq n} \sqrt{\dfrac{\vert U_i \vert \vert U_j \vert}{\vert S_i \vert \vert S_j \vert}},$$
and therefore
\begin{dmath*}
\vert X(U_0,...,U_n) \vert \geq \vert X(n) \vert \left( \dfrac{\vert U_0 \vert ... \vert U_n \vert}{\vert S_0 \vert ... \vert S_n \vert} - C_n \lambda \min_{0 \leq i < j \leq n} \sqrt{\dfrac{\vert U_i \vert \vert U_j \vert}{\vert S_i \vert \vert S_j \vert}} \right) \geq \\
 \vert X(n) \vert \sqrt{\dfrac{\vert U_0 \vert \vert U_1 \vert}{\vert S_0 \vert \vert S_1 \vert}} \left( \sqrt{\dfrac{\vert U_0 \vert \vert U_1 \vert}{\vert S_0 \vert \vert S_1 \vert}} \dfrac{\vert U_2 \vert ... \vert U_n \vert}{\vert S_2 \vert ... \vert S_n \vert} - C_n \lambda \right) \geq 
 \vert X(n) \vert \mathcal{P}_n (\mathcal{P}_n^n - C_n \lambda ).
\end{dmath*}

\textbf{Non-partite complex:} First, arbitrarily divide $X(0)$ into $(n+1)$ disjoint sets $S_0,...,S_n$ of equal size and then repeat the same proof using Corollary \ref{mixing for regular coro} instead of Corollary \ref{mixing for partite regular coro}.
\end{proof}

Also, combining the above Theorems with the spectral descent result of Corollary \ref{Explicit spec descent coro}, yields the following:
\begin{corollary}
Let $X$ be a pure $n$-dimensional simplicial complex with the homogeneous weight function. Assume that $X$ is $K$-regular in the following sense: for every $\lbrace v \rbrace \in X(0)$, $v$ is contained in exactly $K$ $n$-dimensional simplices of $X$, i.e., $m(\lbrace v \rbrace) = n! K$. Assume that there is a constant $0< \lambda < 1$ such that for every $\tau \in X(n-2)$, $\Spec ((M')_{\tau,0}^+) \subseteq [-\frac{\lambda}{1+(n-1)\lambda}, \frac{\lambda}{1+(n-1)\lambda}] \cup \lbrace 1 \rbrace$, then 
$$\overlap (X) \geq n! \mathcal{P}_n \left( \left( \frac{\mathcal{P}_n}{n+1} \right)^n  - (n+1) C_n \lambda \right),$$
where $\mathcal{P}_n$ is the constant of Pach's theorem and
$$C_n = \sum_{k=0}^{n-1} ((k+1)(k+2)^{n-k} - (k+1)^{n-k+1} ).$$ 
\end{corollary}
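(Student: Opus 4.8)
The plan is to derive the final corollary purely formally by feeding the conclusion of Corollary \ref{Explicit spec descent coro} (the spectral descent result, stated in the appendix in the terminology of random walks) into the already-proved geometric overlapping statement in the non-partite case. Recall that the spectral descent result says: if $X$ is a pure $n$-dimensional complex with connected links and for every $\tau \in X(n-2)$ the spectrum of the simple random walk on the $1$-skeleton of $X_\tau$ lies in $[-\mu,\mu]\cup\{1\}$ with $\mu = \tfrac{\lambda}{1+(n-1)\lambda}$, then for every $0\le k\le n-1$ and every $\sigma\in X(k-1)$ the spectrum of $(M')^{+}_{\sigma,0}$ lies in $[-\lambda,\lambda]\cup\{1\}$. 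This is exactly the hypothesis required by the non-partite geometric overlapping theorem above.

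The steps, in order, are as follows. First, I would verify that the hypothesis of the present corollary — control of $\Spec((M')^+_{\tau,0})$ only for $\tau\in X(n-2)$, with the shrunken interval $[-\tfrac{\lambda}{1+(n-1)\lambda},\tfrac{\lambda}{1+(n-1)\lambda}]\cup\{1\}$ — is precisely the input hypothesis of Corollary \ref{Explicit spec descent coro}; this requires noting that $(M')^+_{\tau,0}$ for $\tau\in X(n-2)$ is the (non-lazy) weighted random walk on the $1$-skeleton of the $1$-dimensional link $X_\tau$, i.e. the simple random walk since $X_\tau$ is a graph, as recorded in Remark \ref{M^-_0, M'^+_0 remark}. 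Second, I would invoke that corollary to upgrade this single-level bound to the all-levels bound: for every $0\le k\le n-1$ and every $\tau\in X(k-1)$, $\Spec((M')^+_{\tau,0})\subseteq[-\lambda,\lambda]\cup\{1\}$. Third, with this in hand, the hypotheses of Theorem (Geometric overlapping -- non-partite case) above are met verbatim (the $K$-regularity and homogeneous weight hypotheses are carried over unchanged), so its conclusion $\overlap(X)\ge n!\,\mathcal{P}_n\bigl((\tfrac{\mathcal{P}_n}{n+1})^n - (n+1)C_n\lambda\bigr)$ holds with the same constant $C_n=\sum_{k=0}^{n-1}((k+1)(k+2)^{n-k}-(k+1)^{n-k+1})$, which is exactly the asserted bound. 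That completes the proof.

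There is essentially no genuine obstacle here: the corollary is a composition of two black boxes (spectral descent $+$ geometric overlapping from the uniform spectral gap), and the only thing to be careful about is bookkeeping — matching the interval $[-\tfrac{\lambda}{1+(n-1)\lambda},\tfrac{\lambda}{1+(n-1)\lambda}]$ in the hypothesis against whatever normalization Corollary \ref{Explicit spec descent coro} uses, and confirming that the descent result does not degrade the constant $\lambda$ that reappears in $C_n\lambda$ (it does not: the point of writing the hypothesis with the shrunken interval is exactly so that the output level-gap is the clean $\lambda$). If one wished, one could equivalently phrase the proof as: apply Corollary \ref{Mixing + spec descent coro - general} to obtain the mixing estimate directly, then run the Pach-theorem argument from the proof of the geometric overlapping theorem; but the shortest route is the two-line composition described above.

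\begin{proof}
By Remark \ref{M^-_0, M'^+_0 remark}, for $\tau \in X(n-2)$ the operator $(M')^+_{\tau,0}$ is the (non-lazy, weighted) random walk on the $1$-skeleton of the $1$-dimensional link $X_\tau$. Hence the hypothesis that $\Spec((M')^+_{\tau,0}) \subseteq [-\tfrac{\lambda}{1+(n-1)\lambda}, \tfrac{\lambda}{1+(n-1)\lambda}] \cup \lbrace 1 \rbrace$ for every $\tau \in X(n-2)$ is exactly the hypothesis of Corollary \ref{Explicit spec descent coro}. Applying that corollary, we conclude that for every $0 \leq k \leq n-1$ and every $\tau \in X(k-1)$,
$$\Spec((M')^+_{\tau,0}) \subseteq [-\lambda, \lambda] \cup \lbrace 1 \rbrace.$$
This is precisely the hypothesis of the geometric overlapping theorem in the non-partite case proved above. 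Applying that theorem gives
$$\overlap(X) \geq n! \,\mathcal{P}_n \left( \left( \frac{\mathcal{P}_n}{n+1} \right)^n - (n+1) C_n \lambda \right),$$
with $C_n = \sum_{k=0}^{n-1} ((k+1)(k+2)^{n-k} - (k+1)^{n-k+1})$, as claimed.
\end{proof}
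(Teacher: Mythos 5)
Your proof is correct and follows exactly the route the paper intends: the paper states the corollary as an immediate consequence of combining Corollary \ref{Explicit spec descent coro} (spectral descent, ``Moreover'' part, since your hypothesis supplies the two-sided bound $\nu_{n-1} \geq -\tfrac{\lambda}{1+(n-1)\lambda}$ and $\mu_{n-1} \leq \tfrac{\lambda}{1+(n-1)\lambda}$) with the non-partite geometric overlapping theorem, and your write-up simply makes that composition explicit. The bookkeeping you flag — that the spectral descent outputs the clean interval $[-\lambda,\lambda]\cup\{1\}$ for all links so $C_n\lambda$ appears undegraded — is the right thing to check, and it checks out.
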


\begin{corollary}
Let $X$ be a pure $n$-dimensional simplicial complex with the homogeneous weight function. Assume that $X$ is $(n+1)$-partite and partite-regular in the following sense: for every $0 \leq i \leq n$ there is a constant $K_i$ such that for every $\lbrace v \rbrace \in S_i$, $v$ is contained in exactly $K_i$ $n$-dimensional simplices of $X$, i.e., $m(\lbrace v \rbrace) = n! K_i$. Assume there is a constant $0< \lambda < 1$ such that every $\tau \in X(n-2)$, $\Spec ((M')_{\tau,0}^+) \subseteq [-1,  \frac{\lambda}{1+(n-1)\lambda}] \cup \lbrace 1 \rbrace$, then
$$\overlap (X) \geq \mathcal{P}_n (\mathcal{P}_n^n-C_n \lambda),$$
where $\mathcal{P}_n$ is the constant of Pach's theorem and
$$C_n = \sum_{k=0}^{n-1} n! \dfrac{(n+1-k)^{n-k}}{(n-k-1)!}((k+1)(k+2)^{n-k} - (k+1)^{n-k+1} ).$$
\end{corollary}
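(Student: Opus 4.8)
The plan is to derive this corollary directly from the partite geometric overlapping theorem proven above together with the spectral descent result recorded as Corollary~\ref{Explicit spec descent coro} in the appendix, exactly in the same way that Corollary~\ref{Mixing + spec descent coro - partite} is obtained from Corollary~\ref{mixing for partite regular coro}. No computation beyond a single algebraic identity is required.

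First I would translate the hypothesis on the one-dimensional links into the hypothesis needed by the overlapping theorem. Write $\mu = \frac{\lambda}{1+(n-1)\lambda}$, so that the assumption reads $\Spec((M')^+_{\tau,0}) \subseteq [-1,\mu]\cup\{1\}$ for every $\tau \in X(n-2)$---a one-sided bound on the non-lazy upper walk of every one-dimensional link. Corollary~\ref{Explicit spec descent coro}, which is the random-walk reformulation of the ``trickling down'' theorem of \cite{LocSpecI}, propagates such a one-sided bound to all lower-dimensional links: for every $-1 \leq j \leq n-2$ and every $\tau \in X(j)$ (whose link $X_\tau$ is $(n-j-1)$-dimensional) one gets $\Spec((M')^+_{\tau,0}) \subseteq \bigl[-1,\ \mu/(1-(n-j-2)\mu)\bigr]\cup\{1\}$, the worst case being $j=-1$. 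The identity to check is
$$\frac{\mu}{1-(n-1)\mu} \;=\; \frac{\tfrac{\lambda}{1+(n-1)\lambda}}{\,1-(n-1)\tfrac{\lambda}{1+(n-1)\lambda}\,} \;=\; \lambda ,$$
so that, re-indexing $j=k-1$, we obtain $\Spec((M')^+_{\tau,0}) \subseteq [-1,\lambda]\cup\{1\}$ for all $0 \leq k \leq n-1$ and all $\tau \in X(k-1)$, which is precisely the hypothesis of the partite geometric overlapping theorem.

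I would then apply that theorem verbatim; it yields $\overlap(X) \geq \mathcal{P}_n(\mathcal{P}_n^n - C_n\lambda)$ with exactly the constant $C_n$ stated in the corollary, completing the argument.

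There is no real obstacle here; the only point demanding attention is that the descent must be used in its one-sided form. In a partite complex the bottom of the spectrum of a one-dimensional link can equal $-1$, so it is essential that Corollary~\ref{Explicit spec descent coro} transports only the upper end of the spectrum and is otherwise indifferent to the presence of $-1$---which is exactly why the appendix recasts the \cite{LocSpecI} results in terms of the non-lazy upper walks $(M')^+_{\tau,0}$ rather than Laplacians. Verifying this compatibility and the displayed identity is all that is needed; the substance of the argument sits in the two already-available results.
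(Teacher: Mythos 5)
Your proposal is correct and matches exactly what the paper intends: the corollary is obtained by feeding the one-sided bound on $X(n-2)$ into Corollary~\ref{Explicit spec descent coro} to get $\Spec((M')^+_{\tau,0}) \subseteq [-1,\lambda]\cup\{1\}$ for all $\tau \in \bigcup_{k=-1}^{n-2}X(k)$, and then invoking the partite geometric overlapping theorem verbatim. The paper does not spell this out (it simply says ``combining the above Theorems with the spectral descent result''), so your write-up correctly fills in exactly the intended steps, including the observation that only the one-sided form of the descent is used.
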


We note that the partite case proves that quotients of Affine buildings of large enough thickness have geometric overlapping and along as the quotient preserves the $1$-dimensional links of the builidng, thus generalizing the result of Fox, Gromov, Lafforgue, Naor and Pach \cite{FGLNP} who showed geometric overlapping for quotients of $\widetilde{A}$-buildings. Also, the new examples of high dimensional expanders constructed by Kaufman and the author \cite{KO-construction} fulfill the assumption of the partite mixing (and the spectral gaps can be chosen to be arbitrarily small) and therefore provide new examples of complexes with the geometric overlapping property. 

\appendix

\section{Spectral descent for random walks}

In \cite{LocSpecI}, we proved spectral descent result - we showed that given a simplicial complex with connected links, one can derive bounds on the spectra of the Laplacians in links of every dimension, based on bounds on the spectrum of the Laplacians in the $1$-dimensional links. In the paper above, we replaced the Laplacians with Random walk operators. 
 
In this appendix, we state the spectral descent results of \cite{LocSpecI} in the language of random walks. These results are easy to derive, because the graph Laplacian $\Delta_0^+$ is basically defined by $\Delta_0^+ = I-(M')_0^+$, and therefore we do not include the proof. We also use this result to find a sufficient condition on the spectral gaps of $1$-dimensional links, such that the conditions of our mixing theorems are fulfilled.  

\begin{theorem}[Spectral descent for random walks]
\label{spec descent thm}
Let $X$ be a weighted pure $n$-dimensional simplicial complex, such that all the links of $X$ of dimension $\geq 1$ (including $X$ itself) are connected. For $0 \leq k \leq n-1$, let $0 \leq \mu_k \leq 1,-1 \leq \nu_k \leq 0$ be constants such that for every $\sigma \in X(k-1)$, $\Spec ((M_\sigma')_0^{+}) \subseteq [\nu_k, \mu_k] \cup \lbrace 1 \rbrace$.  Then for every $0 \leq k \leq n-2$, 
$$\mu_k \leq \dfrac{\mu_{k+1}}{1-\mu_{k+1}},$$
$$\nu_k \geq \dfrac{\nu_{k+1}}{1-\nu_{k+1}}.$$
\end{theorem}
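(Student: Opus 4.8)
The plan is to recognize this statement as the random-walk reformulation of the spectral-descent theorem of \cite{LocSpecI}, which there is phrased in terms of the normalized Laplacians of the one-skeleta of links, and then to reduce to it by a change of variables. The bridge is the observation recorded in Remark \ref{M^-_0, M'^+_0 remark}: for every simplex $\sigma$, the normalized Laplacian of the one-skeleton of $X_\sigma$ is $\Delta^+_{\sigma,0} = I - (M'_\sigma)^+_0$. Consequently the hypothesis $\Spec((M'_\sigma)^+_0) \subseteq [\nu_k,\mu_k]\cup\{1\}$ for every $\sigma \in X(k-1)$ is exactly the statement that the nontrivial part of $\Spec(\Delta^+_{\sigma,0})$ lies in $[1-\mu_k,\,1-\nu_k]$, and the asserted inequalities $\mu_k \le \mu_{k+1}/(1-\mu_{k+1})$ and $\nu_k \ge \nu_{k+1}/(1-\nu_{k+1})$ become, under the substitution $x\mapsto 1-x$, the descent inequalities for the spectra of the $\Delta^+_{\sigma,0}$'s proved in \cite{LocSpecI}. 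So the first (and essentially only) step I would take is to perform this dictionary translation; the single thing worth checking explicitly is that the M\"obius map $x\mapsto x/(1-x)$ is the conjugate under $x\mapsto 1-x$ of the Laplacian recursion — this is precisely why the conclusion is not merely the trivial bound $\mu_k\le\mu_{k+1}$.

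For a self-contained argument one would instead run Garland's method inside a fixed link. Fix $\sigma \in X(k-1)$ and work in the pure $(n-k)$-dimensional complex $X_\sigma$, which has connected links and, since $k\le n-2$, satisfies $n-k\ge 2$; its vertex-links are the complexes $X_{\sigma\cup\{v\}}$ with $\{v\}\in X_\sigma(0)$, and $\sigma\cup\{v\}\in X(k)$, so their one-skeleton walks satisfy the $[\nu_{k+1},\mu_{k+1}]\cup\{1\}$ bound by hypothesis. Applying Proposition \ref{M^+ - M^- proposition} inside $X_\sigma$ at cochain level $1$ expresses, for $\phi\in C^1(X_\sigma,\mathbb{R})$, the quantity $\langle (d_1^*d_1 - d_0 d_0^*)\phi,\phi\rangle$ as $\|\phi\|^2$ plus a sum over $\{v\}$ of the terms $\langle (M')^+_{\sigma\cup\{v\},0}(I-M^-_{\sigma\cup\{v\},0})\phi_{\{v\}},\phi_{\{v\}}\rangle$, each of which is pinched between $\nu_{k+1}\|\phi_{\{v\}}\|^2$ and $\mu_{k+1}\|\phi_{\{v\}}\|^2$ since $(M')^+(I-M^-)$ is self-adjoint with spectrum in $[\nu_{k+1},\mu_{k+1}]$. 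One then combines this with Lemma \ref{connection between d*d and M lemma} (so that $d_0^*d_0 = 2M^+_{\sigma,0}$ on $C^0(X_\sigma)$ and $d_0 d_0^* = 2M^-_{\sigma,1}$ on $C^1(X_\sigma)$) and with the fact that $d_0$ carries an eigenfunction of $M^+_{\sigma,0}$ with nontrivial eigenvalue $\beta$ to an eigenfunction of the edge-walk $M^-_{\sigma,1}$ with the same eigenvalue, thereby translating a spectral estimate for the edge-walk into one for $(M'_\sigma)^+_0 = 2M^+_{\sigma,0} - I$.

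The place I expect to be the real obstacle is extracting the \emph{sharp} constant $\mu_{k+1}/(1-\mu_{k+1})$ rather than some weaker bound: plugging the crude estimates $d_1^*d_1\ge 0$ or $d_1^*d_1\le 3I$ into the identity above yields only a vacuous inequality for $\beta$. The correct computation (carried out for $\Delta^+$ in \cite{LocSpecI}) is to take an eigenfunction $f\in C^0(X_\sigma)$ of $(M'_\sigma)^+_0$ with nontrivial eigenvalue $\beta$, test the Garland identity against $\phi = d_0 f$, use $d_0^*d_0 f = (1+\beta)f$ to rewrite both $\|\phi\|^2 = (1+\beta)\|f\|^2$ and $\langle d_0 d_0^*\phi,\phi\rangle = (1+\beta)\|\phi\|^2$, and then exploit a further structural identity for $\langle d_1^*d_1\phi,\phi\rangle = \|d_1 d_0 f\|^2$ that keeps the dependence on $\beta$ with the correct sign; solving the resulting scalar inequality gives $\beta\le \mu_{k+1}/(1-\mu_{k+1})$, and the mirror computation gives $\beta\ge \nu_{k+1}/(1-\nu_{k+1})$. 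Since \cite{LocSpecI} already performs exactly this, my recommendation coincides with the paper's: do the change of variables and invoke the earlier work, retaining the Garland reduction above only as a check on the shape of the recursion.
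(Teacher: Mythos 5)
Your primary recommendation — translate $\Spec((M'_\sigma)_0^+)$ into $\Spec(\Delta^+_{\sigma,0})$ via $\Delta_0^+ = I - (M')_0^+$ and invoke the spectral-descent theorem of \cite{LocSpecI} — is exactly what the paper does; indeed the appendix explicitly declines to give a proof on precisely this basis. Your supplementary Garland sketch is a plausible outline of the underlying argument in \cite{LocSpecI}, but the ``further structural identity'' for $\Vert d_1 d_0 f\Vert^2$ is left unspecified and would need to be filled in to make that route self-contained; since you ultimately recommend citing \cite{LocSpecI}, this is moot.
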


A simple induction leads to the following:
\begin{corollary}
\label{spec gap descent induction coro}
Let $X$ be a weighted pure $n$-dimensional simplicial complex, such that all the links of $X$ of dimension $\geq 1$ (including $X$ itself) are connected, then for every $0 \leq k \leq n-2$,
$$\mu_k \leq \dfrac{\mu_{n-1}}{1-(n-1-k)\mu_{n-1}},$$
$$\nu_k \geq \dfrac{\nu_{n-1}}{1-(n-1-k)\nu_{n-1}}.$$
\end{corollary}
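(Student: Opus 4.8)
The plan is to run a downward induction on $k$ (from $k=n-1$ down to $k=0$), chaining together the one-step estimates supplied by Theorem \ref{spec descent thm}. The single analytic fact needed is that $g(t) := \frac{t}{1-t}$ is strictly increasing on $(-\infty,1)$, since $g'(t)=(1-t)^{-2}>0$; in particular $g$ maps $(-1,0]$ into itself and $[0,1)$ into $[0,\infty)$.

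For the upper bounds I would take as base case the trivial identity $\mu_{n-1}\le\frac{\mu_{n-1}}{1-0\cdot\mu_{n-1}}$, and for the inductive step assume $\mu_{k+1}\le\frac{\mu_{n-1}}{1-(n-2-k)\mu_{n-1}}$ with the right-hand side $<1$. Theorem \ref{spec descent thm} gives $\mu_k\le g(\mu_{k+1})$, and monotonicity of $g$ then yields $\mu_k \le g\!\big(\tfrac{\mu_{n-1}}{1-(n-2-k)\mu_{n-1}}\big)$; the one-line simplification $g\big(\tfrac{b}{1-jb}\big)=\tfrac{b}{1-(j+1)b}$, applied with $b=\mu_{n-1}$ and $j=n-2-k$, produces exactly $\frac{\mu_{n-1}}{1-(n-1-k)\mu_{n-1}}$, closing the induction (and the new bound is again $<1$ iff $(n-1-k)\mu_{n-1}<1$). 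The lower bounds for $\nu_k$ follow by the identical argument with all inequalities reversed and $g$ restricted to $(-1,0]$; there every denominator $1-(n-1-k)\nu_{n-1}$ is automatically $\ge 1$, so no side condition arises.

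The only delicate point — and the closest thing to an obstacle — is the domain bookkeeping in the $\mu$-case: the chain of estimates is legitimate only while each denominator $1-(n-1-k)\mu_{n-1}$ that appears stays positive, i.e. while $\mu_{n-1}<\tfrac{1}{n-1-k}$; if this fails the asserted inequality is read as vacuous (the left side being $\le 1$ in any event, since $(M_\sigma')_0^+$ is an averaging operator). In the small-$\mu_{n-1}$ regime in which Corollary \ref{spec gap descent induction coro} is actually applied, this condition holds for all $0\le k\le n-2$, so the iteration proceeds without obstruction, and beyond this routine check I do not anticipate any genuine difficulty.
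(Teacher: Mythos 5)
Your proof is correct and is precisely the ``simple induction'' the paper appeals to (and does not write out): iterate the one-step bound from Theorem \ref{spec descent thm} using the monotonicity of $t \mapsto t/(1-t)$ and the identity $g(b/(1-jb)) = b/(1-(j+1)b)$. The domain caveat you flag for the $\mu$-case is the right thing to note, and your observation that the $\nu$-case needs no such caveat (denominators automatically $\geq 1$) is also correct.
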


A corollary of the above corollary is the following:
\begin{corollary}
\label{Explicit spec descent coro}
Let $X$ be a weighted pure $n$-dimensional simplicial complex, such that all the links of $X$ of dimension $\geq 1$ (including $X$ itself) are connected, then and let $0 < \lambda \leq 1$ be some constant. If $\mu_{n-1} \leq \frac{\lambda}{1+(n-1)\lambda}$, then for every $0 \leq k \leq n-1$, $\mu_k \leq \lambda$, i.e., for every $\sigma \in \bigcup_{k=-1}^{n-2} X(k)$, $\Spec ((M_\sigma')_0^{+}) \subseteq [-1, \lambda] \cup \lbrace 1 \rbrace$.

Moreover, if $\mu_{n-1} \leq \frac{\lambda}{1+(n-1)\lambda}$ and $\frac{-\lambda}{1+(n-1)\lambda} \leq \nu_{n-1}$, then for every $0 \leq k \leq n-1$,  $\mu_k \leq \lambda$ and $-\lambda \leq \nu_k$, i.e.,
for every $\sigma \in \bigcup_{k=-1}^{n-2} X(k)$, $\Spec ((M_\sigma')_0^{+}) \subseteq [-\lambda, \lambda] \cup \lbrace 1 \rbrace$.
\end{corollary}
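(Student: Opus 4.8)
The plan is to deduce the corollary entirely from Corollary~\ref{spec gap descent induction coro}, which bounds each $\mu_k$ above (resp.\ $\nu_k$ below) by the value of the rational map $g_m(t)=\dfrac{t}{1-mt}$ at $\mu_{n-1}$ (resp.\ $\nu_{n-1}$), with $m=n-1-k$. The one elementary fact I would record first is that, for a fixed integer $m\ge 0$, $g_m$ is monotone increasing on each interval where $1-mt>0$, since $g_m'(t)=(1-mt)^{-2}>0$ there; in particular $g_m$ is increasing on $[0,1/m)$ and on $(-\infty,0]$.

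For the first assertion, fix $0\le k\le n-1$ and put $m=n-1-k$ and $t_0=\dfrac{\lambda}{1+(n-1)\lambda}$. When $k=n-1$ there is nothing to prove: $\mu_{n-1}\le t_0\le\lambda$ because $1+(n-1)\lambda\ge 1$. When $k\le n-2$, I would first check that $m t_0=\dfrac{m\lambda}{1+(n-1)\lambda}<1$, equivalently $1+(n-1)\lambda-m\lambda=1+k\lambda>0$, so that $m\mu_{n-1}\le m t_0<1$ and $g_m$ is defined and increasing at both $\mu_{n-1}$ and $t_0$. Corollary~\ref{spec gap descent induction coro} together with this monotonicity then gives
$$\mu_k\le g_m(\mu_{n-1})\le g_m(t_0)=\frac{\lambda}{1+(n-1)\lambda-m\lambda}=\frac{\lambda}{1+k\lambda}\le\lambda.$$
Since the standing hypothesis already gives $\nu_k\ge -1$, this yields $\Spec((M_\sigma')_0^{+})\subseteq[-1,\lambda]\cup\{1\}$ for all $\sigma\in\bigcup_{k=-1}^{n-2}X(k)$.

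For the moreover part I would argue symmetrically, now using that for $t\le 0$ one has $1-mt\ge 1>0$, so $g_m$ is increasing on $(-\infty,0]$, an interval containing both $\nu_{n-1}$ and $t_1=\dfrac{-\lambda}{1+(n-1)\lambda}$. The case $k=n-1$ is immediate from $\nu_{n-1}\ge t_1\ge-\lambda$. For $k\le n-2$, Corollary~\ref{spec gap descent induction coro} and monotonicity give
$$\nu_k\ge g_m(\nu_{n-1})\ge g_m(t_1)=\frac{-\lambda}{1+(n-1)\lambda+m\lambda}\ge-\lambda,$$
the last step because $1+(n-1+m)\lambda\ge 1$. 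Combined with $\mu_k\le\lambda$ from the first part, this gives $\Spec((M_\sigma')_0^{+})\subseteq[-\lambda,\lambda]\cup\{1\}$ for all such $\sigma$.

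The only point that needs genuine care — and the place I would be most vigilant — is the sign bookkeeping: one must verify that the denominators $1-(n-1-k)\mu_{n-1}$ and $1-(n-1-k)\nu_{n-1}$ stay strictly positive, so that the fractions produced by Corollary~\ref{spec gap descent induction coro} are honest upper/lower bounds and the comparison with $g_m(t_0)$, $g_m(t_1)$ is a legitimate monotonicity step rather than an artifact of crossing a pole of $g_m$. This reduces to the inequality $1+k\lambda>0$ displayed above; everything else is routine substitution.
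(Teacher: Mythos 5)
Your proof is correct and follows essentially the same route as the paper's: both apply Corollary~\ref{spec gap descent induction coro} and then exploit the monotonicity of $t\mapsto t/(1-(n-1-k)t)$ to replace $\mu_{n-1}$ (resp.\ $\nu_{n-1}$) by the threshold value $\lambda/(1+(n-1)\lambda)$ (resp.\ its negative) and simplify. You spell out the positivity of the denominator and the monotonicity more explicitly than the paper does, but the underlying argument is identical.
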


\begin{proof}
By the above corollary, if $\mu_{n-1} \leq \frac{\lambda}{1+(n-1)\lambda}$ then for every $0 \leq k \leq n-2$ we have that
\begin{dmath*}
\mu_k \leq \dfrac{\mu_{n-1}}{1-(n-1-k)\mu_{n-1}} \leq \dfrac{\frac{\lambda}{1+(n-1)\lambda}}{1-(n-1-k)\frac{\lambda}{1+(n-1)\lambda}} \leq \dfrac{\frac{\lambda}{1+(n-1)\lambda}}{1-(n-1)\frac{\lambda}{1+(n-1)\lambda}} = \lambda,
\end{dmath*}
The proof of the second assertion is similar.
\end{proof}

\bibliographystyle{alpha}
\bibliography{bibl}
\end{document}